\begin{document}
 \def\pd#1#2{\frac{\partial#1}{\partial#2}}
\def\dfrac{\displaystyle\frac}
\let\oldsection\section
\renewcommand\section{\setcounter{equation}{0}\oldsection}
\renewcommand\thesection{\arabic{section}}
\renewcommand\theequation{\thesection.\arabic{equation}}

\newtheorem{thm}{Theorem}[section]
\newtheorem{cor}[thm]{Corollary}
\newtheorem{lem}[thm]{Lemma}
\newtheorem{prop}[thm]{Proposition}
\newtheorem*{con}{Conjucture}
\newtheorem*{questionA}{Question}
\newtheorem*{thmB}{Theorem B}
\newtheorem{remark}{Remark}[section]
\newtheorem{definition}{Definition}[section]

\title{Classification of global dynamics of competition models with  nonlocal dispersals I: Symmetric kernels
\thanks{The first author is supported by Chinese NSF (No. 11501207).
The second author is  supported by  NSF  of China (No. 11431005), NSF of Shanghai  (No. 16ZR1409600).} }

\author{Xueli Bai{\thanks{E-mail: mybxl110@163.com}}\\ {Department of Applied Mathematics, Northwestern Polytechnical
University,}\\{\small 127 Youyi Road(West), Beilin 710072,
Xi'an, P. R. China.}\\ Fang Li{\thanks{Corresponding author.
E-mail: fli@cpde.ecnu.edu.cn}}\\ {Center for PDE, East China Normal
University,}\\{\small 500 Dongchuan Road, Minhang 200241,
Shanghai, P. R. China.} }

\date{}
\maketitle{}

\begin{abstract}
In this paper,  we gives a complete classification of  the global dynamics of two-species Lotka-Volterra competition models with nonlocal dispersals:
\begin{equation*}
\begin{cases}
u_t= d \mathcal{K}[u]  +u(m(x)- u- c v) &\textrm{in } \Omega\times[0,\infty),\\
v_t= D \mathcal{P}[v]  +v(M(x)-b u-  v) &\textrm{in } \Omega\times[0,\infty),\\
u(x,0)=u_0(x),~v(x,0)=v_0(x)  &\textrm{in } \Omega,
\end{cases}
\end{equation*}
where $\mathcal{K}$, $\mathcal{P}$ represent nonlocal operators, under the assumptions that the nonlocal operators  are symmetric, the models admit  two semi-trivial steady states and $0<bc\leq 1$. In particular,  when both semi-trivial steady states are locally stable, it is proved that there exist infinitely many steady states and the solution with nonnegative and nontrivial initial data converges to  some steady state in $C(\bar\Omega)\times C(\bar\Omega)$.  Furthermore, we generalize these results  to the case that competition coefficients are location-dependent and dispersal strategies are mixture of local and nonlocal dispersals.
\end{abstract}

{\bf Keywords}: nonlocal dispersal, local stability, global convergence
\vskip3mm {\bf MSC (2010)}: Primary: 35R09, 35K57, 92D25, 35B40.


\section{Introduction}
Dispersal is an important feature of  life histories of many organisms and thus has been a central topic in ecology.
In 1951, random diffusion was introduced to model dispersal strategies \cite{Skellam} and there are tremendous studies in this direction, see the books \cite{CC-book, OL-book}.
Though random dispersal is widely used in models from biology, it is clearly oversimplified for describing the movement of many organisms. Moreover, as a local behavior, random dispersal  essentially  describes the movements of
organisms between adjacent spatial locations. However, the possibility of a long range dispersal is well known in ecology \cite{Cain,Clark1,Clark2,schurr}, typical instances including birds fly, propagation of seeds and pollens etc.  Evoked by this, mathematicians introduce a new diffusion mode different from the random diffusion--nonlocal dispersal.  A commonly used form that integrates such long range dispersal is the following nonlocal diffusion operator \cite{Bere-C-V, Fife,Grinfeld,HMMV,Lutscher,Mogilner-E,Turchin}:
 \begin{equation*}\label{nonlocal operator}
   \mathcal{L}u :=\int_\Omega k(x,y)u(y)dy-a(x)u(x).
 \end{equation*}

It is also worth mentioning that  the nonlocal operators have been used
to model many other applied situations beyond ecology, for example in image processing \cite{Gilboa,Kindermann}, particle systems \cite{Bodnar}, coagulation models \cite{Fournier},
nonlocal anisotropic models for phase transition \cite{Alberti1,Alberti2}, mathematical finances
using optimal control theory \cite{Biswas,Jakobsen} etc. We refer the book \cite{Andreu-J-R-T} and references therein for more details.

The purpose of this paper is to understand the role played by spatial heterogeneity and nonlocal dispersals in the ecology of competing species by classifying the global dynamics of the following   model
\begin{equation}\label{original}
\begin{cases}
u_t= d \mathcal{K}[u]  +u(m(x)- u- c v) &\textrm{in } \Omega\times[0,\infty),\\
v_t= D \mathcal{P}[v]  +v(M(x)-b u-  v) &\textrm{in } \Omega\times[0,\infty),\\
u(x,0)=u_0(x),~v(x,0)=v_0(x)  &\textrm{in } \Omega,
\end{cases}
\end{equation}
where $\Omega$ is a smooth bounded domain in $\mathbb R^n$, $n\geq 1$ and $\mathcal{K}$, $\mathcal{P}$ represent nonlocal operators.
In this model, $u(x,t)$, $v(x,t)$ are the population densities of two competing species, $d, D>0$ are their dispersal rates respectively. The functions $m(x)$, $M(x)$ represent their intrinsic growth rates,
$b, c>0$ in $\bar\Omega$ are interspecific competition coefficients.

\subsection{Background and motivations}

The  model (\ref{original}) is a Lotka-Volterra type model which can be traced back to the works of Lotka and Volterra \cite{Lotka,Volterra}. Such models are widely used to describe the dynamics of biological systems in which two species interact, where predator-prey and competition are two typical situations, and play an important role in mathematical biology. To avoid being too lengthy,  we restrict our discussions to  models related to the model (\ref{original}) only.

Let us begin with the  the   simple Lotka-Volterra ODE model (which can be considered as a special case of (\ref{original}): $d=D=0$ and $M=m,u_0,v_0$ are positive constants)
\begin{equation}\label{ode}
\begin{cases}
u^\prime (t)= u(m- u- c v) &\textrm{in } [0,\infty),\\
v^\prime (t)= v(m-b u-  v) &\textrm{in } [0,\infty),\\
u(0)=u_0,~v(0)=v_0.
\end{cases}
\end{equation}
The following results about the global dynamics of (\ref{ode}) are well known:
\begin{itemize}
  \item[(i)] If $b,c< 1$ then $(\frac{1-c}{1-bc}m, \frac{1-b}{1-bc}m)$ is the global attractor;
  \item[(ii)] If $b\le1\le c$ (or $c\le 1\le b$) and $(b-1)^2+(c-1)^2\neq 0$, then $(0,m)$ (or $(m,0)$) is the global attractor;
  \item[(iii)] If $b=c=1$, for any initial data $(u_0,v_0)$, there exists $s\in [0,1]$ such that the solution of (\ref{ode}) converge to $(sm,(1-s)m)$;
  \item[(iv)] If $b,c>1$, the solution $(u,v)$ will converge to $(m,0)$/$(0,m)$/$(\frac{1-c}{1-bc}m, \frac{1-b}{1-bc}m)$ under the condition $(v_0<\frac{1-b}{1-c}u_0)$/$(v_0>\frac{1-b}{1-c}u_0)$/$(v_0=\frac{1-b}{1-c}u_0)$ respectively.
\end{itemize}

Considering the importance of dispersal strategies for species, natually, the next step is to take the diffusion of the species into consideration. If each   individual moves randomly, it leads to the following model
\begin{equation}\label{pde1}
\begin{cases}
u_t= d \Delta u   +u(m- u- c v) &\textrm{in } \Omega\times[0,\infty),\\
v_t= D \Delta v  +v(m-b u-  v) &\textrm{in } \Omega\times[0,\infty),\\
 \frac{\partial v}{\partial \gamma}= \frac{\partial v}{\partial \gamma}=0 &\textrm{on } \partial\Omega\times[0,\infty),\\
u(x,0)=u_0(x),~v(x,0)=v_0(x)  &\textrm{in } \Omega,
\end{cases}
\end{equation}
where $\gamma$ denotes the unit outer normal vector on $\partial\Omega$.
It turns out that for the first three cases, systems (\ref{ode}) and (\ref{pde1}) share lots of similarity, while the case (iv) is more delicate.
More specifically, in the cases of (i), (ii) and (iii), the globally stable equilibrium of (\ref{ode}) given  above is also globally stable as a solution of (\ref{pde1})  \cite{Ahmad,DeMottoni}. In other words, the global dynamics of the PDE model (\ref{pde1}) is independent of the initial distributions of the two species.
However,  for the case (iv),  some different and interesting phenomena happen due to the interaction between random diffusion and shape of  habitat. If $\Omega$ is convex, except for $(m,0)$ and $(0,m)$, there are no stable equilibria   \cite{Kishimoto-W}.
But, if $\Omega$ is not convex, the system (\ref{pde1}) may have a stable spatially inhomogeneous equilibrium that corresponds to the habitat segregation phenomenon  \cite{Matano-M, Mimura-E-F, Kan-on-Y}.


Later, to understand the effect of migration and spatial heterogeneity of resources,
the global dynamics of the following model
\begin{equation}\label{pde2}
\begin{cases}
u_t= d \Delta u+ u(m(x)- u- c v) &\textrm{in } \Omega\times[0,\infty),\\
v_t= D \Delta v  +v(m(x)-b u-  v) &\textrm{in } \Omega\times[0,\infty),\\
 \frac{\partial v}{ \partial\gamma}= \frac{\partial v}{ \partial\gamma}=0 &\textrm{on } \partial\Omega\times[0,\infty),\\
u(x,0)=u_0(x),~v(x,0)=v_0(x)  &\textrm{in } \Omega,
\end{cases}
\end{equation}
where $m(x)$ is nonconstant, has received extensive studies in the last two decades.  See  \cite{CC98,  HeNi, LamNi, LWW11, Lou}
and the references therein. For $0<b,c<1$, an insightful conjecture was proposed and partially verified  in \cite{Lou}:
\begin{con}
The locally stable steady state  is globally asymptotically stable.
\end{con}
Recently, this conjecture  has been completely resolved in \cite{HeNi}  provided that $0<bc\leq 1$.  Indeed,  the appearance of spatial heterogeneity greatly increase the complexity of the global dynamics of the system (\ref{pde2}). For example, when $0<b,c<1$, both coexistence and extinction phenomena happen in (\ref{pde2}) depending on the choice of competition coefficients $b,c$ and diffusion coefficients $d,D$. According to previous discussions, this is dramatically different from both the ODE system (\ref{ode}) and the PDE system (\ref{pde1}), where the distribution of resources is assumed to be constant.  Another observation is also worth mentioning. If in addition, set $d=D=0$, then (\ref{pde2}) becomes a system of two ordinary differential
equations, whose solutions converge to
$$
\left( {1-b\over 1-bc}m_+(x), {1-c\over 1-bc}m_+(x)\right)\ \ \textrm{for every}\ x\in\Omega,
$$
where $m_+(x)=\max \{ m(x), 0\}$, among all positive
continuous initial data. Thus, the introduction of migration is also crucial.
Moreover, when $bc>1$, except for very special situations mentioned in \cite{HeNi}, the global dynamics of the system (\ref{pde2}) is far from being understood. In particular, to the best of our knowledge, there is no progress for the case (iv), i.e. $b,c>1$.

Based on the importance of nonlocal dispersals, it is natural to consider
the system (\ref{pde2}) with random diffusion replaced by nonlocal versions.
Till now the studies for the corresponding nonlocal models are quite limited. See \cite{BaiLi2015, HNShen2012, LLW14} and the references therein. This paper continues the studies in \cite{BaiLi2015, LLW14}, where a type of simplified nonlocal operator is considered.

\subsection{Main results: nonlocal dispersal strategies}
In this paper,   denote $\mathbb X= C(\bar\Omega)$,  $\mathbb  X_+ =\{ u\in \mathbb X \ | \ u\geq 0\}$ and $\mathbb  X_{++}=\mathbb  X_+\setminus \{  0\}$.
For clarity, in the statements of   main results, we focus on the   nonlocal operators $\mathcal{K}$ and $\mathcal{P}$ with no flux boundary condition. To be more specific, for $\phi\in \mathbb X$, define

\noindent\textbf{(N)} $\displaystyle \mathcal{K} [\phi] = \int_{\Omega}k(x,y)\phi(y)dy- \int_{\Omega}k(y,x) dy \phi(x), \mathcal{P} [\phi] = \int_{\Omega}p(x,y)\phi(y)dy- \int_{\Omega}p(y,x) dy \phi(x),$
where the kernels $k(x, y)$, $p(x,y)$ describe the rate at which organisms move from point $y$ to point $x$.
Nonlocal operators in hostile  surroundings  or   periodic  environments will be discussed at the  last section of this paper. See \cite{HMMV} for the derivation of  different types of nonlocal operators.

Throughout this paper, unless designated otherwise, we assume that
\begin{itemize}
\item[\textbf{(C1)}]  $ m(x), M(x)\in \mathbb X$  are nonconstant.
\item[\textbf{(C2)}]  $k(x,y)$, $p(x,y)\in C(\mathbb R^n\times \mathbb R^n)$ are nonnegative and  $k(x,x), \  p(x,x)>0$ in $\mathbb R^n$. Moreover,  $\int_{\mathbb R^n} k(x,y)dy= \int_{\mathbb R^n} k(y,x)dy=1$ and $\int_{\mathbb R^n} p(x,y)dy= \int_{\mathbb R^n} p(y,x)dy=1$.
\item[\textbf{(C3)}] $k(x,y)$, $p(x,y)$   are symmetric, i.e., $k(x,y)=k(y,x)$, $p(x,y)=p(y,x)$.
\end{itemize}

To better demonstrate our main results and techniques, some explanations are in place. Let $(U(x),V(x))$  denote a nonnegative steady state of (\ref{original}),  then   there are {\it at most} three possibilities:
\begin{itemize}
\item $(U,V)= (0,0)$ is called a {\it trivial steady state};
\item $(U,V)=(u_d, 0)$ or $(U,V)=(0, v_D)$ is called a {\it semi-trivial steady state}, where $u_d$, $v_D$ are the positive solutions to single-species models
    \begin{equation}\label{singled}
    d \mathcal{K}[U]  +U(m(x)- U)=0,
    \end{equation}
    and
    \begin{equation}\label{singleD}
     D \mathcal{P}[V]  +V(M(x)- V)=0
    \end{equation}
    respectively.
\item $U>0,\ V>0$, and we call $(U,V)$ a {\it coexistence/positive steady state}.
\end{itemize}

{\it The first main result} in this paper  gives a complete classification of  the global dynamics to the competition system (\ref{original})  provided that at least one semi-trivial steady state is locally unstable.
\begin{thm}\label{thm-main}
Assume that \textbf{(C1)}-\textbf{(C3)} hold and $0<bc\leq 1$. Also assume that (\ref{original}) admits two semi-trivial steady states $(u_d, 0)$ and $(0,v_D)$. Then for the global dynamics of the system (\ref{original}) with nonlocal operators defined in \textbf{(N)}, we have the following statements:
\begin{itemize}
\item[(i)] If both $(u_d, 0)$ and $(0,v_D)$ are locally unstable, then the system (\ref{original}) admits a unique positive steady state, which is globally asymptotically stable relative to $\mathbb X_{++} \times \mathbb X_{++}$;
\item[(ii)] If $(u_d, 0)$ is locally unstable and $(0,v_D)$ is locally stable or neutrally stable, then $(0,v_D)$ is globally asymptotically stable relative to $\mathbb X_{++} \times \mathbb X_{++}$;
\item[(iii)] If $(u_d, 0)$ is locally stable or neutrally stable and $(0,v_D)$ is locally unstable, then $(u_d, 0)$ is globally asymptotically stable relative to $\mathbb X_{++} \times \mathbb X_{++}$.
\end{itemize}
\end{thm}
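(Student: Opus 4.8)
The plan is to regard (\ref{original}) as a monotone dynamical system in the competitive order and to reduce the global dynamics to the structure of its steady states, the decisive input being a uniqueness theorem for the positive steady state that exploits the symmetry \textbf{(C3)} together with $0<bc\le1$.

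\textbf{Step 1 (Monotone structure and dissipativity).} I would first show that the semiflow $\Phi_t$ generated by (\ref{original}) is order-preserving for the competitive order $(u_1,v_1)\preceq(u_2,v_2)\Leftrightarrow u_1\le u_2,\ v_1\ge v_2$; this follows from the comparison principle for the scalar nonlocal equations valid under \textbf{(C2)}, and the positivity $k(x,x),p(x,x)>0$ upgrades order-preservation to \emph{strong} monotonicity on $\mathbb X_{++}\times\mathbb X_{++}$. Comparing each component with the single-species problems (\ref{singled})--(\ref{singleD}) gives $\limsup_{t\to\infty}u(\cdot,t)\le u_d$ and $\limsup_{t\to\infty}v(\cdot,t)\le v_D$, so every nontrivial orbit is eventually trapped in the order interval $[(0,v_D),(u_d,0)]$, which is thus globally attracting.

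\textbf{Step 2 (Stability via principal eigenvalues).} I would then translate the linear (in)stability of the semi-trivial states into sign conditions on principal eigenvalues: $(u_d,0)$ is unstable iff $\lambda_p(D\mathcal P+M-bu_d)>0$ (the rare $v$-species can invade) and $(0,v_D)$ is unstable iff $\lambda_p(d\mathcal K+m-cv_D)>0$, with stability and neutral stability given by the non-strict reverse inequalities. Here \textbf{(C3)} renders the operators self-adjoint, so these principal eigenvalues are well defined with positive eigenfunctions and carry a variational characterization, which I use to compare eigenvalues and to control the bifurcation of coexistence states off the semi-trivial branches.

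\textbf{Step 3 (Uniqueness of the positive steady state --- the crux).} The core is to prove that under $0<bc\le1$ there is \emph{at most one} coexistence steady state. I would use monotone iteration to produce a minimal and a maximal positive steady state, $(\underline U,\overline V)$ and $(\overline U,\underline V)$, that bracket every coexistence state in the competitive order, and then show they coincide. Testing the two steady-state systems against the cross combinations and integrating, the symmetry of the kernels yields an integral identity which, combined with the strict sublinearity of $U\mapsto U(m-U-cV)$ in $U$ and the constraint $bc\le1$, should force $(\underline U,\overline V)=(\overline U,\underline V)$. I expect this step --- handling the non-smoothing nonlocal operator inside the integral identity and squeezing out the borderline $bc=1$ --- to be the main obstacle.

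\textbf{Step 4 (Assembling the trichotomy).} Finally I would combine the preceding steps. In case (i), instability of both endpoints detaches orbits from $(u_d,0)$ and $(0,v_D)$, the monotone-systems dichotomy provides a coexistence state, and the uniqueness of Step 3 collapses the minimal and maximal coexistence states to a single point; sandwiching a general orbit between the two monotone-in-time solutions issuing from the perturbed endpoints $(\varepsilon\phi,v_D)$ and $(u_d,\varepsilon\psi)$, which converge to the minimal and maximal coexistence states respectively, then gives global convergence. In cases (ii) and (iii), Step 3 excludes any interior steady state while instability of one endpoint rules it out as a limit, so inside the globally attracting interval the only admissible limit is the stable semi-trivial state, and the same squeezing yields its global asymptotic stability. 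In every case, the monotone (hence, by Dini's theorem, uniform) convergence of each component upgrades order convergence to convergence in $C(\bar\Omega)\times C(\bar\Omega)$.
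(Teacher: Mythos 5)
Your skeleton (monotone dynamics plus a steady-state analysis built on symmetric-kernel integral identities) is the same as the paper's, but two of your load-bearing claims are wrong as stated, and they are exactly where the real work lies. First, Step 3 is false: under \textbf{(C1)}--\textbf{(C3)} and $0<bc\le 1$ alone there is \emph{not} at most one coexistence state. In the degenerate case $bc=1$, $bu_d=v_D$ the system has the continuum $\{(su_d,(1-s)v_D):\,0<s<1\}$ of positive steady states, and this case is fully compatible with the hypotheses of your Step 3. What the paper actually proves (Proposition 3.2) is a dichotomy: two strictly ordered distinct positive steady states exist \emph{if and only if} $bc=1$ and $bu_d=v_D$; uniqueness in case (i) then follows only because the instability of both semi-trivial states excludes this degenerate case via the stability classification (Proposition 3.1, whose proof tests the Rayleigh quotients of the linearized operators with $v_D$ and $u_d$). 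So uniqueness cannot be established as a standalone theorem under $bc\le1$; it must be coupled to the stability hypotheses. Second, in Step 4 your treatment of cases (ii)/(iii) rests on the non sequitur that ``Step 3 excludes any interior steady state'': uniqueness (at most one) does not exclude existence of one, and a single coexistence state could perfectly well coexist with, and steal basin from, the stable semi-trivial state in a competitive system. What is needed, and what the paper supplies, is a genuine \emph{nonexistence} argument: pair a hypothetical positive steady state $(u,v)$ with the stable semi-trivial state $(0,v_D)$ itself (they are ordered, $u>0$, $v<v_D$), run the kernel-symmetry computation on the $v$-component to get $\int_\Omega (bu+z)z^2\,dx\ge 0$ with $z=v-v_D$, and use the variational bound $\nu_{(0,v_D)}\le 0$ tested against $u$ to get $\int_\Omega(u+cz)u^2\,dx\le 0$; combining these with $bc\le 1$ forces $u+cz\equiv0$, hence $bc=1$ and $bu_d=v_D$, contradicting $\mu_{(u_d,0)}>0$. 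This use of the spectral bound against the putative coexistence state is the missing idea in your proposal.

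A smaller but still substantive error is in Step 2: symmetry of the kernels makes the relevant operators self-adjoint and gives the variational characterization of the spectral bound (which is all the paper uses), but it does \emph{not} guarantee a principal eigenvalue with a positive continuous eigenfunction; the paper stresses that for these nonlocal operators the principal eigenvalue may fail to exist. Consequently your Step 4 construction of sub/supersolutions from ``perturbed endpoints $(\varepsilon\phi,v_D)$'' cannot invoke principal eigenfunctions directly; the paper's Lemma 2.5 circumvents this by building upper/lower solutions from principal eigenfunctions of suitably \emph{perturbed} eigenvalue problems that do admit them. With these three repairs --- replacing your uniqueness claim by the ordered-pair dichotomy tied to Proposition 3.1, inserting the spectral-bound nonexistence argument for cases (ii)/(iii), and replacing principal eigenfunctions by the perturbation construction --- your outline becomes essentially the paper's proof.
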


For competition models with {\it local dispersals}, it is known that to show global dynamics,  it suffices to demonstrate that every positive steady state is locally stable. See \cite{HsuSW1998} and references therein, where the compactness of solutions orbits is a necessary condition. This is not satisfied in the nonlocal model (\ref{original}) due to lack of regularity.

Moreover, in handling the local model (\ref{pde2}), the key  contribution  in \cite{HeNi} is the discovery of  an intrinsic relation among a positive steady state  and  a principal eigenfunction of the linearized problem at this  steady state. However,  in nonlocal models, there are difficulties determining the local stability  by linearized analysis, since principal eigenvalue might not exist.  For single-species models or semi-trivial steady states of competition models, it is known that this issue can be resolved by perturbation arguments and spectral analysis. See \cite{BZh}, \cite{HMMV} and so on. Unfortunately, as far as we are concerned, there is no progress in the studies of linearized problem at positive steady states.  Hence, we have to avoid  analyzing local stability of  positive steady state.

Fortunately,  two-species competition models with {\it nonlocal dispersals} still have the following solution structure:
\begin{itemize}
\item  if one semi-trivial steady state is locally stable while the other is locally unstable, and there is no positive steady state, then the stable one will be globally convergent;
\item if two semi-trivial steady states are both locally unstable, then there exists at least one stable positive steady state and moreover the uniqueness will  imply global convergence.
\end{itemize}
Thus, to prove Theorem \ref{thm-main}, we turn our attention back to the well-known solution structure and verify either the nonexistence  or uniqueness of positive steady state directly based on characteristics of nonlocal operators and arguments by contradiction.

{\it The second main result} concerns the   global dynamics to the competition system (\ref{original}) when both semi-trivial steady states are stable.

\begin{thm}\label{thm-main-2}
Assume that \textbf{(C1)}-\textbf{(C3)} hold and $0<bc\leq 1$. Also assume that (\ref{original}) admits two semi-trivial steady states $(u_d, 0)$ and $(0,v_D)$. For   the system (\ref{original}) with nonlocal operators defined in \textbf{(N)},   if both $(u_d, 0)$ and $(0,v_D)$ are locally stable or neutrally stable, then $bc=1$, $bu_d = v_D$ and  system (\ref{original}) has a continuum of steady states $\{(su_d, (1-s)v_D),\ 0\leq s\leq 1\}$. Moreover,   the solution of (\ref{original}) with $(u_0,v_0)\in \mathbb X_+ \times \mathbb X_+ \setminus \{ 0\}$  approaches to  a steady state in $\{(su_d, (1-s)v_D),\ 0\leq s\leq 1\}$  in  $\mathbb X\times \mathbb X$.
\end{thm}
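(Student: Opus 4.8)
The plan is to translate the two stability hypotheses into sign conditions on spectral bounds, exploit the symmetry (C3) to rewrite these as variational inequalities, and then combine them through Hölder's inequality to force the rigid identities $bc=1$ and $v_D=bu_d$. First I would linearize (\ref{original}) at $(u_d,0)$; since the $v$-equation decouples there, stability is governed by the two diagonal operators $\mathcal{A}[\phi]=d\mathcal{K}[\phi]+(m-2u_d)\phi$ and $\mathcal{B}[\psi]=D\mathcal{P}[\psi]+(M-bu_d)\psi$. Because $u_d>0$ solves $d\mathcal{K}[u_d]+(m-u_d)u_d=0$, the operator $d\mathcal{K}+(m-u_d)$ has $0$ as its top spectral point with positive eigenfunction, so $\langle\mathcal{A}\phi,\phi\rangle\le-(\min_{\bar\Omega}u_d)\|\phi\|_{L^2}^2<0$ and the self-block is automatically stable. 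Hence $(u_d,0)$ is stable or neutrally stable exactly when the spectral bound $\lambda_1(\mathcal{B})\le0$, and $(0,v_D)$ exactly when $\lambda_1(d\mathcal{K}+(m-cv_D))\le0$. Under (C3) both operators are self-adjoint on $L^2(\Omega)$, so each spectral bound equals the supremum of its Rayleigh quotient; here I would invoke the fact, already available for semi-trivial states through the perturbation and spectral analysis cited in the paper, that this spectral bound (attained or not) genuinely determines linear stability.

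Next I would apply the ground-state substitution $\psi=v_D\xi$. Using $M-v_D=-D\mathcal{P}[v_D]/v_D$ together with the symmetric identity $\langle D\mathcal{P}[\varphi],\varphi\rangle=-\tfrac{D}{2}\iint_{\Omega\times\Omega}p(x,y)(\varphi(x)-\varphi(y))^2\,dx\,dy$, the quadratic form collapses to
\begin{equation*}
\langle\mathcal{B}[v_D\xi],v_D\xi\rangle=-\frac{D}{2}\iint_{\Omega\times\Omega}p(x,y)v_D(x)v_D(y)(\xi(x)-\xi(y))^2\,dx\,dy+\int_\Omega(v_D-bu_d)v_D^2\xi^2.
\end{equation*}
Testing with $\xi\equiv1$ kills the Dirichlet term and, since $\lambda_1(\mathcal{B})\le0$, yields $\int_\Omega(v_D-bu_d)v_D^2\le0$. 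The analogous transformation $\phi=u_d\eta$ for $d\mathcal{K}+(m-cv_D)$ gives $\int_\Omega(u_d-cv_D)u_d^2\le0$.

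The crux is to upgrade these two scalar inequalities to a pointwise identity. Writing $P=\int_\Omega u_d^3$ and $Q=\int_\Omega v_D^3$, the inequalities read $Q\le b\int_\Omega u_dv_D^2$ and $P\le c\int_\Omega u_d^2v_D$; bounding the mixed moments by Hölder's inequality, namely $\int_\Omega u_dv_D^2\le P^{1/3}Q^{2/3}$ and $\int_\Omega u_d^2v_D\le P^{2/3}Q^{1/3}$, gives $Q\le b^3P$ and $P\le c^3Q$, hence $Q\le(bc)^3Q$. Since $Q>0$ and $bc\le1$, this forces $(bc)^3=1$, i.e. $bc=1$, and turns every inequality into an equality. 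The equality case of Hölder then forces $u_d\equiv\kappa v_D$ for some constant $\kappa>0$, and equality in $\int_\Omega(v_D-bu_d)v_D^2\le0$ pins $\kappa=1/b$, that is $v_D=bu_d$ (equivalently $u_d=cv_D$). With these identities a direct substitution into (\ref{original}) shows that each $(su_d,(1-s)v_D)$, $0\le s\le1$, is a steady state, producing the claimed continuum.

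For the convergence statement I would use the competitive (monotone) structure of the semiflow generated by (\ref{original}), which preserves the order $(u_1,v_1)\preceq(u_2,v_2)\iff u_1\le u_2,\ v_1\ge v_2$, and note that the continuum is totally ordered in $\preceq$. Combined with a Lyapunov functional for the symmetric system and LaSalle's invariance principle, the $\omega$-limit set of any nontrivial orbit should consist of steady states; connectedness of the $\omega$-limit set together with the ordered one-parameter structure of the equilibrium family then forces convergence to a single member $(su_d,(1-s)v_D)$. I expect the two genuine obstacles to be (a) justifying that the possibly non-attained spectral bound governs stability and that the variational and ground-state reduction is valid for the nonlocal self-adjoint operators, and (b) establishing convergence to a \emph{single} point of the non-isolated continuum in $\mathbb{X}\times\mathbb{X}$ despite the lack of compactness of orbits inherent to nonlocal dispersal, which the paper itself flags as the central analytic difficulty.
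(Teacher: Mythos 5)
Your first half---the rigidity part---is correct and essentially the paper's own argument: symmetry \textbf{(C3)} makes the relevant linearized operators self-adjoint, the stability hypotheses become nonpositivity of Rayleigh quotients, and testing with $v_D$ and $u_d$ (your ground-state substitution with $\xi\equiv 1$ is exactly this) yields $\int_\Omega(v_D-bu_d)v_D^2\,dx\le 0$ and $\int_\Omega(u_d-cv_D)u_d^2\,dx\le 0$. Where the paper multiplies the second inequality by $b^3$, adds, and factors the resulting integrand as $(bu_d-v_D)^2(bu_d+v_D)$ to conclude pointwise equality, you invoke H\"older's inequality and its equality case; both routes work, and yours is a clean variant of the same computation.

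The gap is in the convergence statement, which is the actual substance of the theorem. Your plan rests on a Lyapunov functional, LaSalle's invariance principle, and connectedness of the $\omega$-limit set, but all of these presuppose precompactness of the orbit in $\mathbb X\times\mathbb X$, and this is precisely what fails for nonlocal dispersal: the semiflow has no smoothing effect, orbits are bounded but not precompact in the uniform topology, so the $\omega$-limit set in $\mathbb X\times\mathbb X$ could a priori be empty and no invariance principle applies; moreover you never exhibit the Lyapunov functional the argument would need. You flag this as obstacle (b), but flagging it is not overcoming it---the whole of Section 4 exists to replace the compactness-based scheme you describe. Concretely, the paper (1) proves, via the iteration Lemma \ref{lem-tricky}, that if $u(\cdot,t)$ does not weakly vanish in $L^2(\Omega)$ then $v(\cdot,t)<v_D$ pointwise for all large $t$ (Proposition \ref{prop-weakL2-nonzero}); (2) introduces the monotone envelope $\theta(t)=\sup\{\theta\ |\ u>\theta u_d,\ v<(1-\theta)v_D \ \textrm{in}\ \bar\Omega\}$; (3) runs time-dependent versions of the integral inequalities from Proposition \ref{prop-steadystates} to obtain $\int_0^\infty\int_\Omega(w+cz)^2\,dx\,dt<\infty$, hence $u+cv\to u_d$ in $L^2(\Omega)$ and $\int_\Omega u_t^2\,dx\to 0$, and then exploits compactness of the map $\phi\mapsto\int_\Omega k(\cdot,y)\phi(y)\,dy$ together with simplicity of the principal eigenvalue to identify a subsequential $L^2$ limit $(\alpha_1 u_d,(1-\alpha_1)v_D)$; (4) shows by a contradiction argument that $\theta_*=\alpha_1$, upgrading subsequential to full $L^2$ convergence; and (5) uses the symmetric decreasing envelope $\eta(t)$ and the comparison principle to lift $L^2$ convergence to convergence in $\mathbb X\times\mathbb X$, handling the degenerate cases $\theta_*=1$ and $\eta^*=0$ and ruling out the case that both components weakly vanish. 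None of this machinery, nor any substitute for it, appears in your proposal, so as written it establishes $bc=1$, $bu_d=v_D$ and the continuum of steady states, but not the convergence assertion.
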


Notice that   the solution orbits of the system (\ref{original}) are uniformly bounded, but not precompact  in $\mathbb X\times \mathbb X$ due to lack of regularity.  Thus when there are infinitely many steady states, it is highly nontrivial to demonstrate the global convergence of solutions of the system (\ref{original}) in  $\mathbb X\times \mathbb X$. Indeed, the approaches developed in the proof of Theorem \ref{thm-main-2}, which relies on energy estimates and the  repeated applications of comparison principle,  are original and quite involved. Roughly speaking, the key part of the proof consists of the following steps:
\begin{itemize}
\item Prove that there exists $T>0$ such that the solution $(u(x,t),v(x,t))$ of (\ref{original}) satisfies $u(x,t)>0$, $0<v(x,t)<v_D(x)$ or  $0<u(x,t)<u_d(x)$, $v(x,t)>0$ in $\bar\Omega$ for $t\geq T$.
\item Make use of energy estimates to prove that a subsequence of $(u(\cdot, t), v(\cdot, t))$ converges in $L^2(\Omega) \times L^2(\Omega)$ to a steady state in $\{(su_d, (1-s)v_D),\ 0\leq s\leq 1\}$.
\item  Improve the convergence of a subsequence to the convergence of $(u(\cdot, t), v(\cdot, t))$ in $L^2(\Omega) \times L^2(\Omega)$.
\item Improve the convergence of $(u(\cdot, t), v(\cdot, t))$ in $L^2(\Omega) \times L^2(\Omega)$ to that in $\mathbb X\times \mathbb X$, which is clearly optimal for the system (\ref{original}).
\end{itemize}
Our arguments thoroughly employ the structure of monotone systems and the characteristics of nonlocal operators. We strongly believe that  this approach can be generalized  to handle  monotone system without  compactness of solution orbits. We will turn to this topic in future work.

\subsection{Main results: mixed dispersal strategies}


In many species, dispersion includes both local migration and a small proportion of long-distance migration. See \cite{PM09} and the references therein.  For example,  in genetic model with partial panmixia, the diffusion term is a combination of local and nonlocal dispersals, where the nonlocal  gives the approximation for long-distance migration. See \cite{LNN16, LNS13, N11, N12-81} for modeling and related studies. Moreover, in \cite{KaoLS10, KaoLS12},  to understand the competitive advantage among different types of dispersal strategies, the authors study  the competition  system  where  the movement of one species is purely by random walk while the other species adopts a non-local dispersal strategy.

These works motivate our studies of competing species with mixed dispersal strategies as well as location-dependent competition coefficients and self-regulations.
To be more precise, we will study models with no flux boundary conditions
\begin{equation}\label{general-mix-N}
\begin{cases}
u_t= d \left\{\alpha \mathcal{K} [u]+(1-\alpha) \Delta u \right\} +u(m(x)-b_1(x)u- c(x)v) &\textrm{in } \Omega\times[0,\infty),\\
v_t= D \left\{\beta \mathcal{P} [v]+(1-\beta) \Delta v \right\} +v(M(x)-b(x)u- c_1(x)v) &\textrm{in } \Omega\times[0,\infty),\\
(1-\alpha)\partial u/\partial \gamma =(1-\beta) \partial v/\partial \gamma=0   &\textrm{on } \partial\Omega,\\
u(x,0)=u_0(x),~v(x,0)=v_0(x)  &\textrm{in } \Omega,
\end{cases}
\end{equation}
where $\mathcal{K}$, $\mathcal{P}$ are defined in \textbf{(N)}, $b_1,\ c_1$ represent self-regulations,    and $0\leq \alpha,\beta\leq 1$.
Moreover, assume that
\begin{itemize}
\item[\textbf{(C4)}]  $b(x), c(x), b_1(x), c_1(x) \in \mathbb X$.
\end{itemize}

Equipped with the techniques developed in the study of system (\ref{original}), we manage to derive
{\it the third main result} in this paper, which completely classifies the global dynamics of system (\ref{general-mix-N}) provided that
\begin{equation}\label{general-condition}
\max_{\bar\Omega} b(x) \cdot \max_{\bar\Omega} c (x)\leq \min_{\bar\Omega} b_1(x) \cdot \min_{\bar\Omega} c_1 (x).
\end{equation}

\begin{thm}\label{thm-main-mix}
Assume that \textbf{(C1)}, \textbf{(C2)}, \textbf{(C3)}, \textbf{(C4)} hold and (\ref{general-condition}) is valid. Also assume that (\ref{general-mix-N}) admits two semi-trivial steady states $(\hat{u}_d, 0)$ and $(0,\hat{v}_D)$. Then  there exist exactly four  cases:
\begin{itemize}
\item[(i)] If both $(\hat{u}_d, 0)$ and $(0,\hat{v}_D)$ are locally unstable, then the system (\ref{general-mix-N}) admits a unique positive steady state, which is globally asymptotically stable relative to $\mathbb X_{++} \times \mathbb X_{++}$;
\item[(ii)] If $(\hat{u}_d, 0)$ is locally unstable and $(0,\hat{v}_D)$ is locally stable or neutrally stable, then $(0,\hat{v}_D)$ is globally asymptotically stable relative to $\mathbb X_{++} \times \mathbb X_{++}$;
\item[(iii)] If $(\hat{u}_d, 0)$ is locally stable or neutrally stable and $(0,\hat{v}_D)$ is locally unstable, then $(\hat{u}_d, 0)$ is globally asymptotically stable relative to $\mathbb X_{++} \times \mathbb X_{++}$;
\item[(iv)]  If both $(\hat{u}_d, 0)$ and $(0,\hat{v}_D)$ are locally stable or neutrally stable, then $b(x), c(x), b_1(x), c_1(x)$ must be constants, $bc=b_1c_1$, $b\hat{u}_d = c_1\hat{v}_D$ and the system (\ref{general-mix-N}) has a continuum of steady states $\{(s\hat{u}_d, (1-s)\hat{v}_D),\ 0\leq s\leq 1\}$. Moreover,  the solution of (\ref{general-mix-N}) with
   $(u_0,v_0)\in \mathbb X_+ \times \mathbb X_+ \setminus \{ 0\}$  approaches to
    a steady state in $\{(s\hat{u}_d, (1-s)\hat{v}_D),\ 0\leq s\leq 1\}$ in
    $\mathbb X \times \mathbb X$.
\end{itemize}
\end{thm}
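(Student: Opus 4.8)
The plan is to reduce the whole classification to the abstract theory of monotone dynamical systems, exactly as in the proofs of Theorems \ref{thm-main} and \ref{thm-main-2}, and then to supply the analytic inputs that the mixed local/nonlocal setting requires. The cooperative order for (\ref{general-mix-N}) is the competitive one, and the structural fact recalled before Theorem \ref{thm-main} is that the global dynamics is governed by the relative stability of the two semi-trivial steady states together with the existence and uniqueness of positive steady states. The engine for every quantitative estimate is that, under \textbf{(C3)} and the Neumann boundary condition, the mixed operators $\mathcal{A}_1:=\alpha\mathcal{K}+(1-\alpha)\Delta$ and $\mathcal{A}_2:=\beta\mathcal{P}+(1-\beta)\Delta$ are self-adjoint on $L^2(\Omega)$, so each relevant spectral bound enjoys the variational characterization $\lambda_1(\mathcal{A}+g)=\sup_{\phi\neq0}\|\phi\|_2^{-2}\langle(\mathcal{A}+g)\phi,\phi\rangle$.

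For cases (i)--(iii) I would reproduce the scheme of Theorem \ref{thm-main}, with (\ref{general-condition}) playing the role of $0<bc\le1$. For any positive steady state $(U,V)$, comparison with the single-species problems (\ref{singled})--(\ref{singleD}) first gives $0<U<\hat{u}_d$ and $0<V<\hat{v}_D$, since $bU,cV>0$ strictly lower the respective growth rates. Reading the two steady-state equations as principal-eigenvalue identities $\lambda_1(d\mathcal{A}_1+m-b_1U-cV)=0$ and $\lambda_1(D\mathcal{A}_2+M-bU-c_1V)=0$ with positive eigenfunctions $U,V$, and testing against the semi-trivial states, one combines the resulting integral inequalities with (\ref{general-condition}) to run the same contradiction and uniqueness arguments as in Theorem \ref{thm-main}: no positive steady state exists when exactly one semi-trivial state is stable or neutrally stable (cases (ii),(iii)), and at most one exists when both are unstable (case (i), where the monotone structure already guarantees existence of at least one). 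The monotone dichotomy then yields global convergence to $(0,\hat{v}_D)$, to $(\hat{u}_d,0)$, and to the unique positive state, respectively.

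The genuinely new ingredient is the rigidity in case (iv). Since $\hat{u}_d$ solves (\ref{singled}) with coefficient $b_1$, one has $(d\mathcal{A}_1+m-b_1\hat{u}_d)\hat{u}_d=0$ identically, so $\langle(d\mathcal{A}_1+m-b_1\hat{u}_d)\hat{u}_d,\hat{u}_d\rangle=0$; stability or neutral stability of $(0,\hat{v}_D)$ means $\lambda_1(d\mathcal{A}_1+m-c\hat{v}_D)\le0$, hence $\langle(d\mathcal{A}_1+m-c\hat{v}_D)\hat{u}_d,\hat{u}_d\rangle\le0$. Subtracting gives $\int_\Omega(c\hat{v}_D-b_1\hat{u}_d)\hat{u}_d^2\,dx\ge0$, and symmetrically $\int_\Omega(b\hat{u}_d-c_1\hat{v}_D)\hat{v}_D^2\,dx\ge0$. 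Setting $P=\int_\Omega\hat{u}_d^3$, $Q=\int_\Omega\hat{v}_D^3$, $R=\int_\Omega\hat{u}_d^2\hat{v}_D$, $S=\int_\Omega\hat{u}_d\hat{v}_D^2$, these read $\max c\cdot R\ge\min b_1\cdot P$ and $\max b\cdot S\ge\min c_1\cdot Q$, while Cauchy--Schwarz gives $R^2\le PS$ and $S^2\le QR$, hence $RS\le PQ$. Chaining $\max b\max c\cdot PQ\ge\max b\max c\cdot RS\ge\min b_1\min c_1\cdot PQ$ forces $\max b\max c\ge\min b_1\min c_1$, which against (\ref{general-condition}) collapses every inequality to equality. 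The Cauchy--Schwarz equality forces $\hat{v}_D\equiv\kappa\hat{u}_d$, and tightness of the coefficient bounds forces $b,c,b_1,c_1$ to be constant with $bc=b_1c_1$ and $b\hat{u}_d=c_1\hat{v}_D$; a direct substitution then confirms that $\{(s\hat{u}_d,(1-s)\hat{v}_D):0\le s\le1\}$ is a continuum of steady states.

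It remains to prove global convergence to this continuum, and this is where I expect the main obstacle to lie, precisely because the orbits are bounded but not precompact in $\mathbb{X}\times\mathbb{X}$. Having reduced (iv) to constant coefficients and the relations $c\hat{v}_D=b_1\hat{u}_d$, $b\hat{u}_d=c_1\hat{v}_D$, the system has exactly the degenerate monotone structure of Theorem \ref{thm-main-2}, so I would transport its four-step program: trap the solution after a finite time inside $\{0<v<\hat{v}_D\}$ (or the symmetric region) by comparison; use the self-adjointness of $\mathcal{A}_1,\mathcal{A}_2$ to run an energy estimate producing a subsequence converging in $L^2(\Omega)\times L^2(\Omega)$ to a member of the continuum; upgrade this to convergence of the full orbit in $L^2$ by a repeated comparison argument that pins down the limiting parameter $s$; and finally bootstrap the $L^2$ convergence to $\mathbb{X}$ convergence. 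The delicate points are the last two upgrades, where the absence of compactness forbids free extraction of limits and one must instead exploit the monotone comparison principle together with the rigidity of the continuum; the only additional bookkeeping relative to Theorem \ref{thm-main-2} is carrying the Laplacian part of the mixed operator through the energy identity, which is harmless since it too is self-adjoint and dissipative.
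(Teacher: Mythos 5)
Your treatment of cases (i)--(iii) and of the rigidity assertion in case (iv) is essentially sound and close to the paper's own route (Propositions \ref{prop-localstability-mixed} and \ref{prop-steadystates-mixed}). The only real divergence there is how the two neutral-stability integral inequalities are closed: the paper multiplies one of them by a cube of the competition coefficient and completes a square, arriving at an inequality of the form (\ref{pf-localstability-key}), $\int_{\Omega}(bu_d-v_D)^2(bu_d+v_D)\,dx\leq 0$, while you chain them through Cauchy--Schwarz on the moments $P,Q,R,S$. Both arguments force the same collapse to equality, proportionality $\hat v_D=\kappa\hat u_d$, and constancy of $b,c,b_1,c_1$; your variant is a legitimate, arguably cleaner, alternative for this piece.

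The genuine gap is in the convergence part of case (iv). You assert that the four-step program of Theorem \ref{thm-main-2} transports to the mixed setting with ``the only additional bookkeeping'' being to carry the Laplacian through the energy identity. That is true only for the energy step (the paper's Step 1$'$, where integration by parts under Neumann conditions indeed makes the Laplacian harmless, as in (\ref{pf-3})). But two of the four steps of Theorem \ref{thm-main-2} are \emph{not} energy estimates: the trapping step (Proposition \ref{prop-weakL2-nonzero}) and the upgrade from subsequential to full $L^2$ convergence (Step 2) are pointwise-in-$x$ ODE arguments. They fix $x\in\bar\Omega$ and bound $v_t(x,t)$ or $u_t(x,t)$ by exploiting that the purely nonlocal dispersal term $D\int_\Omega p(x,y)v(y,t)\,dy$ is controlled above and below by the known uniform bounds on the solution --- see (\ref{pf-prop-nonloca1}), (\ref{pf-prop-nonlocal2}), (\ref{pf-thm12-step2-u1}) and (\ref{pf-thm12-step2-u2}). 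When $\beta<1$ the dispersal term contains $(1-\beta)\Delta v$, and a uniform bound such as $v\leq(1+h_j)\hat v_D$ gives no pointwise control whatsoever on $\Delta v(x,t)$, so each of these inequalities collapses and the argument cannot even be started at a frozen $x$.

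This is precisely why the paper treats the genuinely mixed case ($\alpha=1$, $0\leq\beta<1$, or symmetrically) with new constructions: explicit time-dependent supersolutions $V_1$ and $V_2$ of the parabolic equation, of the form $\bigl(1-\sigma(t-\cdot)\bigr)(1+\ell_j)\hat v_D(x)$, combined with the parabolic comparison principle (Proposition \ref{prop-weakL2-nonzero-local+nonlocal} and Step 2$'$), and, in Step 3$'$, the precompactness of the $v$-orbit in $\mathbb X$ coming from parabolic smoothing, in place of the two-sided $\theta_*/\eta^*$ argument. None of this is bookkeeping inside an energy identity; it is a different mechanism, and your proposal supplies no substitute for it. Note also that the paper disposes of the case $0\leq\alpha,\beta<1$ separately by citing \cite{HS2006}, since there both orbits are precompact; the only case requiring new analysis is the one where exactly one of $\alpha,\beta$ equals $1$, and that is exactly the case your plan leaves unproved.
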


First of all, we point out that the assumption (\ref{general-condition}) is a straightforward generalization of the assumption $0<bc\leq 1$ in the system (\ref{original}) and does not cause any essential difficulties in the proofs.

For the proof of Theorem \ref{thm-main-mix}(i), (ii), (iii), if $\alpha,\beta\in [0,1)$, i.e., local dispersal is at least partially adopted for both species, the method  in \cite{HeNi} can be applied since solution orbits still admit compactness. But the situation is different if at least  one of $\alpha, \beta$ is equal to $1$. However, the approach developed in the proof of Theorem \ref{thm-main} can be employed to handle $\alpha,\beta\in [0,1]$ all at once.

In the proof of Theorem \ref{thm-main-mix}(iv), extra care is needed when either $\alpha=1$ or $\beta=1$.
The proof of this case mainly  follows from that of the approach in handling the case that $\alpha=\beta=1$, which has been proved in Theorem \ref{thm-main-2}. However, some modifications  are necessary due to the essential difference between local and nonlocal diffusion. We will emphasize the different parts and the corresponding adjustments in the proof.  Moreover, when $\alpha,\beta\in [0,1)$, thanks to the  compactness of solution orbits,   the convergence of solutions is known \cite{HS2006}.

At the end, we emphasize that compared with local models, {\it lack of regularity} is the key issue in the studies of models with nonlocal dispersals. The approaches and  techniques developed in this paper to overcome the difficulties caused by this issue are important contributions of our work.

This paper is organized as follows. Section 2 provides some background properties and  a general result concerning global dynamics of two-species competition models, regardless of whether the dispersal kernels are symmetric or not. Sections 3 and 4  are devoted to the proofs of Theorems \ref{thm-main} and \ref{thm-main-2} respectively. At the end,  the proof of Theorem  \ref{thm-main-mix} is included in Section 5.


\section{Preliminaries}
In this section, we prepare some background results and   describe the scheme of proofs of main results. It is worth pointing out that throughout this section, assumption \textbf{(C3)} is not imposed, i.e., the nonlocal operators can be {\it nonsymmetric}.

\subsection{Single-species model}

For the convenience of readers, we include a general result concerning single-species models with nonlocal operators. To be more specific, we consider a more general problem, which obviously covers (\ref{singled}) and (\ref{singleD}), as follows:
\begin{equation}\label{single-general}
u_t(x,t) =\mathcal{L}[u] + f(x,u) \doteq d \int_{\Omega}k(x,y)u(y,t)dy +f(x,u),
\end{equation}
where $k(x,y)$ satisfies \textbf{(C2)} and $f(x,u)$ satisfies
\begin{itemize}
\item[\textbf{(f1)}] $f\in C(\bar\Omega\times \mathbb R^+, \mathbb R)$, $f$ is $C^1$ continuous  in $u$ and $f(x,0)=0$;
\item[\textbf{(f2)}] For $u>0$, $f(x,u)/u$ is  strictly decreasing in $u$;
\item[\textbf{(f3)}] There exists $C_1>0$ such that  $d \int_{\Omega}k(x,y)dy +f(x,C_1)/C_1\leq 0$ for all $x\in\Omega$.
\end{itemize}

To study the existence of positive steady state of (\ref{single-general}), it is natural to consider the local stability of the trivial solution $u\equiv 0$, which is determined by the signs of
$$
\lambda^*=\sup \left\{\textrm{Re}\, \lambda\, |\, \lambda\in \sigma(\mathcal{L}+f_u(x,0)   \right\},
$$
where we think of  $\mathcal{L}+f_u(x,0)$ as an operator from $\mathbb X$ to $\mathbb X$.
Also, if $\lambda $ is an eigenvalue of this operator with a continuous and positive eigenfunction, we call $\lambda $ {\it principal eigenvalue}.


\begin{thm}\label{thm-single}
Under the assumptions \textbf{(C2)}, \textbf{(f1)}, \textbf{(f2)} and \textbf{(f3)}, problem (\ref{single-general}) admits a unique positive steady state in $\mathbb X$ if and only if $\lambda^*>0$. Moreover, the unique positive steady state, whenever it exists, is globally asymptotically stable relative to $ \mathbb X_{++}$, otherwise, $u\equiv 0$ is globally asymptotically stable relative to $\mathbb X_{++}$.
\end{thm}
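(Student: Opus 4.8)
The plan is to treat (\ref{single-general}) as a strongly monotone, strictly subhomogeneous semiflow on $\mathbb{X}_+$ and to extract its global dynamics from the sign of $\lambda^*$, all the while replacing the compactness arguments available in the local theory by monotone sandwiching together with Dini's theorem. First I would record the two structural facts that drive everything. The comparison principle for (\ref{single-general}) follows by writing $f(x,u)=c(x,t)u$ with $c$ bounded (using \textbf{(f1)}, $f(x,0)=0$, and the a priori bound below), so that the associated time-$t$ map $Q_t$ is order preserving on $\mathbb{X}_+$; moreover, since $k(x,x)>0$ in \textbf{(C2)}, the integral term propagates positivity and $Q_t u_0\gg 0$ in $\bar\Omega$ for every $u_0\in\mathbb{X}_{++}$ and $t>0$. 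Next, any constant $K\ge C_1$ is a time-independent supersolution: by \textbf{(f2)}, $f(x,K)/K\le f(x,C_1)/C_1$, so $\mathcal{L}[K]+f(x,K)=K\big(d\int_\Omega k(x,y)\,dy+f(x,K)/K\big)\le 0$ by \textbf{(f3)}. This furnishes the a priori bound $0\le Q_t u_0\le K$ once $K\ge\max_{\bar\Omega}u_0$, and forces $t\mapsto Q_tK$ to be nonincreasing; its pointwise monotone limit is a nonnegative steady state, continuous because fixed points of $\mathcal{L}+f$ lie in $\mathbb{X}$, so Dini's theorem upgrades the convergence to uniform convergence in $\mathbb{X}$.

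The existence dichotomy I would settle through the linearisation $\mathcal{L}+h$, $h(x):=f_u(x,0)$. If a positive steady state $u^*$ exists it is an eigenfunction of $\mathcal{L}+f(x,u^*)/u^*$ with eigenvalue $0$; since \textbf{(f2)} gives $f(x,u^*)/u^*<h(x)$ pointwise, the spectral bound $\lambda^*$ of $\mathcal{L}+h$ is strictly positive, which proves the ``only if'' direction. For the converse, when $\lambda^*>0$ I would invoke the generalised principal eigenvalue theory, as in \cite{BZh,HMMV}: for any $0<\lambda<\lambda^*$ there is a continuous $\phi\gg 0$ with $(\mathcal{L}+h)\phi\ge\lambda\phi$, and then $\epsilon\phi$ is a strict subsolution for $\epsilon$ small, because $\mathcal{L}[\epsilon\phi]+f(x,\epsilon\phi)=\epsilon(\mathcal{L}+h)\phi+\big(f(x,\epsilon\phi)-\epsilon h\phi\big)\ge\epsilon\lambda\phi+o(\epsilon)$ by the $C^1$ expansion of $f$ at $0$. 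Monotone iteration between $\epsilon\phi$ and a large supersolution $K$ then produces a positive steady state. This step is where I expect the main obstacle, since in the nonlocal setting the principal eigenvalue need not be attained; the generalised eigenvalue and its approximate eigenfunction are precisely what let me build a positive subsolution without a genuine eigenfunction.

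Uniqueness I would prove by a sliding argument: given two positive steady states $u_1,u_2\in\mathbb{X}_{++}$, set $s^*=\sup\{s>0: su_1\le u_2\}\in(0,\infty)$ and suppose $s^*<1$. Using $\mathcal{L}[u_1]=-f(x,u_1)$ and the strict subhomogeneity in \textbf{(f2)}, one gets $\mathcal{L}[s^*u_1]+f(x,s^*u_1)=-s^*f(x,u_1)+f(x,s^*u_1)>0$, so $s^*u_1$ is a strict subsolution lying below the steady state $u_2$; the nonlocal strong maximum principle (again using $k(x,x)>0$) then yields $s^*u_1\ll u_2$, contradicting the definition of $s^*$. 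Hence $s^*\ge 1$, and by symmetry $u_1=u_2$.

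Finally, for the global dynamics I would squeeze. Given $u_0\in\mathbb{X}_{++}$, strong positivity gives $\epsilon\phi\le Q_{t_1}u_0\le K$ for some $t_1>0$, $\epsilon$ small and $K$ large; the lower barrier $t\mapsto Q_t(\epsilon\phi)$ increases and the upper barrier $t\mapsto Q_tK$ decreases, each converging uniformly (Dini) to a positive steady state, which by uniqueness must be the common limit $u^*$. The sandwich then delivers $Q_tu_0\to u^*$ in $\mathbb{X}$, with Lyapunov stability read off from the uniformity of the bounds. When $\lambda^*\le 0$ the same squeeze applies with the lower barrier replaced by $0$: there is no positive steady state, so the decreasing upper barrier converges to $0$, proving that $u\equiv 0$ is globally asymptotically stable relative to $\mathbb{X}_{++}$.
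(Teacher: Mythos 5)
Your proposal is correct and takes essentially the same route as the paper's proof: monotone iteration between the constant supersolution $K\ge C_1$ and a small positive subsolution obtained from a (perturbed/generalized) principal eigenfunction, continuity of the monotone $L^\infty$ limits upgraded to $\mathbb X$-convergence by Dini's theorem, uniqueness of the positive steady state, and a sandwich argument to get global asymptotic stability relative to $\mathbb X_{++}$. The only real difference is one of presentation: the steps you argue directly---the construction of the subsolution when no genuine principal eigenfunction exists, the sliding (subhomogeneity) proof of uniqueness, and the automatic continuity of bounded steady states---are precisely the ingredients the paper imports by citation from \cite{Coville2010} and \cite{BZh}.
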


Theorem \ref{thm-single} has been obtained in \cite{BZh} for symmetric operators in the one dimensional case and partially obtained in \cite{Coville2010}  for nonsymmetric operators of special type.
More precisely, in \cite{Coville2010},  the author only derives the existence of positive steady states in $L^{\infty}(\Omega)$ and their pointwise convergence. The idea in the proof of Theorem \ref{thm-single} originally is motivated by single-species models with local dispersal. However, if replaced by nonlocal dispersal, two additional obstacles arsie:
\begin{itemize}
\item the principal eigenvalue of the operator  $\mathcal{L}+f_u(x,0)$ might not exist;
\item the solution orbit is not precompact in $L^{\infty}(\Omega)$.
\end{itemize}
We will briefly explain how to improve the results in \cite{BZh} and \cite{Coville2010}.


\begin{proof}[Proof of Theorem \ref{thm-single}]
Since the spectrum of the operator  $\mathcal{L}+f_u(x,0)$  has been thoroughly studied in \cite{LiCovilleWang},
the arguments  in \cite[Section 6]{Coville2010} can be applied.
In particular, we just explain how to obtain the global convergence of the positive steady state
 when $\lambda^*>0$.

Similar to \cite[Section 6.1]{Coville2010}, the existence of positive steady state can proved by the construction of upper and lower solutions, denoted by $M$ and $\delta \tilde\phi$ respectively, where $M\geq C_1$, $\delta>0$ are constants and  $\tilde\phi$ is some suitable  positive function in $\mathbb X$.
Thus there exist $\hat{u}\geq \underline{u} >0$, with
$\hat{u},\ \underline{u}\in L^{\infty}(\Omega)$, such that
$$
\lim_{t\rightarrow +\infty} u(x,t; M) = \hat{u}(x)\ \textrm{and} \
\lim_{t\rightarrow +\infty} u(x,t; \delta \tilde{\phi}) = \underline{u}(x)\ \ \textrm{pointwisely}.
$$
Also $\hat{u}$ and $\underline{u}$ are positive steady states of (\ref{single-general}) in $L^{\infty}(\Omega)$. Then applying the same arguments in \cite[Page 434]{BZh}, one sees that $\hat{u},\ \underline{u}\in \mathbb X$. Thanks to Dini's Theorem, we have
\begin{equation}\label{appendix-converge}
\lim_{t\rightarrow +\infty} u(x,t; M) = \hat{u}(x)\ \textrm{and} \
\lim_{t\rightarrow +\infty} u(x,t; \delta \tilde{\phi}) = \underline{u}(x) \ \textrm{in}\ \mathbb X.
\end{equation}

Since $\hat{u},\ \underline{u}\in \mathbb X$, the arguments in \cite[Section 6.3]{Coville2010} can be applied to obtain the uniqueness of positive steady states.

Moreover, for any $u_0\in \mathbb X_+   \setminus \{ 0\}$, choose $M> \|u_0\|_{\mathbb X} +1$  large enough such $u \equiv M$ is an upper solution of (\ref{single-general}).
Thus
\begin{equation}\label{pf-single-upper}
u(x,t;u_0) \leq u(x,t;M)\leq M.
\end{equation}
Due to \textbf{(f1)} and \textbf{(f2)},
\begin{eqnarray*}
u_t = d\int_{\Omega}  k(x,y) u(y,t) dy + f(x,u)\geq  d\int_{\Omega}  k(x,y) u(y,t) dy -c_0 u,
\end{eqnarray*}
where $ c_0 = \max_{x\in\bar\Omega,\ 0<u <M} |f(x,u)|/u<\infty.$ By comparison principle, it is easy to see that $u(x,t)> 0$ in $\bar\Omega$ for $t>0$.
Notice that   $\delta$ can be arbitrarily small, hence the desired global asymptotical stability follows from uniqueness.
\end{proof}


\subsection{Competition models}

From now on, for convenience, we rewrite the  nonlocal operators defined in \textbf{(N)}  as follows
\begin{equation}\label{K-kernel}
\mathcal{K}[u] = \int_{\Omega}k(x,y)u(y)dy- a_d(x)  u(x),
\end{equation}
\begin{equation}\label{P-kernel}
\mathcal{P}[v] = \int_{\Omega}p(x,y)v(y)dy- a_D(x)   v(x),
\end{equation}
where   $a_d(x)=\int_{\Omega}k(y,x) dy,\ a_D(x)=\int_{\Omega}p(y,x) dy$.

For clarity, we will focus on competition model (\ref{original}) and always assume that
there exist two semi-trivial steady states $(u_d, 0)$ and $(0,v_D)$.


First of all,  the linearized operator of (\ref{original})  at $(u_d, 0)$ is
\begin{equation}\label{lin-d}
\mathcal{L}_{(u_d,0)} {\phi\choose\psi}={d\mathcal{K}[\phi]+[m(x)-2u_d]\phi-cu_d\psi \choose D\mathcal{P}[\psi]+[M(x)-bu_d]\psi}.
\end{equation}
Also,  the linearized operator of (\ref{original})  at $(0, v_D)$ is
\begin{equation}\label{lin-D}
\mathcal{L}_{(0, v_D)} {\phi\choose\psi}={ d \mathcal{K}[\phi] +[m (x)-cv_D]\phi  \choose D\mathcal{P}[\psi] +[M(x)-2v_D]\psi-bv_D\phi }.
\end{equation}
Denote
\begin{eqnarray}\label{PEV}
&&  \mu_{(u_d,0)}=\sup \left\{\textrm{Re}\, \lambda\, |\, \lambda\in \sigma(D\mathcal{P}+[M(x)-bu_d])   \right\} \\
&&  \nu_{(0, v_D)}= \sup \left\{\textrm{Re}\, \lambda\, |\, \lambda\in \sigma( d \mathcal{K}  +[m (x)-cv_D])   \right\}.\nonumber
\end{eqnarray}
It is known that the signs of $\mu_{(u_d,0)}$ and $\nu_{(0, v_D)}$ determine the local stability/instability of
$(u_d,0)$ and  $(0, v_D)$ respectively.  This is explicitly stated as follows and the proof is omitted since it is standard.
\begin{lem}\label{lm-signs}
Assume that the assumptions \textbf{(C1)}, \textbf{(C2)} hold. Then
\begin{itemize}
\item[(i)] $(u_d,0)$ is locally unstable if $\mu_{(u_d,0)}>0$; $(u_d,0)$ is locally stable if $\mu_{(u_d,0)}<0$; $(u_d,0)$ is neutrally stable if $\mu_{(u_d,0)}=0$.
\item[(ii)] $(0, v_D)$ is locally unstable if $\nu_{(0, v_D)}>0$; $(0, v_D)$ is locally stable if $\nu_{(0, v_D)}<0$; $(0, v_D)$ is neutrally stable if $\nu_{(0, v_D)}=0$.
\end{itemize}
\end{lem}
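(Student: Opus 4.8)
The plan is to exploit the block-triangular structure of the linearized operators and reduce each stability question to the sign of a single spectral bound. Consider first $(u_d,0)$. The operator $\mathcal{L}_{(u_d,0)}$ in (\ref{lin-d}) is upper triangular: the $\psi$-equation $\psi_t = D\mathcal{P}[\psi] + (M(x)-bu_d)\psi$ is decoupled from $\phi$, while $\phi$ is merely driven by $\psi$ through the term $-cu_d\psi$. Since the spectrum of a bounded block-triangular operator is contained in the union of the spectra of its diagonal blocks, I would record the identity
\begin{equation*}
\sup\textrm{Re}\,\sigma(\mathcal{L}_{(u_d,0)}) = \max\left\{ \sup\textrm{Re}\,\sigma(d\mathcal{K}+(m-2u_d)),\ \mu_{(u_d,0)}\right\},
\end{equation*}
so that it only remains to control the first diagonal block.

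Second, I would show the $\phi$-block has strictly negative spectral bound. Since $u_d$ solves (\ref{singled}) one has $(d\mathcal{K}+(m-u_d))u_d=0$ with $u_d>0$ on $\bar\Omega$; hence $0$ is a principal eigenvalue of $d\mathcal{K}+(m-u_d)$, a positive continuous eigenfunction forcing its eigenvalue to be the spectral bound by the Krein--Rutman-type results in \cite{LiCovilleWang}. Writing $d\mathcal{K}+(m-2u_d)=(d\mathcal{K}+(m-u_d))-u_d$ and using that shifting the zeroth-order coefficient by a constant shifts the spectral bound by the same constant, together with the monotonicity of the spectral bound in that coefficient, subtracting $u_d\geq \min_{\bar\Omega}u_d>0$ decreases the spectral bound by at least $\min_{\bar\Omega}u_d$. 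Thus $\sup\textrm{Re}\,\sigma(d\mathcal{K}+(m-2u_d))<0$, and the displayed identity shows that the sign of $\sup\textrm{Re}\,\sigma(\mathcal{L}_{(u_d,0)})$ agrees with that of $\mu_{(u_d,0)}$.

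Third, I would pass from the spectral bound to nonlinear stability. When $\mu_{(u_d,0)}>0$, I take initial data for which the solution of the decoupled linear $\psi$-equation grows; the coupling then forces $\phi$ to grow as well, so the zero solution of the linearized system is unstable and the principle of linearized instability yields instability of $(u_d,0)$. When $\mu_{(u_d,0)}<0$, both diagonal blocks have negative spectral bound, the linearized semigroup decays, and $(u_d,0)$ is stable; the borderline $\mu_{(u_d,0)}=0$ gives neutral stability. The assertions for $(0,v_D)$ follow verbatim after exchanging the roles of the two species, i.e. $\mathcal{K}\leftrightarrow\mathcal{P}$, $u_d\leftrightarrow v_D$, and $\mu_{(u_d,0)}\leftrightarrow\nu_{(0,v_D)}$.

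The main obstacle is that for nonlocal operators the principal eigenvalue need not exist, so one cannot simply read off stability from an eigenfunction. The remedy, and the only genuinely nonlocal ingredient, is that each diagonal block has the form (compact integral operator) $+$ (multiplication by a continuous function); its essential spectrum is the range of that multiplier, a compact real interval, and any spectrum lying to its right consists of isolated eigenvalues of finite multiplicity, so the spectral bound equals the growth bound of the generated $C_0$-semigroup in every case. This spectral-mapping property, together with the strict monotonicity of the spectral bound invoked above, is precisely what the spectral analysis of \cite{LiCovilleWang} supplies, which is why the authors regard the statement as standard once those results are available.
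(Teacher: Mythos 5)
The paper itself omits the proof of this lemma as ``standard,'' so there is no internal argument to compare against; what you have written is a correct and complete version of exactly the argument the authors have in mind. Your route --- exploit the upper-triangular structure of $\mathcal{L}_{(u_d,0)}$, show the $\phi$-block $d\mathcal{K}+(m-2u_d)$ has strictly negative spectral bound because $u_d>0$ is a positive continuous eigenfunction of $d\mathcal{K}+(m-u_d)$ with eigenvalue $0$ (so that bound is $0$) and subtracting $u_d\geq\min_{\bar\Omega}u_d>0$ pushes it down, then invoke linearized stability/instability --- is the expected one, and it correctly avoids both the symmetry hypothesis \textbf{(C3)} (not assumed in this lemma, so the Rayleigh-quotient characterization used later in Proposition 3.1 is unavailable) and any assumption that principal eigenvalues exist.

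Two refinements are worth recording. First, for block upper-triangular operators on a Banach space one only has the inclusion $\sigma(\mathcal{L}_{(u_d,0)})\subseteq\sigma\bigl(d\mathcal{K}+(m-2u_d)\bigr)\cup\sigma\bigl(D\mathcal{P}+(M-bu_d)\bigr)$ in general; the equality of spectral bounds you display is not automatic. What saves you is the elementary converse that any $\lambda$ in the spectrum of one diagonal block but \emph{not} in the spectrum of the other must lie in $\sigma(\mathcal{L}_{(u_d,0)})$ (solve the triangular system defining the resolvent). Once your second step places $\sigma\bigl(d\mathcal{K}+(m-2u_d)\bigr)$ strictly in the left half-plane, every spectral point of the $\psi$-block with real part close to $\mu_{(u_d,0)}$ survives in the full spectrum whenever $\mu_{(u_d,0)}\geq 0$, and the sign of $\sup\mathrm{Re}\,\sigma(\mathcal{L}_{(u_d,0)})$ agrees with that of $\mu_{(u_d,0)}$ in all three cases; so the conclusion stands, but the displayed identity should be justified this way rather than asserted. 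Second, since $d\mathcal{K}+h$ and $D\mathcal{P}+h$ are \emph{bounded} operators on $\mathbb X$, the semigroups they generate are uniformly continuous and the spectral mapping theorem gives spectral bound $=$ growth bound directly; your appeal to the essential-spectrum structure (compact $+$ multiplication) in the last paragraph is not wrong, but it is more machinery than the lemma requires. Also, in the instability step, growth of the decoupled $\psi$-component alone already contradicts stability; you do not need the coupling to force $\phi$ to grow.
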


Remark that as explained in Section 2.1,  in general $\mu_{(u_d,0)}$ and $\nu_{(0, v_D)}$ might not be principal eigenvalues of the corresponding linearized operators.
See \cite{LiCovilleWang} and its references for more discussions.


Next, some definitions and basic properties are  included since they will be useful in the proof of main results.

\begin{definition}
Define the {\it competitive order} in $\mathbb X \times \mathbb X$:  $(u_1, v_1)\leq_c(<_c)(u_2,v_2)$ if $u_1 \leq(<) u_2$ and $ v_1\geq(>) v_2$.
\end{definition}

\begin{definition}
We say $(u,v)\in \mathbb X\times \mathbb X$ is a {\it lower(upper) solution} of the system (\ref{original}) if
\begin{equation*}
\begin{cases}
0\leq (\geq) d\mathcal{K}[u]+u(m(x)-u-cv) & \textrm{in}\; \Omega,\\
0\geq (\leq) D\mathcal{P}[v]+v(M(x)-bu-v)  & \textrm{in}\; \Omega.
\end{cases}
\end{equation*}
\end{definition}

\begin{lem}\label{lm-monotone}
Assume that $(\tilde{u}, \tilde{v})$ and $(\underline{u},\underline{v})$ are upper and lower solutions of the system (\ref{original}) respectively with $\tilde{u}, \underline{u}, \tilde{v}, \underline{v}>0$. Then
\begin{itemize}
\item[(i)]  The solution of (\ref{original}) with initial value $(\tilde{u}, \tilde{v})$ is decreasing in $t$ under the competitive order.
\item[(ii)] The solution of (\ref{original}) with initial value $(\underline{u},\underline{v})$ is increasing in $t$ under the competitive order.
\end{itemize}
\end{lem}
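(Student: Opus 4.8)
The plan is to realize (\ref{original}) as a \emph{monotone semiflow} for the competitive order $\le_c$ and then to extract the asserted time-monotonicity from the semigroup property alone, so that no differentiability of the orbit in $t$ is ever needed. Write $\Phi_t(u_0,v_0)=(u(\cdot,t),v(\cdot,t))$ for the solution of (\ref{original}). First I would record the comparison principle in the form adapted to $\le_c$. After the change of variable $V:=-v$ (equivalently $\tilde v:=C-v$ with $C$ large, to stay in the positive cone), the pair $(u,V)$ solves
\begin{equation*}
\begin{cases}
u_t = d\mathcal{K}[u] + u(m(x)-u+cV),\\
V_t = D\mathcal{P}[V] + V(M(x)-bu+V),
\end{cases}
\end{equation*}
whose reaction is quasimonotone (cooperative) on the invariant region $\{v\ge 0\}=\{V\le 0\}$: the relevant cross-derivatives are $cu\ge 0$ and $-bV=bv\ge 0$, while the nonlocal operators in (\ref{K-kernel})--(\ref{P-kernel}) have nonnegative off-diagonal (integral) parts by \textbf{(C2)}. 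The associated linear nonlocal system then satisfies a weak maximum principle, so $\Phi_t$ is order-preserving for $\le_c$. Note that under $V=-v$ the competitive order becomes the standard product order on $(u,V)$, which is why this transformation is the natural bookkeeping device.

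Second, I would convert the upper-solution inequalities into a one-sided-in-time bound. Viewing the time-independent pair $(\tilde u,\tilde v)$ as initial data, the definition of upper solution says \emph{exactly} that the constant-in-$t$ function $(\tilde u,\tilde v)$ is a super-solution of the cooperative system above (both displayed inequalities transcribe, line by line, into the super-solution conditions for $(\tilde u,-\tilde v)$). The comparison principle from the first step then yields
$$\Phi_t(\tilde u,\tilde v)\le_c(\tilde u,\tilde v)\qquad\text{for all }t\ge 0,$$
that is, $u(\cdot,t)\le\tilde u$ and $v(\cdot,t)\ge\tilde v$. The positivity hypotheses $\tilde u,\tilde v>0$ enter precisely here: they keep the orbit in the cone where the cross-coefficients $-cu,-bv$ retain their sign, validating the comparison along the whole orbit.

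Third, I would upgrade this single inequality to full monotonicity using the semigroup property. For $0\le t_1<t_2$ put $s=t_2-t_1>0$; combining $\Phi_s(\tilde u,\tilde v)\le_c(\tilde u,\tilde v)$ from the previous step with the order-preserving property of $\Phi_{t_1}$ gives
$$\Phi_{t_2}(\tilde u,\tilde v)=\Phi_{t_1}\big(\Phi_s(\tilde u,\tilde v)\big)\le_c\Phi_{t_1}(\tilde u,\tilde v),$$
so $t\mapsto\Phi_t(\tilde u,\tilde v)$ is non-increasing in $\le_c$; this is statement (i). Statement (ii) is the mirror image: a lower solution $(\underline u,\underline v)$ is a $\le_c$-sub-solution, hence $\Phi_t(\underline u,\underline v)\ge_c(\underline u,\underline v)$, and the same semigroup manipulation shows the orbit is non-decreasing.

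The hard part will be the first step, the comparison principle for the nonlocal competitive system, since (as stressed throughout the paper) nonlocal dispersal offers no regularization, so neither the strong maximum principle nor parabolic smoothing is available. I would handle it by an integrating-factor reduction $w\mapsto e^{\lambda t}w$ with $\lambda$ chosen so large that all zeroth-order coefficients become nonnegative; this turns the question into whether a flow of the form $w_t=(\text{nonnegative integral operator})w+(\text{nonnegative potential})w$ preserves the nonnegative cone, which follows from a first-time-of-sign-change argument at a point $x_0\in\bar\Omega$ minimizing $w$ (such a point exists by continuity and compactness of $\bar\Omega$), where the integral term is $\ge 0$ and the local term vanishes. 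Because $\mathcal{K},\mathcal{P}$ are bounded operators, the nonlocal character is harmless at this level, and the only remaining care is the sign-bookkeeping of the cross-coefficients on the positive cone already isolated above.
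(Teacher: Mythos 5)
Your proof is correct and is exactly the standard monotone-systems argument this lemma rests on: the paper itself states the lemma without proof (treating it as routine), and the intended route is precisely yours — order preservation of the semiflow under the competitive order (via the cooperative change of variables $v\mapsto -v$ and a first-crossing comparison argument, which works for the bounded nonlocal operators in \textbf{(N)}), the observation that an upper (lower) solution is a stationary super-(sub-)solution so that $\Phi_s(\tilde u,\tilde v)\le_c(\tilde u,\tilde v)$, and then the semigroup identity $\Phi_{t_2}=\Phi_{t_1}\circ\Phi_s$ to propagate this into monotonicity in $t$. The only point worth noting is that your Step 2 uses comparison between a solution and a super-solution rather than between two solutions, but your first-crossing argument proves that stronger form with no modification, so there is no gap.
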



\begin{lem}\label{lm-upper-lower}
Assume that the assumptions \textbf{(C1)}, \textbf{(C2)} hold. Also assume that  system (\ref{original}) admits two semi-trivial steady states $(u_d, 0)$ and $(0,v_D)$.
\begin{itemize}
\item[(i)]If $\mu_{(u_d,0)}>0$, then there exists $\varepsilon_1>0$ such that  for any $0<\varepsilon\leq\varepsilon_1$ and $0<\delta\leq\varepsilon_1$, there exists an upper solution $(\tilde{u}, \tilde{v})$ of (\ref{original}) satisfying
$$
\tilde{u}=(1+\delta)u_d(x),\ 0<\tilde{v}<\varepsilon.
$$
\item[(ii)] If $\nu_{(0, v_D)}>0$, then there exists $\varepsilon_2>0$ such that  for any $0<\varepsilon\leq\varepsilon_2$ and $0<\delta\leq\varepsilon_2$, there exists a lower solution $(\underline{u}, \underline{v})$ of (\ref{original}) satisfying
$$
0<\underline{u}<\varepsilon,\ \underline{v}=(1+\delta)v_D(x).
$$
\end{itemize}
\end{lem}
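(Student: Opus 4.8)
The plan is to exploit the competitive structure of the system: in each part one component of the upper (respectively lower) solution is prescribed as a dilation of the relevant semi-trivial state, which makes the corresponding inequality hold \emph{automatically} by virtue of the single-species equation, so that the entire problem collapses onto the construction of the other component out of the positivity of the spectral bound. I treat (i) in detail; part (ii) is entirely symmetric with the roles of the two equations interchanged.

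First I would verify the $u$-inequality for the prescribed first component. Setting $\tilde u=(1+\delta)u_d$ and using that $u_d$ solves $d\mathcal{K}[u_d]+u_d(m-u_d)=0$, a direct computation gives
\begin{equation*}
d\mathcal{K}[\tilde u]+\tilde u(m-\tilde u-c\tilde v) = -(1+\delta)u_d\bigl(\delta u_d+c\tilde v\bigr)\le 0
\end{equation*}
for \emph{every} $\tilde v>0$, since $u_d,\delta,c>0$. Thus the first inequality in the definition of an upper solution holds irrespective of $\tilde v$, and the task reduces to producing a strictly positive $\tilde v$ with $\|\tilde v\|_{\mathbb X}<\varepsilon$ satisfying the subsolution inequality
\begin{equation*}
D\mathcal{P}[\tilde v]+\tilde v\bigl(M-b(1+\delta)u_d-\tilde v\bigr)\ge 0 \quad\text{in }\Omega.
\end{equation*}

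The natural ansatz is $\tilde v=\eta\varphi$ with $\eta>0$ small and $\varphi\in\mathbb X$, $\varphi>0$, a positive function attached to the linearized operator $\mathcal{L}:=D\mathcal{P}+[M(x)-bu_d]$, whose spectral bound is exactly $\mu_{(u_d,0)}>0$. Substituting and writing $M-b(1+\delta)u_d=(M-bu_d)-b\delta u_d$ gives
\begin{equation*}
D\mathcal{P}[\eta\varphi]+\eta\varphi\bigl(M-b(1+\delta)u_d-\eta\varphi\bigr) = \eta\,\mathcal{L}\varphi-\eta\varphi\,(b\delta u_d+\eta\varphi).
\end{equation*}
Hence it suffices to find $\varphi>0$ and a constant $\kappa>0$ with $\mathcal{L}\varphi\ge\kappa\varphi$ in $\bar\Omega$; then the right-hand side is at least $\eta\varphi(\kappa-b\delta u_d-\eta\varphi)$, which is nonnegative as soon as $b\delta\|u_d\|_{\mathbb X}+\eta\|\varphi\|_{\mathbb X}\le\kappa$. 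Choosing $\varepsilon_1$ small in terms of $\kappa,b,\|u_d\|_{\mathbb X}$ and then selecting $\eta=\eta(\varepsilon,\delta)$ small enough simultaneously secures this bound together with $\eta\|\varphi\|_{\mathbb X}<\varepsilon$, producing the required upper solution for all $0<\varepsilon,\delta\le\varepsilon_1$.

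The main obstacle is the very existence of such a pair $(\varphi,\kappa)$: when $\mu_{(u_d,0)}$ fails to be a principal eigenvalue — which, as stressed after Lemma \ref{lm-signs}, may genuinely occur for nonlocal operators — there is no positive eigenfunction available to serve as $\varphi$. I would circumvent this by invoking the spectral analysis of \cite{LiCovilleWang}: the spectral bound admits the characterization $\mu_{(u_d,0)}=\sup\{\lambda:\exists\,\varphi\in\mathbb X,\ \varphi>0,\ \mathcal{L}\varphi\ge\lambda\varphi\}$, so that for any $\kappa\in(0,\mu_{(u_d,0)})$, say $\kappa=\mu_{(u_d,0)}/2$, a strictly positive $\varphi\in\mathbb X$ with $\mathcal{L}\varphi\ge\kappa\varphi$ does exist — obtained, if necessary, from a slight positive perturbation of the kernel whose principal eigenvalue \emph{is} attained and whose spectral bound approximates $\mu_{(u_d,0)}$ from below. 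This is precisely the step where the absence of a principal eigenfunction is sidestepped. Finally, part (ii) follows verbatim after interchanging $(u_d,0)$, $\mathcal{K}$, $m$ with $(0,v_D)$, $\mathcal{P}$, $M$: one checks that $\underline v=(1+\delta)v_D$ renders the $v$-inequality automatic and constructs $\underline u=\eta\psi$ from a positive $\psi$ with $(d\mathcal{K}+[m-cv_D])\psi\ge\kappa'\psi$, $\kappa'>0$, guaranteed by $\nu_{(0,v_D)}>0$.
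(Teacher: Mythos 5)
Your proposal is correct and follows essentially the same route as the paper's (omitted) proof, which defers to \cite[Lemmas 2.3 and 2.5]{BaiLi2015}: one component is prescribed as $(1+\delta)$ times the semi-trivial state so that its inequality follows automatically from the single-species equation, and the other component is a small positive multiple of a positive function satisfying $\mathcal{L}\varphi\ge\kappa\varphi$ with $0<\kappa<\mu_{(u_d,0)}$ (resp. $\nu_{(0,v_D)}$), obtained from a suitably perturbed eigenvalue problem that does admit a principal eigenfunction. This is precisely the device the paper itself describes in the remark following Theorem \ref{thm-monotone}, so no gap remains beyond the routine verification you already carried out.
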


The proof of Lemma \ref{lm-upper-lower} is similar to that of  \cite[Lemmas 2.3 and 2.5]{BaiLi2015} and thus the details are omitted.

The following result explains how to characterize the global dynamics of the competition model (\ref{original}) with two semi-trivial steady states.

\begin{thm}\label{thm-monotone}
Assume that the assumptions \textbf{(C1)}, \textbf{(C2)} hold. Also assume that  system (\ref{original}) admits two semi-trivial steady states $(u_d, 0)$ and $(0,v_D)$. We have the following three possibilities:
\begin{itemize}
\item[(i)]   If both $\mu_{(u_d,0)}$ and $\nu_{(0, v_D)}$,  defined in (\ref{PEV}), are positive,  the system (\ref{original}) at least has one positive steady state in $L^{\infty}(\Omega)\times L^{\infty}(\Omega)$.    If in addition, assume that the system (\ref{original}) has a unique positive steady state in $\mathbb X\times \mathbb X$, then it is globally asymptotically stable relative to
     $\mathbb X_{++} \times \mathbb X_{++}$.
\item[(ii)]   If $\mu_{(u_d,0)}$ defined in (\ref{PEV}) is positive and no positive steady states of the system (\ref{original}) exist, then the semi-trivial steady state $(0,v_D)$ is globally asymptotically stable relative to
     $\mathbb X_{++} \times \mathbb X_{++}$.
\item[(iii)]   If $\nu_{(0, v_D)}$ defined in (\ref{PEV}) is positive and the system (\ref{original}) does not admit positive steady states, then the semi-trivial steady state $(u_d,0)$ is globally asymptotically stable relative to
     $\mathbb X_{++} \times \mathbb X_{++}$.
\end{itemize}
\end{thm}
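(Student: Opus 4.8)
The plan is to treat (\ref{original}) as a monotone (competitive) semiflow for the order $\leq_c$ and to reduce the three cases to the behaviour of the two extremal orbits issued from the sub-- and supersolutions supplied by Lemma \ref{lm-upper-lower}. Throughout I would use three ingredients: the comparison principle for competitive systems, so that $\leq_c$--ordered initial data stay ordered; the time--monotonicity of Lemma \ref{lm-monotone}; and the a priori bounds obtained by viewing each component as a subsolution of the corresponding single--species logistic equation. Indeed, since $cv,\,bu\geq 0$ one has $u_t\leq d\mathcal{K}[u]+u(m-u)$ and $v_t\leq D\mathcal{P}[v]+v(M-v)$, so by Theorem \ref{thm-single} and comparison $\limsup_{t\to\infty}u(\cdot,t)\leq u_d$ and $\limsup_{t\to\infty}v(\cdot,t)\leq v_D$ uniformly on $\bar\Omega$. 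Moreover the nonlocal strong positivity (guaranteed by $k(x,x),p(x,x)>0$) makes $u(\cdot,t),v(\cdot,t)$ strictly positive on $\bar\Omega$ for every $t>0$, hence bounded below by positive constants at each fixed time.

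For (i), when $\mu_{(u_d,0)},\nu_{(0,v_D)}>0$, I would take the supersolution $(\tilde u,\tilde v)=((1+\delta)u_d,\tilde v)$ and the subsolution $(\underline u,\underline v)=(\underline u,(1+\delta)v_D)$ from Lemma \ref{lm-upper-lower}; for $\varepsilon,\delta$ small these satisfy $(\underline u,\underline v)\leq_c(\tilde u,\tilde v)$. By Lemma \ref{lm-monotone} the orbit $\omega^+$ from $(\tilde u,\tilde v)$ is nonincreasing and the orbit $\omega^-$ from $(\underline u,\underline v)$ nondecreasing in $\leq_c$; both remain $\leq_c$--ordered and uniformly bounded, so they converge pointwise to steady states with $(\underline U,\underline V)\leq_c(\bar U,\bar V)$ of (\ref{original}) in $L^\infty$. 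The monotonicity forces $\bar V\geq \tilde v>0$ and $\underline U\geq \underline u>0$, whence $\bar U\geq \underline U>0$ and $(\bar U,\bar V)$ is a positive steady state; an argument as in the proof of Theorem \ref{thm-single} (cf. \cite[Page~434]{BZh}) upgrades it to $\mathbb X\times\mathbb X$ and, via Dini's theorem, the two convergences to uniform ones. This gives existence. Assuming now uniqueness, $(\bar U,\bar V)=(\underline U,\underline V)=:(U^*,V^*)$. For arbitrary $(u_0,v_0)\in\mathbb X_{++}\times\mathbb X_{++}$ I would pick $t_0$ large so that the $\limsup$ bounds place $(u(t_0),v(t_0))$ below $(\tilde u,\tilde v)$ and above $(\underline u,\underline v)$ in $\leq_c$ (using positivity to take $\tilde v<\min_{\bar\Omega}v(\cdot,t_0)$ and $\underline u<\min_{\bar\Omega}u(\cdot,t_0)$), and then sandwich $\omega^-(t-t_0)\leq_c(u(t),v(t))\leq_c\omega^+(t-t_0)$ for $t\geq t_0$; letting $t\to\infty$ and using the uniform convergence of both bounds to $(U^*,V^*)$ yields global convergence in $\mathbb X\times\mathbb X$.

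For (ii), with $\mu_{(u_d,0)}>0$ and no positive steady state, the same nonincreasing orbit $\omega^+$ from $(\tilde u,\tilde v)$ converges to a steady state $(\bar U,\bar V)$ with $\bar V\geq\tilde v>0$; since a nonnegative nontrivial steady state is strictly positive by strong positivity, $\bar U\not\equiv 0$ would produce a positive steady state, which is excluded, so $\bar U\equiv 0$ and then $\bar V=v_D$ by the uniqueness in Theorem \ref{thm-single}. Sandwiching an arbitrary positive solution below $(\tilde u,\tilde v)$ at some large $t_0$ gives $u(t)\leq\omega^+_u(t-t_0)\to 0$ and $v(t)\geq\omega^+_v(t-t_0)\to v_D$, which together with $\limsup_{t\to\infty}v\leq v_D$ forces $(u,v)\to(0,v_D)$ uniformly. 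Case (iii) is identical after exchanging the roles of the two species, using the nondecreasing orbit issued from the subsolution $(\underline u,(1+\delta)v_D)$ of Lemma \ref{lm-upper-lower}(ii).

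The main obstacle is precisely the absence of compactness and regularity stressed in the introduction: the extremal sub/super solutions are a priori only in $L^\infty$ and the convergences are only pointwise. The crucial points are therefore (a) upgrading the extremal steady states to $C(\bar\Omega)$ by the bootstrap of \cite{BZh}, which is what legitimises Dini's theorem and hence the passage from pointwise to uniform (i.e.\ $\mathbb X$) convergence, and (b) establishing the strict positivity needed to trap a general orbit between $\omega^-$ and $\omega^+$ in the competitive order. Everything else is the standard monotone--dynamical--systems squeeze.
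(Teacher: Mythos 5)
Your overall strategy --- monotone orbits issued from the upper/lower solutions of Lemma \ref{lm-upper-lower}, pointwise convergence of these orbits to extremal steady states, and the competitive-order squeeze of an arbitrary orbit --- is exactly the scheme behind the paper's proof (which is given by reference to \cite[Theorem 2.1]{BaiLi2015}), and your treatment of cases (ii) and (iii) is sound: there the monotone limits are $(0,v_D)$ and $(u_d,0)$, which are continuous, so Dini's theorem legitimately converts pointwise monotone convergence into convergence in $\mathbb X\times\mathbb X$. The genuine gap is in case (i), at the step where you claim that ``an argument as in the proof of Theorem \ref{thm-single} (cf.\ \cite[Page 434]{BZh}) upgrades'' the extremal $L^\infty$ steady state to $\mathbb X\times\mathbb X$. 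That bootstrap is intrinsically a single-equation argument: for the scalar logistic steady state one solves the quadratic
$u^2+u\bigl(d\,a_d(x)-m(x)\bigr)-d\int_\Omega k(x,y)u(y)\,dy=0$
for $u$ and reads off continuity because the nonlocal term is automatically continuous in $x$. For the coupled system this is circular: solving the $u$-equation expresses $\bar U(x)$ as a continuous function of $x$ and of $\bar V(x)$, while solving the $v$-equation expresses $\bar V(x)$ through $\bar U(x)$, so the continuity of each component presupposes that of the other. This is precisely why the paper isolates Lemma \ref{lm-continuous}, whose proof (from \cite[Lemma 4.1]{HNShen2012}) needs the extra hypothesis $bc\leq 1$ to close this loop, and why Theorem \ref{thm-monotone}(i) is stated so as to assert existence only in $L^{\infty}(\Omega)\times L^{\infty}(\Omega)$. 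Since Theorem \ref{thm-monotone} assumes only \textbf{(C1)}, \textbf{(C2)} --- no condition on $bc$ --- you have no tool available to make the extremal steady states continuous.

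The gap is not cosmetic, because your proof of the conditional global stability in (i) proceeds by identifying the two extremal limits with the unique positive steady state in $\mathbb X\times\mathbb X$: if those limits are merely bounded measurable functions, uniqueness among \emph{continuous} steady states says nothing about them, and both the identification $(\bar U,\bar V)=(\underline U,\underline V)=(U^*,V^*)$ and the subsequent application of Dini's theorem collapse. Two repairs are possible: either read (and use) the uniqueness hypothesis as uniqueness among all bounded positive steady states, in which case the extremal limits coincide at once with the unique steady state, which lies in $\mathbb X\times\mathbb X$, and Dini applies; or run the argument under the additional hypothesis $bc\leq 1$ and invoke Lemma \ref{lm-continuous} in place of the scalar bootstrap --- which is exactly how the paper operates when it combines Theorem \ref{thm-monotone} with Lemma \ref{lm-continuous} in the proof of Theorem \ref{thm-main}(i). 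A smaller omission: ``globally asymptotically stable'' requires Lyapunov stability in addition to global attraction; in this monotone setting it follows from the same construction with shrinking order intervals (letting $\varepsilon,\delta\to0$ in Lemma \ref{lm-upper-lower}), but it should be stated.
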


\begin{proof}
The arguments are almost the same as that of \cite[Theorem 2.1]{BaiLi2015}, where a simplified nonlocal operator is considered.
\end{proof}

It is routine to verify that Theorem \ref{thm-monotone} also holds for the system (\ref{general-mix-N}). Indeed, one sees from the proof of Theorem \ref{thm-monotone} that for models with only nonlocal dispersals, $\mu_{(u_d,0)}$ and $\nu_{(0, v_D)}$ might not be principal eigenvalues, thus the constructions of upper/lower solutions rely on the principal eigenfunctions of suitably perturbed eigenvalue problems which admit principal eigenvalues. However, when local diffusion is incorporated, the existence of principal eigenvalues is always guaranteed, which   makes the arguments standard.

It is worth pointing out that the proof of Theorem \ref{thm-monotone}(i) relies on the upper/lower solution method and  this method can only indicate the existence of positive steady state, denoted by $(u,v)$, in $L^{\infty}(\Omega)\times L^{\infty}(\Omega)$. However, according to the assumptions \textbf{(C1)}, \textbf{(C2)}, the optimal regularity should be $(u,v)\in \mathbb X\times \mathbb X $.
A natural question is when this could be true.  The following lemma provides a partial answer,
which is very important for this paper.

\begin{lem}\label{lm-continuous}
Assume that the assumptions \textbf{(C1)}, \textbf{(C2)} hold. If $bc\leq 1$, then any positive steady state of (\ref{original}) in $L^{\infty}(\Omega)\times L^{\infty}(\Omega)$ belongs to  $\mathbb X\times \mathbb X$.
\end{lem}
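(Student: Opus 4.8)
The plan is to localize the problem: the nonlocal parts of the steady-state equations are automatically continuous, so the only obstruction to continuity of $(u,v)$ is the algebraic (zeroth-order) part, which I will control pointwise. Writing the operators as in (\ref{K-kernel})--(\ref{P-kernel}) and setting $P(x):=d\,a_d(x)-m(x)$ and $Q(x):=D\,a_D(x)-M(x)$, I move the local terms to one side so that the steady-state equations become, for a.e.\ $x\in\Omega$,
\begin{align*}
u(x)\big[P(x)+u(x)+c\,v(x)\big] &= d\int_\Omega k(x,y)u(y)\,dy =: N_1(x),\\
v(x)\big[Q(x)+b\,u(x)+v(x)\big] &= D\int_\Omega p(x,y)v(y)\,dy =: N_2(x).
\end{align*}
No division is used, so these identities hold wherever the steady-state equations do.

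Since $k,p$ are continuous on the compact set $\bar\Omega\times\bar\Omega$ and $u,v\in L^\infty(\Omega)$, the convolution terms $N_1,N_2$ are continuous on $\bar\Omega$; because $u,v>0$ a.e.\ and $k(x,\cdot)$, $p(x,\cdot)$ are positive on a neighborhood of $x$ (as $k(x,x),p(x,x)>0$ and $k,p$ are continuous), one gets $N_1(x),N_2(x)>0$ for every $x\in\bar\Omega$, hence $\min_{\bar\Omega}N_1,\min_{\bar\Omega}N_2>0$; and $P,Q\in\mathbb X$. Thus for a.e.\ $x$ the pair $(u(x),v(x))$ is a positive solution of the algebraic system
\begin{equation*}
U\big[P+U+cV\big]=N_1,\qquad V\big[Q+bU+V\big]=N_2, \tag{$\star$}
\end{equation*}
with continuous coefficients $(P,Q,N_1,N_2)=(P(x),Q(x),N_1(x),N_2(x))$ and $N_1,N_2>0$. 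The key claim is that for $0<bc\le1$ and any $(P,Q)\in\mathbb R^2$, $N_1,N_2>0$, the system $(\star)$ has a \emph{unique} positive solution $(U,V)$ depending \emph{continuously} on $(P,Q,N_1,N_2)$. Granting this, let $\Phi=(\Phi_1,\Phi_2)$ be the map sending $x$ to the unique positive solution of $(\star)$; it is continuous on $\bar\Omega$ and satisfies $(u,v)=\Phi$ a.e. Replacing $u,v$ by $\Phi_1,\Phi_2$ does not change $N_1,N_2$, so $(\Phi_1,\Phi_2)$ is a continuous steady state coinciding a.e.\ with $(u,v)$, giving $(u,v)\in\mathbb X\times\mathbb X$.

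To prove the claim I solve each equation for its unique positive root: $(\star)_1$ gives $U=F(V):=\tfrac12\big(-(P+cV)+\sqrt{(P+cV)^2+4N_1}\big)$ and $(\star)_2$ gives $V=G(U):=\tfrac12\big(-(Q+bU)+\sqrt{(Q+bU)^2+4N_2}\big)$, both smooth, strictly positive, and strictly decreasing. Substituting $U=F(V)$ back into $(\star)_1$ yields the clean identity $|F'(V)|=cU^2/(U^2+N_1)<c$, and symmetrically $|G'(U)|=bV^2/(V^2+N_2)<b$, so the composite $F\circ G$ is increasing with $0<(F\circ G)'(U)=|F'(G(U))|\,|G'(U)|<bc\le1$. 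Hence $\Psi(U):=F(G(U))-U$ is strictly decreasing, with $\Psi(0)=F(G(0))>0$ and $\Psi(U)\to-\infty$ as $U\to\infty$ (since $G(U)\to0$, so $F(G(U))\to F(0)<\infty$); therefore $\Psi$ has a unique zero $U^\ast>0$, and $V^\ast:=G(U^\ast)$ gives the unique positive solution. Continuous dependence on $(P,Q,N_1,N_2)$ follows from the implicit function theorem, since $\Psi$ is $C^1$ in all variables on $\{N_1,N_2>0\}$ and $\partial_U\Psi(U^\ast)=\Psi'(U^\ast)<0$.

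I expect the main obstacle to be precisely this algebraic claim, and in particular pinning down the role of $0<bc\le1$: it is exactly the condition forcing $(F\circ G)'<1$, which simultaneously yields strict monotonicity of $\Psi$ (hence uniqueness) and the nonvanishing derivative needed for the implicit function theorem (hence continuous dependence). Everything else---continuity and strict positivity of the convolution terms $N_1,N_2$, and upgrading an a.e.\ identity to genuine membership in $C(\bar\Omega)$---is routine once the coefficients of $(\star)$ are recognized as continuous.
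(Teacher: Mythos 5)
Your proposal is correct, and it is essentially a self-contained reconstruction of what the paper leaves implicit: the paper's own ``proof'' of this lemma is a one-line citation to \cite[Lemma 4.1]{HNShen2012} together with the remark that the case $bc=1$ ``can be handled similarly.'' Your argument supplies the actual mechanism in the same spirit as that reference---reduce to the pointwise $2\times 2$ algebraic system with continuous coefficients $(P,Q,N_1,N_2)$, prove uniqueness of its positive solution, and upgrade to continuity via continuous dependence---and all the key computations check out: the identity $F'(V)=-cU^2/(U^2+N_1)$ follows from implicit differentiation of $U^2+(P+cV)U-N_1=0$ using $2U+P+cV=(U^2+N_1)/U>0$, so $|(F\circ G)'|<bc\le 1$ strictly (strictness coming from $N_1,N_2>0$, which you correctly derive from $k(x,x),p(x,x)>0$ and a.e.\ positivity of $u,v$), and this single strict inequality simultaneously gives uniqueness of the zero of $\Psi$ and the nonvanishing $\partial_U\Psi$ needed for the implicit function theorem. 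A notable advantage of your write-up over the paper's treatment is that the borderline case $bc=1$ requires no separate discussion at all: the strictness is carried by $U^2/(U^2+N_1)<1$ rather than by $bc<1$, which is precisely the point the paper waves away. The only stylistic caveat is the usual $L^\infty$ bookkeeping---your conclusion is that $(u,v)$ has a continuous representative (and, if the steady-state equations hold at every point rather than a.e., that $(u,v)$ itself equals the continuous solution map $\Phi$ everywhere)---which is the intended meaning of the lemma and is exactly how it is used in the monotone-iteration argument of Theorem \ref{thm-main}.
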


\begin{proof}
It follows from the proof of  \cite[Lemma 4.1]{HNShen2012}. Note that in \cite[Lemma 4.1]{HNShen2012}, it is assumed that $bc<1$. However, $bc=1$ can be handled similarly.
\end{proof}

\section{Proof of Theorem \ref{thm-main}}
To better demonstrate the proof of Theorem \ref{thm-main}, some properties of local stability and positive steady states of (\ref{original}) will be analyzed first.

The following result is about the classification of local stability.

\begin{prop}\label{prop-localstability}
Assume that \textbf{(C1)}-\textbf{(C3)} hold and $0<bc\leq 1$. Then there exist exactly four alternatives as follows.
\begin{itemize}
\item[(i)] $\mu_{(u_d,0)}>0$,  $\nu_{(0, v_D)}>0$;
\item[(ii)] $\mu_{(u_d,0)}>0$,  $\nu_{(0, v_D)}\leq 0$;
\item[(iii)] $\mu_{(u_d,0)}\leq 0$,  $\nu_{(0, v_D)}>0$;
\item[(iv)] $\mu_{(u_d,0)}= \nu_{(0, v_D)}=0$.
\end{itemize}
Moreover,  $(iv)$  holds if and only if $bc=1$ and $bu_d= v_D$.
\end{prop}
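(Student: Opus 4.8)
The plan is to exploit the self-adjointness of the linearized nonlocal operators granted by the symmetry assumption \textbf{(C3)}, and to use the two semi-trivial steady states themselves as eigenfunctions and test functions in the variational characterization of the spectral bound. Throughout I abbreviate $\mu=\mu_{(u_d,0)}$ and $\nu=\nu_{(0,v_D)}$. First I would record the structural facts. Since $k,p$ are symmetric and continuous on the compact set $\bar\Omega\times\bar\Omega$, each operator $d\mathcal{K}+q$ (resp.\ $D\mathcal{P}+q$) is a bounded self-adjoint operator on $L^2(\Omega)$: the integral part is Hilbert--Schmidt and self-adjoint, the remainder is a bounded multiplication. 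Hence $\sup\{\mathrm{Re}\,\lambda:\lambda\in\sigma(d\mathcal{K}+q)\}=\sup_{\|\phi\|_{L^2}=1}\langle(d\mathcal{K}+q)\phi,\phi\rangle$, valid whether or not the bound is attained, and the symmetry identity $\langle(d\mathcal{K}+q)\phi,\phi\rangle=-\tfrac d2\iint k(x,y)(\phi(x)-\phi(y))^2\,dx\,dy+\int_\Omega q\phi^2$ makes each Rayleigh quotient a legitimate lower bound for the spectral bound. Moreover the equations for the semi-trivial states say precisely that $u_d>0$ is a zero-eigenfunction of $d\mathcal{K}+[m-u_d]$ and $v_D>0$ is a zero-eigenfunction of $D\mathcal{P}+[M-v_D]$; by the principal-eigenvalue theory for such positive nonlocal operators (as in \cite{LiCovilleWang,BZh}) these eigenvalues $0$ are exactly the corresponding spectral bounds.

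The converse half of the final assertion is then immediate: if $bc=1$ and $bu_d=v_D$, then $m-cv_D=m-cbu_d=m-u_d$ and $M-bu_d=M-v_D$, so the operators defining $\nu$ and $\mu$ are exactly those having the positive zero-eigenfunctions $u_d$ and $v_D$; hence $\nu=\mu=0$ and case (iv) holds.

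The heart of the proof, and the main obstacle, is the rigidity statement: whenever $\mu\le0$ and $\nu\le0$ one necessarily has $bc=1$ and $v_D=bu_d$ (and therefore $\mu=\nu=0$). This single fact excludes every sign pattern of $(\mu,\nu)$ with one entry strictly negative and the other $\le0$, so that (i)--(iii) together with (iv) exhaust all possibilities, and it also supplies the forward direction of the iff. To prove it I would test the operator for $\nu$ against $u_d$ and the operator for $\mu$ against $v_D$. Subtracting from each Rayleigh quotient the (vanishing) Rayleigh quotient of the associated zero-eigenoperator cancels the nonlocal quadratic form together with the common $m$- and $M$-potentials, leaving
$$\nu\ge\frac{\int_\Omega(u_d-cv_D)u_d^2}{\|u_d\|_{L^2}^2},\qquad \mu\ge\frac{\int_\Omega(v_D-bu_d)v_D^2}{\|v_D\|_{L^2}^2}.$$
Thus $\mu,\nu\le0$ force $\int_\Omega u_d^3\le c\int_\Omega v_Du_d^2$ and $\int_\Omega v_D^3\le b\int_\Omega u_dv_D^2$. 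Applying H\"older's inequality with exponents $3$ and $3/2$ to the two right-hand sides yields $\int_\Omega u_d^3\le c^3\int_\Omega v_D^3$ and $\int_\Omega v_D^3\le b^3\int_\Omega u_d^3$, and chaining them gives $\int_\Omega u_d^3\le(bc)^3\int_\Omega u_d^3$. Since $u_d>0$, this forces $bc\ge1$; combined with the hypothesis $bc\le1$ it gives $bc=1$ and collapses the whole chain to equalities.

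Finally I would extract the remaining rigidity from the equality cases. Equality in H\"older forces $v_D=\kappa u_d$ for a constant $\kappa>0$, and feeding this into the now-equality relation $\int_\Omega(u_d-cv_D)u_d^2=0$ gives $c\kappa=1$, i.e.\ $\kappa=1/c=b$, so $v_D=bu_d$; this proves the rigidity statement and hence both the exhaustiveness of (i)--(iv) and the forward direction of the iff. The delicate points are the analysis of the equality case in H\"older's inequality and the justification that the variational lower bounds remain valid when the spectral bound is not an actual eigenvalue, which is exactly where self-adjointness, and thus \textbf{(C3)}, is indispensable; the non-symmetric case would leave these Rayleigh-quotient comparisons unavailable.
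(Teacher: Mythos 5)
Your proposal is correct, and its skeleton coincides with the paper's: both arguments use the $L^2$ Rayleigh-quotient characterization of the spectral bounds, test the operator defining $\nu_{(0,v_D)}$ with $u_d$ and the operator defining $\mu_{(u_d,0)}$ with $v_D$, and use the steady-state equations (\ref{singled}), (\ref{singleD}) to cancel the nonlocal quadratic forms, arriving at exactly the paper's two inequalities $\int_\Omega (u_d^3-cv_Du_d^2)\,dx\le 0$ and $\int_\Omega (v_D^3-bu_dv_D^2)\,dx\le 0$; the converse implication ($bc=1$, $bu_d=v_D$ implies (iv)) is also handled identically. Where you genuinely diverge is the rigidity step. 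The paper multiplies the first inequality by $b^3$, uses $bc\le 1$, and adds the result to the second to produce $\int_\Omega (bu_d-v_D)^2(bu_d+v_D)\,dx\le 0$, a pointwise completion-of-square that forces $bu_d=v_D$ everywhere, $bc=1$, and equality throughout in one stroke. You instead decouple the mixed integrals via H\"older's inequality with exponents $3$ and $3/2$, chain the decoupled inequalities to get $\int_\Omega u_d^3\,dx\le (bc)^3\int_\Omega u_d^3\,dx$ and hence $bc\ge 1$ (so $bc=1$), and only then extract $v_D=\kappa u_d$ from the equality case of H\"older, pinning $\kappa=b$ via the equality $\int_\Omega (u_d-cv_D)u_d^2\,dx=0$. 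Both finishes are valid. The paper's identity is more economical and is reused verbatim in the variable-coefficient setting of Proposition \ref{prop-localstability-mixed} (after bounding $b(x),c(x),b_1(x),c_1(x)$ by their extrema); your H\"older route has the merit of separating the two conclusions and of making transparent that $\mu,\nu\le 0$ by itself forces $bc\ge 1$, so the hypothesis $0<bc\le 1$ enters only at the last moment. One small caveat, which applies equally to the paper: both arguments implicitly identify the spectral bound of the linearized operator acting on $C(\bar\Omega)$ (as in (\ref{PEV})) with the supremum of $L^2$ Rayleigh quotients; your self-adjointness remarks justify the $L^2$ side, and the identification of the two spectral bounds is precisely where the spectral theory of \cite{LiCovilleWang} is invoked.
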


\begin{proof}
It suffice to show that when $\mu_{(u_d,0)}\leq 0$,  $\nu_{(0, v_D)}\leq 0$, that is, none of (i)-(iii) is valid, we have $\mu_{(u_d,0)}= \nu_{(0, v_D)}=0$, and furthermore $bc=1$ and $b u_d= v_D$.

Note that
$$
\mu_{(u_d,0)}= \sup_{0 \neq \psi\in L^2} \frac{\int_{\Omega} \left( D\psi \mathcal{P}[\psi]+[M(x)-bu_d]\psi^2 \right) dx }{\int_{\Omega} \psi^2 dx}\leq 0.
$$
Thus one sees that
$$
\frac{\int_{\Omega} \left( Dv_D \mathcal{P}[v_D]+[M(x)-bu_d]v_D^2 \right) dx}{\int_{\Omega} v_D^2 dx}\leq \mu_{(u_d,0)}\leq 0,
$$
and thus, due to (\ref{singleD}), it follows that
\begin{equation}\label{pf-localstability1}
\int_{\Omega} \left( v_D^3  -bu_dv_D^2 \right) dx\leq 0.
\end{equation}
Similarly, $\nu_{(0, v_D)}\leq 0$ and (\ref{singled}) give that
\begin{equation}\label{pf-localstability2}
\int_{\Omega} \left( u_d^3  -cv_Du_d^2 \right) dx\leq 0.
\end{equation}

Now by multiplying (\ref{pf-localstability2}) by $b^3$ and using the condition $0<bc\leq 1$, we have
$$
\int_{\Omega} \left( (bu_d)^3  - v_D(bu_d)^2 \right) dx\leq \int_{\Omega} \left( (bu_d)^3  - bcv_D(bu_d)^2 \right) dx =\int_{\Omega} b^3 \left( u_d^3  - cv_D u_d^2 \right) dx \leq 0,
$$
which, together with (\ref{pf-localstability1}), implies that
\begin{equation}\label{pf-localstability-key}
\int_{\Omega}(bu_d-v_D)^2(bu_d+v_D)dx\leq 0.
\end{equation}
Therefore, all previous inequalities should be equalities. Hence it is obvious that $\mu_{(u_d,0)}= \nu_{(0, v_D)}=0$, $bc=1$ and $bu_d=v_D$.

At the end, if $bc=1$ and $bu_d=v_D$, then it is easy to check that $\mu_{(u_d,0)}= \nu_{(0, v_D)}=0$.
\end{proof}

The next results indicates that whenever there exist two ordered positive steady states,  there are  infinitely many positive steady states.
Our arguments rely on exploring characteristics of nonlocal operators, as well as some integral relations inspired by \cite{HeNi}.

\begin{prop}\label{prop-steadystates}
Assume that \textbf{(C1)}, \textbf{(C2)}, \textbf{(C3)} hold and $0<bc\leq 1$. Then (\ref{original}) admits two strictly ordered continuous positive steady states $(u,v)$ and $(u^*,v^*)$ (that is w.l.o.g., $u>u^*$, $v<v^*$) if and only if $bc=1$, $b u_d= v_D$. Moreover, all the positive steady states of (\ref{original}) consist of $(su_d, (1-s)v_D)$, $0<s<1$.
\end{prop}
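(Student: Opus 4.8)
\emph{Sufficiency and the continuum.} The plan is to dispatch the ``$\Leftarrow$'' direction by direct substitution. Assume $bc=1$ and $bu_d=v_D$; note these are equivalent to $u_d=cv_D$. For $0<s<1$ I would plug $(U,V)=(su_d,(1-s)v_D)$ into the steady-state form of (\ref{original}). Using (\ref{singled}) to replace $d\mathcal{K}[u_d]=-u_d(m-u_d)$, the first equation collapses to $su_d(1-s)(u_d-cv_D)=0$, which vanishes since $u_d=cv_D$; by (\ref{singleD}) the second reduces to $s(1-s)v_D(v_D-bu_d)=0$, which vanishes since $v_D=bu_d$. Hence each $(su_d,(1-s)v_D)$ is a positive steady state, and any $s_1>s_2$ yield a strictly ordered pair. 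This also exhibits the claimed continuum.

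\emph{The main tool.} For the converse I would use a ground-state (Picone-type) inequality for symmetric nonlocal operators, which replaces the linearized/eigenfunction analysis that is unavailable here. For $w\in\mathbb X$ with $w>0$ and any $\phi\in\mathbb X$, the symmetry of $k$ together with $a_d(x)=\int_\Omega k(x,y)\,dy$ (see (\ref{K-kernel})) gives, after symmetrizing the double integral,
\[
\int_\Omega\frac{\mathcal{K}[w]}{w}\phi^2\,dx-\int_\Omega\phi\,\mathcal{K}[\phi]\,dx=\frac12\int_\Omega\int_\Omega k(x,y)\Big(\phi(x)\sqrt{\tfrac{w(y)}{w(x)}}-\phi(y)\sqrt{\tfrac{w(x)}{w(y)}}\Big)^2dx\,dy\ge0,
\]
with equality if and only if $\phi/w$ is constant (continuity and $k(x,x)>0$ force the pointwise ratio condition to propagate over $\bar\Omega$). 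The same holds for $\mathcal{P}$. By Lemma \ref{lm-continuous} both ordered steady states lie in $\mathbb X\times\mathbb X$, so the inequality applies.

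\emph{Deriving $bc=1$ and rigidity.} Writing the steady-state relations as $d\mathcal{K}[u]/u=u+cv-m$ and $D\mathcal{P}[v]/v=v+bu-M$ (and likewise for $(u^*,v^*)$), I would apply the inequality with the four choices $(w,\phi)\in\{(u,u^*),(u^*,u),(v,v^*),(v^*,v)\}$. Substituting the relations turns these into integral inequalities; subtracting the two $u$-inequalities and the two $v$-inequalities, with $\xi=u-u^*>0$, $\eta=v^*-v>0$, $F=\xi-c\eta$, $G=b\xi-\eta$, yields
\[
\int_\Omega F\,\xi\,(u+u^*)\,dx\le0,\qquad \int_\Omega G\,\eta\,(v+v^*)\,dx\ge0 .
\]
Now $bc\le1$ gives the pointwise identities $F-cG=(1-bc)\xi\ge0$, $bF-G=(1-bc)\eta\ge0$, and the bound $b\xi^2-2bc\xi\eta+c\eta^2\ge(\sqrt{b}\,\xi-\sqrt{c}\,\eta)^2\ge0$, strict when $bc<1$. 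Forming $b\cdot(\text{first})-c\cdot(\text{second})$ then places a nonnegative integrand under an integral constrained to be nonpositive; once the weight discrepancy described below is resolved, this forces $bc=1$ and, through the equality cases, constancy of $u/u^*$ and $v/v^*$. With $bc=1$ and these ratios fixed, the same rigidity comparing a positive steady state with $u_d$ in the $u$-equation forces $u/u_d$ to be constant, and the plug-in computation $cv=(1-s)u_d$ then recovers $bu_d=v_D$ and the form $(su_d,(1-s)v_D)$, showing moreover that \emph{all} positive steady states lie on this segment.

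\emph{Main obstacle.} The crux is that the two integral inequalities carry \emph{different} positive weights, $u+u^*$ from the $\mathcal{K}$-equation and $v+v^*$ from the $\mathcal{P}$-equation. The perfect-square mechanism that pins $bc=1$ closes cleanly only when the weights agree, so the technical heart is reconciling them. I expect to handle this through the strict-ordering sign structure: the four integral inequalities force each of $F$ and $G$ to change sign, while $F-cG\ge0$ and $bF-G\ge0$ impose the nesting $\{F\le0\}\subseteq\{G\le0\}$ and $\{G\ge0\}\subseteq\{F\ge0\}$, and it is this geometry, rather than a naive weighted combination, that must be exploited to produce the contradiction when $bc<1$. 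A secondary but important point is that the entire argument is integral and variational, so it never invokes compactness of orbits or the existence of a principal eigenvalue — which is exactly why it survives the loss of regularity intrinsic to the nonlocal model.
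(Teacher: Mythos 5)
Your sufficiency direction, your nonlocal Picone identity, and the four integral inequalities you extract from it are all correct (so are the pointwise relations $F-cG=(1-bc)\xi$, $bF-G=(1-bc)\eta$). But the proof has a genuine gap exactly where you flag it: the step that is supposed to force $bc=1$ and $F\equiv 0$ is never carried out. ``I expect to handle this through the strict-ordering sign structure \dots it is this geometry that must be exploited'' is a research plan, not an argument, and it sits at the heart of the proposition; everything after it (rigidity, identification of the segment $(su_d,(1-s)v_D)$) depends on it. Worse, the gap cannot be closed by rearranging the inequalities you have. Under \textbf{(C3)} one checks that $\int_\Omega F\,uu^*\,dx = d\int_\Omega\bigl(u^*\mathcal{K}[u]-u\mathcal{K}[u^*]\bigr)dx=0$, so the paper's key estimate is equivalent to $\int_\Omega F\,u^2\,dx+\int_\Omega F\,(u^*)^2\,dx\le 0$, whereas your two Picone inequalities only give $\int_\Omega F\,u^2\,dx\le 0\le \int_\Omega F\,(u^*)^2\,dx$; the sum of these two quantities is not sign-controlled by your information, since you have no upper bound on $\int_\Omega F\,(u^*)^2\,dx$. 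In the language of multipliers: your two applications of Picone test the identity $d\bigl(u^*\mathcal{K}[u]-u\mathcal{K}[u^*]\bigr)=uu^*F$ against $u^*/u$ and $u/u^*$ separately, while what is needed is the single multiplier $u/u^*+u^*/u-2=(u-u^*)^2/(uu^*)$, and the sign conclusion for the latter is strictly stronger than any combination of the former two.

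That multiplier is precisely what the paper uses, and it is what dissolves your ``weight discrepancy.'' Since $t\mapsto t+1/t$ is decreasing on $(0,1)$ and $u^*/u\in(0,1)$ by the ordering, the symmetrized double integral with this multiplier has a definite sign, giving $\int_\Omega F\,w^2\,dx\le 0$ and $\int_\Omega G\,z^2\,dx\ge 0$ with the \emph{matched} weights $w^2$ and $z^2$ ($w=\xi$, $z=-\eta$). These matched weights admit the algebraic factorization that your weights $\xi(u+u^*)$ and $\eta(v+v^*)$ destroy: multiplying the $G$-inequality by $c^3$, using $cG\le F$ (i.e.\ $bc\le 1$) and $(cz)^2-w^2=(cz-w)(cz+w)=(cz-w)F$, one gets
\[
0\le c^3\!\int_\Omega G\,z^2\,dx-\int_\Omega F\,w^2\,dx\le \int_\Omega F^2\,(cz-w)\,dx\le 0,
\]
because $cz-w<0$ pointwise. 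Hence every inequality is an equality, which forces $F\equiv 0$, then $G\equiv 0$, then $bc=1$. If you replace your two Picone pairs by this single test function (it fits inside your own symmetrization framework, so nothing else in your outline changes), the rest of your plan — equality cases giving $u^*=\alpha u$, $v^*=\beta v$, then comparison with $u_d$, $v_D$ to get $bu_d=v_D$ and the segment structure — goes through essentially as in the paper.
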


\begin{proof}
If $bc=1$, $b u_d= v_D$, it is routine to check that all the positive steady states of (\ref{original}) consist of $(su_d, (1-s)v_D)$, $0<s<1$, which implies (\ref{original}) admits two strictly ordered continuous positive steady states.

Now suppose that (\ref{original}) admits two different positive steady states $(u,v)$ and $(u^*,v^*)$, w.l.o.g., $u>u^*$, $v<v^*$.
We will show that  $bc=1$, $b u_d= v_D$ is valid.

First, set $w= u- u^*>0$ and $z=v-v^*<0$ and it is standard to check that
\begin{equation}\label{pf-equation-w-z}
\begin{cases}
d\mathcal{K}[w] +(m-u-cv)w-u^*w-cu^*z=0,\\
D\mathcal{P}[z] + (M-bu-v) z- bv^*w -v^* z=0.
\end{cases}
\end{equation}
Using the equation satisfied by $u$, one has
$$
d\left(u \mathcal{K}[w] - w\mathcal{K}[u]  \right) =u u^* (w+cz).
$$
This yields that
\begin{equation}\label{pf-nonpositive}
d\int_{\Omega}\left(-u \mathcal{K}[u^*] + u^* \mathcal{K}[u]  \right) \frac{w^2}{u u^* } dx =  \int_{\Omega}(w+cz)w^2 dx.
\end{equation}
We claim that $\int_{\Omega}(w+cz)w^2 dx\leq 0$.

To prove this claim, let us calculate the left hand side of (\ref{pf-nonpositive}). Note that assumption \textbf{(C3)}, i.e. $k(x,y)$ is symmetric, is important in the following computations.
\begin{eqnarray}\label{pf-exchange-xy-1}
&& d\int_{\Omega}\left(-u \mathcal{K}[u^*] + u^* \mathcal{K}[u]  \right) \frac{w^2}{u u^* } dx \cr
&=& d\int_{\Omega}\int_{\Omega} k(x,y)\left[ u^*(x)u(y)-u(x)u^*(y) \right] \frac{(u(x)-u^*(x))^2}{u(x) u^*(x) } dy dx\cr
&=&  d\int_{\Omega}\int_{\Omega} k(x,y)\left[ u^*(x)u(y)-u(x)u^*(y) \right]\left( \frac{u(x)}{u^*(x) }+\frac{u^*(x)}{u(x) } \right) dy dx,
\end{eqnarray}
where $\int_{\Omega}\int_{\Omega} k(x,y)\left[ u^*(x)u(y)-u(x)u^*(y) \right] dydx =0$ is used. By exchanging $x$ and $y$, we have
\begin{eqnarray}\label{pf-exchange-xy-2}
&& d\int_{\Omega}\left(-u \mathcal{K}[u^*] + u^* \mathcal{K}[u]  \right) \frac{w^2}{u u^* } dx \cr
&=&  d\int_{\Omega}\int_{\Omega} k(y,x)\left[ u^*(y)u(x)-u(y)u^*(x) \right]\left( \frac{u(y)}{u^*(y) }+\frac{u^*(y)}{u(y) } \right) dy dx.
\end{eqnarray}
Due to (\ref{pf-exchange-xy-1}) and (\ref{pf-exchange-xy-2}), one sees that
\begin{eqnarray*}
&& d\int_{\Omega}\left(-u \mathcal{K}[u^*] + u^* \mathcal{K}[u]  \right) \frac{w^2}{u u^* } dx \cr
&=&  {d\over 2}\int_{\Omega}\int_{\Omega} k(x,y)\left[ u^*(x)u(y)-u(x)u^*(y) \right]\left( \frac{u(x)}{u^*(x) }+\frac{u^*(x)}{u(x) } -\frac{u(y)}{u^*(y) }-\frac{u^*(y)}{u(y) } \right) dy dx\\
&=&  {d\over 2}\int_{\Omega}\int_{\Omega} k(x,y)\left[ u^*(x)u(y)-u(x)u^*(y) \right]^2\left(  \frac{1}{u(x) u(y) } -\frac{1}{u^*(x)u^*(y) }  \right) dy dx\\
&\leq & 0
\end{eqnarray*}
since $u>u^*$. The claim is proved, i.e., $\int_{\Omega}(w+cz)w^2 dx\leq 0$.

Similarly, using (\ref{pf-equation-w-z}) and the equation satisfied by $v$, we have
$$
D\left(v \mathcal{P}[z] - z\mathcal{P}[v]  \right) =v v^* (bw+z),
$$
which gives that
$$
D\int_{\Omega}\left(-v \mathcal{P}[v^*] + v^*\mathcal{P}[v]  \right) \frac{z^2}{v v^* } dx =  \int_{\Omega}(bw+z)z^2 dx.
$$
Similar to the proof of the previous claim, we obtain
\begin{eqnarray}\label{pf-key}
&& \int_{\Omega}(bw+z)z^2 dx\cr
&=&D\int_{\Omega}\left(-v \mathcal{P}[v^*] + v^*\mathcal{P}[v]  \right) \frac{z^2}{v v^* } dx\cr
&=&  {D\over 2}\int_{\Omega}\int_{\Omega} p(x,y)\left[ v^*(x)v(y)-v(x)v^*(y) \right]^2\left(  \frac{1}{v(x) v(y) } -\frac{1}{v^*(x)v^*(y) }  \right) dy dx\cr
&\geq & 0
\end{eqnarray}
since $v<v^*$.

Now we have derived two important inequalities:
\begin{equation}\label{important}
\int_{\Omega}(w+cz)w^2 dx\leq 0,\ \ \ \int_{\Omega}(bw+z)z^2 dx\geq 0.
\end{equation}
Multiplying the second one by $c^3$ and subtracting the first one, it follows that
\begin{eqnarray}\label{pf-keyrelation}
0 &\leq& \int_{\Omega}(cbw+cz)(cz)^2 dx -\int_{\Omega}(w+cz)w^2 dx\cr
&\leq &  \int_{\Omega}(w+cz)(cz)^2 dx -\int_{\Omega}(w+cz)w^2 dx\cr
&=& \int_{\Omega}(w+cz)^2(cz-w) dx,
\end{eqnarray}
where $bc\leq 1$ is used in the second inequality.
The assumption $w= u- u^*>0$ and $z=v-v^*<0$ indicates that $w+cz =0$ in $\bar\Omega$ and all the previous inequalities should be equalities. Hence we also have $bc=1$ and $bw+z =0$ (i.e., $w+cz =0$) in $\bar\Omega$.

Moreover, note that $w+cz =0$ is equivalent to $u+cv =u^* +cv^*$. Denote $R(x)=u+cv =u^* +cv^*$ for convenience.   According to the equation satisfied by $u$, $u^*$, one sees that   both  $u$ and $u^*$ are solutions of the same linear equation
$$
d \mathcal{K}[U]  + (m(x)-R(x)) U=0.
$$
Since both  $u$ and $u^*$ are positive functions in in $\mathbb X$, $u$ and $u^*$ can be regarded as the principal eigenfunctions of the nonlocal eigenvalue problem
$$
d \mathcal{K}[\phi]  +(m(x)-R(x))\phi=\lambda\phi
$$
with the principal eigenvalue being zero. It is proved in \cite{LiCovilleWang} that the principal eigenvalue is algebraically simple whenever it exists, which implies that  $u^*= \alpha u$, where $0<\alpha<1$. Similarly, it can be verified that $v^*= \beta v$, where $\beta>1$. Then using $u+cv =u^* +cv^*$ again, we have
$$
u=c{\beta-1\over 1-\alpha} v.
$$
Substitute this relation into the system satisfied by $(u,v)$, we have
\begin{equation*}
\begin{cases}
d \mathcal{K}[  v]  +  v(m(x)-c{\beta-\alpha\over 1-\alpha} v )=0, \\
D \mathcal{P}[v]  +v(M(x)- {\beta-\alpha\over 1-\alpha} v )=0,
\end{cases}
\end{equation*}
where $bc=1$ is used.
The uniqueness of positive steady state to single-species models (\ref{singled}) and (\ref{singleD}) implies that
$$
u_d= c{\beta-\alpha\over 1-\alpha} v,\ \ \ v_D = {\beta-\alpha\over 1-\alpha} v.
$$
Therefore, $bc=1$, $b u_d= v_D$ and all the positive steady states of (\ref{original}) consist of $(su_d, (1-s)v_D)$, $0<s<1$.
\end{proof}

Now we complete the proof of Theorem \ref{thm-main}  on the basis of Propositions \ref{prop-localstability} and \ref{prop-steadystates}.

\begin{proof}[Proof of Theorem \ref{thm-main}]
(i) According to Lemma \ref{lm-signs}, in this case, $\mu_{(u_d,0)}>0$,  $\nu_{(0, v_D)}>0$. Thus thanks to Theorems \ref{thm-monotone} and Lemma \ref{lm-continuous},    one sees that the system (\ref{original}) admits a   positive steady state $(u,v)\in \mathbb X\times \mathbb X$.

Again due to Theorems \ref{thm-monotone}, it suffices to verify the uniqueness of positive steady states.  Suppose that this is not true. Let $(u^*, v^*)$ denote a positive steady state of (\ref{original}) different from $(u,v)$. By Lemma \ref{lm-upper-lower}, there exist an upper solution $(\tilde{u}_0, \tilde{v}_0)$ and a lower solution $(\underline{u}_0, \underline{v}_0)$ of (\ref{original}) such that
$$
(\underline{u}_0, \underline{v}_0) <_c (u,v),\ (u^*, v^*) <_c (\tilde{u}_0, \tilde{v}_0).
$$
Then according to Lemma \ref{lm-monotone}, one sees that the solution of (\ref{original}) with initial value $(\underline{u}_0, \underline{v}_0)$ increases to a positive steady state of (\ref{original}) in $L^{\infty}(\Omega)\times L^{\infty}(\Omega)$, denoted by $(u_1,v_1)$, while the solution of (\ref{original}) with initial value $(\tilde{u}_0, \tilde{v}_0)$ decreases to a positive steady state of (\ref{original}) in $L^{\infty}(\Omega)\times L^{\infty}(\Omega)$, denoted by $(u_2,v_2)$. Thanks to Lemma \ref{lm-continuous}, one has $(u_1,v_1), (u_2,v_2) \in \mathbb X\times \mathbb X.$ Moreover, by comparison principle, it is routine to show that
$$
(u_1,v_1) \leq_c (u,v),\ (u^*, v^*) \leq_c (u_2,v_2).
$$
Therefore, Propositions \ref{prop-localstability} and \ref{prop-steadystates} indicate that
$$
(u_1,v_1) = (u,v)= (u^*, v^*) = (u_2,v_2).
$$
This is a contradiction.

(ii)  According to Theorem \ref{thm-monotone}, to prove that $(0,v_D)$ is globally asymptotically stable,  it suffices to show that (\ref{original}) admits no positive steady states.
    Suppose that (\ref{original}) admits a positive steady state $(u,v)$, i.e., $(u,v)$ satisfies
    \begin{equation*}
     \begin{cases}
    d \mathcal{K}[u]  +u(m(x)-u- cv)=0, \\
    D \mathcal{P}[v]  +v(M(x)-bu- v)=0.
    \end{cases}
    \end{equation*}
    Denote $(u^*, v^*)= (0,v_D)$ and set $w= u- u^*= u>0$, $z=v-v^*<0$. Similar to the computation of (\ref{pf-key}), one has
    \begin{eqnarray*}
 &&  \int_{\Omega}(bu+z)z^2 dx = \int_{\Omega}(bw+z)z^2 dx\cr
&=&  {D\over 2}\int_{\Omega}\int_{\Omega} p(x,y)\left[ v^*(x)v(y)-v(x)v^*(y) \right]^2\left(  \frac{1}{v(x) v(y) } -\frac{1}{v^*(x)v^*(y) }  \right) dy dx \geq  0.
    \end{eqnarray*}
    However,
    \begin{eqnarray*}
    0 &\geq& \nu_{(0, v_D)}= \sup_{0 \neq \phi\in L^2} \frac{\int_{\Omega} \left( d\phi \mathcal{K}[\phi]+[m(x)-cv_D]\phi^2 \right) dx }{\int_{\Omega} \phi^2 dx}\\
    &\geq &  \frac{\int_{\Omega} \left( d u \mathcal{K}[ u ]+[m(x)-cv_D]u^2 \right) dx }{\int_{\Omega} u^2 dx}\\
    &=&\frac{\int_{\Omega} \left( -[m(x)-u-cv]u^2+[m(x)-cv_D]u^2 \right) dx }{\int_{\Omega} u^2 dx}\\
    &=& \frac{\int_{\Omega} (u+cz)u^2   dx }{\int_{\Omega} u^2 dx}.
    \end{eqnarray*}
    Putting together the above two inequalities:
    \begin{equation}\label{important-2}
    \int_{\Omega}(bu+z)z^2 dx\geq 0,\ \ \ \int_{\Omega} (u+cz)u^2   dx \leq 0.
    \end{equation}
    similar to (\ref{pf-keyrelation}), we obtain
    $$
    \int_{\Omega} (u+cz)^2(cz-u)dx\geq 0,
    $$
    where $0<bc\leq 1$ is used. Hence $u+cz =0 $ in $\bar\Omega$ and all the previous inequalities should be equalities. In particular, $bc=1$ and $bu+z =0$. Note that $bu+z =0$ means $bu+v = v_D$. Then based on the equations satisfied by $v$ and $v_D$ respectively, it is routine to show that $v= \alpha v_D$, where $0<\alpha<1$. Thus, $u= c(1-\alpha)v_D$. Then plugging  $v= \alpha v_D$ and $u= c(1-\alpha)v_D$ into the equation satisfied by $u$, we have
    $$
    d c(1-\alpha) \mathcal{K}[v_D]  +c(1-\alpha)v_D (m(x)-c v_D)=0,
    $$
    which indicates that $u_d = c v_D$, i.e., $bu_d= v_D$. This yields a contradiction due to Proposition \ref{prop-localstability}.

(iii) is similar to the proof of case (ii), thus the details are omitted.
\end{proof}


\section{Proof of Theorem \ref{thm-main-2}}
Throughout this section, let $(u(x,t),v(x,t))$ denote a solution of the system (\ref{original}).
First of all, thanks to Proposition \ref{prop-localstability}, if both $(u_d, 0)$ and $(0,v_D)$ are locally stable or neutrally stable, then $bc=1$, $bu_d = v_D$ and thus it is routine to verify that   (\ref{original}) has a continuum of steady states $\{(su_d, (1-s)v_D),\ 0\leq s\leq 1\}$.
It remains to demonstrate the global convergence of solutions to the system (\ref{original}) for any nonnegative initial data $(u_0, v_0) \not\equiv (0,0)$.
The proof of  this part is quite involved and complicated.

Let us add some explanations here for the convenience of readers. If either $u_0 \equiv 0$ or $v_0  \equiv 0$, then
(\ref{original}) is reduced to a single-species model and thus it follows that the corresponding solution $(u(x,t), v(x,t))$ approaches to $(0, v_D)$ or $(u_d,0)$ respectively in $\mathbb X \times \mathbb X$. Now only consider initial data $(u_0, v_0) \in \mathbb X_{++} \times \mathbb X_{++}$. By comparison principle, we have $u(x,t)>0$ and $v(x,t)>0$ in $\bar\Omega$ for $t>0$.    Hence, for the rest of the proof, assume that  $u_0 > 0$, $v_0 > 0$ in $\bar\Omega$ and consider three cases separately:
\begin{itemize}
\item[{\it Case I:}] $u(x,t)$ does not weakly converge to zero  in $L^2(\Omega)$;
\item[{\it Case II:}] $v(x,t)$ does not weakly converge to zero  in $L^2(\Omega)$;
\item[{\it Case III:}] both $u(x,t)$ and $v(x,t)$ weakly converge to zero  in $L^2(\Omega)$.
\end{itemize}

The following property indicates how to initiate the proofs of {\it Cases I} and {\it II}.

\begin{prop}\label{prop-weakL2-nonzero}
Assume that \textbf{(C1)}, \textbf{(C2)}, \textbf{(C3)} hold.
\begin{itemize}
\item[(i)] If Case I holds, 
then there exists $T_1>0$ such that $v(x,t)<v_D(x)$ in $\bar\Omega$ for $t\geq T_1$.
\item[(ii)] If Case II holds, 
then there exists $T_2>0$ such that $u(x,t)<u_d(x)$ in $\bar\Omega$ for $t\geq T_2$.
\end{itemize}
\end{prop}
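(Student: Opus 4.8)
The plan is to prove (i); statement (ii) is obtained by interchanging the roles of the two species, using the identities $bc=1$ and $bu_d=v_D$ (equivalently $cv_D=u_d$) supplied by Proposition \ref{prop-localstability}. Throughout I use that $u(\cdot,t),v(\cdot,t)>0$ on $\bar\Omega$ for $t>0$. First I would build an upper barrier: since $-buv\le 0$, the $v$-equation shows that $v$ is a subsolution of the single-species problem (\ref{singleD}), so by the comparison principle and the global stability of $v_D$ from Theorem \ref{thm-single} one gets $\limsup_{t\to\infty}\|(v-v_D)_+\|_{\mathbb X}=0$; that is, for every $\varepsilon>0$ there is $T_\varepsilon$ with $v\le v_D+\varepsilon$ on $\bar\Omega$ for $t\ge T_\varepsilon$. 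This gives closeness but not the strict inequality, so the real work is to push $v$ strictly below $v_D$ and keep it there.

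Setting $\phi=v_D-v$ and using (\ref{singleD}), one checks
\[
\phi_t=D\mathcal{P}[\phi]+(M-v_D-v)\phi+buv,
\]
where the source $buv>0$ is precisely the competitive pressure; in other words $v_D$ is a \emph{strict} supersolution of the $v$-equation. With this the propagation step is routine: if $\phi(\cdot,T_0)\ge 0$ at some time $T_0$, then writing the nonlocal part as $D\int_\Omega p(x,y)\phi(y)\,dy\ge 0$ on $\{\phi\ge 0\}$ and absorbing the bounded zeroth-order coefficient by an integrating factor, the strictly positive source forces $\phi(x,t)>0$, i.e.\ $v<v_D$, for all $t>T_0$. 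Hence it suffices to produce a single time $T_0$ at which $v(\cdot,T_0)\le v_D$.

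The core quantitative step, and the place where \textbf{(C3)} is essential, is an energy inequality for the excess $g:=(v-v_D)_+$. Multiplying the $\phi$-equation by $-g$ and integrating, and using that by symmetry $D\mathcal{P}+[M-2v_D]$ is self-adjoint on $L^2(\Omega)$ with $\lambda^*:=\sup\{\textrm{Re}\,\lambda\,|\,\lambda\in\sigma(D\mathcal{P}+[M-2v_D])\}\le-\min_{\bar\Omega}v_D<0$ (a strict drop from the value $0$ attained by $D\mathcal{P}+[M-v_D]$ on its positive eigenfunction $v_D$, as in \cite{LiCovilleWang}), I expect to obtain
\[
\frac{d}{dt}\int_\Omega g^2\,dx\le 2\lambda^*\int_\Omega g^2\,dx-2b\int_\Omega uvg\,dx\le 2\lambda^*\int_\Omega g^2\,dx,
\]
since on $\{v>v_D\}$ one has $M-v_D-v\le M-2v_D$, and the $\mathcal{P}$-cross terms between the disjoint supports of $g$ and $\phi_+$ carry a favorable sign. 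Thus $\|(v-v_D)_+\|_{L^2}\to 0$ exponentially, and together with the barrier also $\|(v-v_D)_+\|_{\mathbb X}\to 0$.

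The main obstacle is the final upgrade from \emph{vanishing of the excess} to the pointwise relation $v\le v_D$ at a finite time, and this is exactly where Case I must enter: because the orbit is not precompact and the stability is degenerate ($\mu_{(u_d,0)}=\nu_{(0,v_D)}=0$), I cannot pass to a limit to conclude $v\le v_D$. I would instead argue by contradiction: if $v\le v_D$ failed at every time, then, combining the propagation step with the vanishing excess, $(v-v_D)_+$ would persist while tending to $0$, forcing $cv$ to be asymptotically as large as $cv_D=u_d$ on the region carrying this excess; feeding this into the $u$-equation and exploiting $\int_\Omega\mathcal{K}[u]\,dx=0$ (again \textbf{(C3)}), I would show the averaged growth rate of $u$ becomes non-positive, driving $u(\cdot,t)\rightharpoonup 0$ in $L^2(\Omega)$ and contradicting Case I. Converting the \emph{weak, integral} non-vanishing of $u$ into \emph{effective suppression} of $v$ at the points where $v$ is nearly $v_D$, in the complete absence of compactness, is the delicate heart of the argument.
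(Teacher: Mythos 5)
The parts you actually carry out are sound but are the easy parts: the upper barrier $v\le v_D+\varepsilon$ for large $t$ by comparison with the single-species problem, and the propagation step (once $v\le v_D$ everywhere at one time $T_0$, the positive source $buv$ and the comparison principle keep $v<v_D$ afterwards) both match the paper. Your energy estimate for $g=(v-v_D)_+$ is also essentially correct (the ground-state transform argument for the symmetric operator does give the negative spectral bound, and the cross terms do have the right sign), but it is redundant: the barrier already gives $\|(v-v_D)_+\|_{\mathbb X}\to 0$. The genuine gap is exactly at the step you yourself flag as ``the delicate heart'': producing one finite time at which $v\le v_D$ on all of $\bar\Omega$. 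Your proposed contradiction --- a persistent excess of $v$ forces $cv$ close to $cv_D=u_d$ on the excess region, which then drives $u(\cdot,t)\rightharpoonup 0$ --- does not work. The excess region can have measure tending to zero and the excess amplitude tends to zero, so its contribution to $\frac{d}{dt}\int_\Omega u\,dx=\int_\Omega u(m-u-cv)\,dx$ is negligible; moreover $v\le v_D+\varepsilon$ is an \emph{upper} bound on $cv$, not the lower bound you would need to make that integral non-positive. Nothing in your scheme prevents $u$ from persisting while a tiny excess of $v$ lingers forever, so Case I is never genuinely brought to bear.

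What is missing is the paper's key device, Lemma \ref{lem-tricky}, whose whole purpose is the conversion you could not perform: from Case I one gets $\int_\Omega u(x,t_j)\,dx\ge a_0>0$ along a sequence $t_j\to\infty$; since $k(x,x)>0$ gives $k\ge\delta_1$ on a neighborhood of the diagonal (\textbf{(C2)}), $u$ satisfies $u_t\ge d\delta_1\int_{\Omega\cap B_{r_1}(x)}u(y,t)\,dy-A_1u$, and a finite chain of overlapping balls covering $\Omega$ lets the nonlocal term spread this integral mass into a \emph{uniform pointwise} lower bound $u(x,t)\ge A_3>0$ on $\bar\Omega$ for $t$ in short intervals following each $t_j$. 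It is this pointwise lower bound on $u$ --- not any property of the excess of $v$ --- that supplies the strict suppression: wherever $v\ge v_D$, combining it with $v\le(1+h_j)v_D$ yields $v_t\le O(h_j)-bA_3v_D\le -A_4<0$, which forces $v(\cdot,t_j+\tfrac12+\epsilon_1)<v_D$ everywhere in $\bar\Omega$ for $j$ large; your propagation step then finishes the proof. Note the direction of the mechanism is the reverse of yours: non-vanishing of $u$ suppresses $v$, rather than the excess of $v$ suppressing $u$. (A minor point: part (ii) follows by pure symmetry of the system under \textbf{(C1)}--\textbf{(C3)}; the identities $bc=1$, $bu_d=v_D$ from Proposition \ref{prop-localstability} are not needed for it.)
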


We prepare a lemma first, which is crucial in the proof of Proposition \ref{prop-weakL2-nonzero}.

\begin{lem}\label{lem-tricky}
Let $\Omega$ denote a bounded domain in $\mathbb R^n$. Assume that $u(\cdot,t)\in L^{\infty}(\Omega)$, $t \geq 0$ satisfies
$$
u_t(x,t) \geq \delta \int_{\Omega \cap B_r(x)} u(y,t) dy,\ 
\textrm{ and } u(x,t)\geq 0 \textrm{ for } t\geq 0,
$$
where $r>0, \delta>0$. Then for any $t_0\geq 0$, $0<t<1$, there exist $\alpha>0$ and $A_0=A_0(\Omega)$ such that
$$
u(x,t_0+t) \geq A_0 t^{\alpha} \int_{\Omega} u(x,t_0) dx\ \ \textrm{in}\ \Omega.
$$
\end{lem}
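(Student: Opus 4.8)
The plan is to exploit two features of the differential inequality at once: it forces $u$ to be nondecreasing in time, and it forces $u$ to spread its mass across all of $\Omega$. Since $u\ge 0$ and $\delta>0$, the hypothesis gives $u_t(x,t)\ge 0$, so $t\mapsto u(x,t)$ is nondecreasing; in particular $u(\cdot,s)\ge u(\cdot,t_0)$ for $s\ge t_0$. After translating time so that $t_0=0$ (the inequality is structurally autonomous, so $\tilde u(x,\tau):=u(x,t_0+\tau)$ obeys the same bound), I would integrate from $0$ to $t$ and discard the nonnegative term $u(x,0)$ to get
\[
u(x,t)\ \ge\ \delta\int_0^t (T u(\cdot,s))(x)\,ds,\qquad (T\phi)(x):=\int_{\Omega\cap B_r(x)}\phi(y)\,dy,
\]
where $T$ is a positive, order-preserving linear operator on nonnegative functions.

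The core of the argument is to iterate this inequality. Because $T$ is positive and commutes with the time integral (Fubini), substituting the bound for $u(\cdot,s_1)$ into itself $N$ times yields
\[
u(x,t)\ \ge\ \delta^N\int_0^t\!\!\int_0^{s_1}\!\!\cdots\int_0^{s_{N-1}}(T^N u(\cdot,s_N))(x)\,ds_N\cdots ds_1,
\]
where $T^N$ is the $N$-fold iterate, whose kernel $K_N(x,y)$ is the measure of the set of $N$-step $r$-chains in $\Omega$ joining $x$ to $y$. Using monotonicity in time, $u(\cdot,s_N)\ge u(\cdot,0)$, the factor $(T^N u(\cdot,s_N))(x)\ge (T^N u(\cdot,0))(x)$ is independent of the time variables, and the remaining nested integral evaluates to $t^N/N!$, so
\[
u(x,t)\ \ge\ \frac{\delta^N}{N!}\,t^N\,(T^N u(\cdot,0))(x).
\]

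The main obstacle is the geometric spreading estimate: I must produce $N=N(\Omega,r)$ and $c_0=c_0(\Omega,r)>0$ with $K_N(x,y)\ge c_0$ for all $x,y\in\bar\Omega$, equivalently $(T^N\phi)(x)\ge c_0\int_\Omega\phi\,dy$. Since $\Omega$ is a bounded connected domain of finite diameter, I would cover $\bar\Omega$ by finitely many balls of radius $r/4$ centred in $\bar\Omega$; connectedness makes their intersection graph connected, so any $x,y\in\bar\Omega$ are joined by a chain of at most $N$ such balls. Restricting each intermediate integration variable in $K_N$ to the corresponding ball, and using that $\Omega$ is open with smooth boundary (so $|\Omega\cap B_\rho(z)|$ is bounded below uniformly over $z\in\bar\Omega$), yields an explicit positive $c_0$ uniform in $x,y$; this is the delicate step, as it must hold up to the boundary and not merely in the interior.

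Combining the spreading estimate with the previous display and restoring $t_0$ gives
\[
u(x,t_0+t)\ \ge\ \frac{\delta^N c_0}{N!}\,t^N\int_\Omega u(y,t_0)\,dy,
\]
which is exactly the claim with $\alpha=N$ and $A_0=\delta^N c_0/N!$ (depending on $\Omega$, and on the fixed data $r,\delta$). The restriction $0<t<1$ plays no essential role in the estimate and is retained only because that is the regime in which the lemma is subsequently applied.
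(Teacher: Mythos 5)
Your proposal is correct and is essentially the paper's own argument: both integrate the differential inequality repeatedly in time and propagate positivity along a chain of radius-$r/4$ balls covering $\Omega$ whose consecutive members intersect, arriving at a bound of the form $\delta^{N}\sigma^{N-1}t^{N}/N!\cdot\int_{\Omega}u(y,t_0)\,dy$. The differences are purely organizational: you package the iteration as a uniform lower bound on the kernel of $T^{N}$, while the paper runs a ball-by-ball induction started from a ball carrying mass at least $\frac{1}{J}\int_{\Omega}u_0$ (pigeonhole); your version incidentally gives the same exponent $N$ at every $x$, so the restriction $0<t<1$ (which the paper uses to replace $t^{j-1}$ by the worst power $t^{J-1}$) is indeed not needed.
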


\begin{proof}
W.l.o.g., assume that $t_0=0$. Note that it is obvious if $u(x,0)\equiv 0$. Now suppose that $u(x,0)\not\equiv 0$ and let $a= \int_{\Omega} u_0(x)dx >0$. Since $\Omega$ is bounded, there exist $x_j \in \mathbb R^n$, $1\leq j\leq J$ such that
$$
\Omega \subset\subset \bigcup_{1\leq j\leq J} B_j, \textrm{ with } B_j \triangleq B_{r/4} (x_j)= \{ x\in \mathbb R^n, \ |x-x_j| < r/4  \}.
$$

W.l.o.g, assume $\sigma = \min \{ |B_j\bigcap \Omega|,\ 1\leq j\leq J \}>0$, $\int_{\Omega  \bigcap B_1} u(y,t) dy \geq a/J$.
and $B_{j+1} \bigcap B_j \neq \varnothing$, $1\leq j\leq J-1$.

Now first  for any $x\in B_1$,
$$
u_t(x,t) \geq \delta \int_{\Omega \bigcap B_r(x)} u(y,t) dy \geq \delta \int_{\Omega \bigcap B_1} u(y,t) dy \geq \delta {a\over J}.
$$
Thus for $x\in B_1$, $t>0$,
\begin{equation}\label{pf-lem-tricky-1}
u(x,t)  \geq \delta {a\over J} t.
\end{equation}

Secondly,  for any $x\in B_2$, it follows that
$$
u_t(x,t) \geq \delta \int_{\Omega \bigcap B_r(x)} u(y,t) dy \geq \delta \int_{\Omega \bigcap B_1} u(y,t) dy \geq \delta {a\over J}.
$$
Thus for $x\in B_2$, $t>0$,
\begin{equation}\label{pf-lem-tricky-2}
u(x,t)  \geq \delta {a\over J} t.
\end{equation}

Next,   for any $x\in B_3$, by (\ref{pf-lem-tricky-2}), one sees that
\begin{eqnarray*}
u_t(x,t) &\geq& \delta \int_{\Omega \bigcap B_r(x)} u(y,t) dy \geq \delta \int_{\Omega \bigcap B_2} u(y,t) dy \\
&\geq& \delta \sigma \delta {a\over J} t = \sigma \delta^2 {a\over J} t.
\end{eqnarray*}
Hence for $x\in B_3$, $t>0$,
$$
u(x,t) \geq \sigma \delta^2 {a\over J} {t^2\over 2}.
$$
This step can be repeated and we have, for $x\in B_j$, $3\leq j\leq J$, $t>0$,
$$
u(x,t) \geq \sigma^{j-2} \delta^{j-1} {a\over J} {t^{j-1}\over (j-1)!}.
$$
Therefore, together with (\ref{pf-lem-tricky-1}) and (\ref{pf-lem-tricky-2}), one sees that, for $x\in\bar\Omega$, $0<t<1$
$$
u(x,t) \geq c_0 t^{J-1} \int_{\Omega} u(x,0) dx,
$$
where
$$
A_0= \min \{ 1,\sigma, \sigma^{J-2} \}  \min\{ \delta, \delta^{J-1} \} {1\over J!}.
$$
The lemma is proved by choosing $\alpha =J-1.$
\end{proof}

\begin{proof}[Proof of Proposition \ref{prop-weakL2-nonzero}]
Assume that {\it Case I}  happens, i.e.,  $u(\cdot, t) \not \rightharpoonup 0$ in $L^2(\Omega)$ as $t  \rightarrow \infty$, then there exist a constant $a_0>0$ and
a sequence $\{ t_j  \}_{j\geq 1}$ with $t_j \rightarrow \infty$ as $j\rightarrow \infty$ such that
\begin{equation}\label{int-lowerbd}
\int_{\Omega} u(x,t_j)dx > a_0\ \ \textrm{for all } j\geq 1.
\end{equation}

First of all, we will derive an uniform lower bound for $u$ in certain time intervals.
According to assumption \textbf{(C2)}, there exist $r_1>0$, $\delta_1>0$ such that $k(x,y)\geq \delta_1$ if $|x-y|\leq r_1$.
Then one sees that
\begin{eqnarray}\label{rough-inq-eqn-u}
u_t &=& d\int_{\Omega}  k(x,y) u(y,t) dy +u(m(x)- d a_d(x) -u-cv)\cr
    &\geq & d \delta_1 \int_{\Omega \bigcap B_{r_1}(x)} u(y,t) dy -A_1 u,
\end{eqnarray}
where
\begin{equation}\label{c1}
A_1 = \sup_{x\in\Omega,\ t>0}\left | m(x)-d a_d(x) -u(x,t)-cv(x,t)\right |.
\end{equation}
Let $U=e^{A_1 t} u$ and it follows
that
$$
U_t \geq d \delta_1 \int_{\Omega \bigcap B_{r_1}(x)} U(y,t) dy.
$$
Thus Lemma \ref{lem-tricky} can be applied to induce that there exist $\alpha>0$, $A_0=A_0(\Omega)$ such that
$$
U (x,t_j+{1\over 2}) \geq A_0 \left({1\over 2}\right)^{\alpha} \int_{\Omega} U(x,t_j) dx\ \ \textrm{in}\ \bar\Omega, \ j\geq 1,
$$
which, by (\ref{int-lowerbd}), implies a crucial estimate:
\begin{eqnarray}\label{app-trickylm}
u (x,t_j+{1\over 2}) &\geq & A_0 e^{-{1\over 2}A_1} \left({1\over 2}\right)^{\alpha} \int_{\Omega} u(x,t_j) dx \cr
&\geq & A_0 e^{-{1\over 2}A_1} \left({1\over 2}\right)^{\alpha} a_0 \doteq A_2\ \ \textrm{in}\ \bar\Omega, \ j\geq 1.
\end{eqnarray}

Thus, thanks to (\ref{app-trickylm}), we have the following estimate for $t> t_j+{1\over 2}$
\begin{eqnarray*}\label{dispersal-lowerbd}
&& d\int_{\Omega}  k(x,y) u(y,t) dy\cr
&=& d\int_{\Omega}  k(x,y) \int_{t_j+{1\over 2}}^t u_{\tau}(y,\tau) d\tau dy +d\int_{\Omega}  k(x,y) u(y,t_j+{1\over 2}) dy\cr
&\geq &  -d \| k(\cdot,\cdot)\|_{L^{\infty}(\Omega\times \Omega)}\| u_t(\cdot, t) \|_{L^{\infty}(\Omega)} (t-t_j- {1\over 2} ) + d A_2\int_{\Omega}  k(x,y) dy.
\end{eqnarray*}
It is easy to see that $\min_{x\in\bar\Omega} \int_{\Omega}  k(x,y) dy >0$ since $\int_{\Omega}  k(x,y) dy \in \mathbb X$.
Denote
$$
\delta_2 =d  A_2\min_{x\in\bar\Omega} \int_{\Omega}  k(x,y) dy .
$$
Also, it is easy to verify that $\| u_t(\cdot, t) \|_{L^{\infty}(\Omega)}$ has an upper bound independent of $t\geq 0$. Hence, there exists $\epsilon_1>0$ such that for any $j\geq 1$, such that
$$
d\int_{\Omega}  k(x,y) u(y,t) dy \geq \delta_2/ 2\ \ \textrm{in}\ \bar\Omega,
$$
which yields that for $t\in [t_j+{1\over 2}, t_j+{1\over 2} +\epsilon_1]$, $x\in \bar\Omega$, $j\geq 1$,
\begin{eqnarray*}
u_t(x,t) &=& d\int_{\Omega}  k(x,y) u(y,t) dy +u(m(x)- a_d(x) -u-cv) \geq  \delta_2/ 2 -A_1 u(x,t),
\end{eqnarray*}
where $A_1$ is determined in (\ref{c1}). Direct computation gives that
$$
u(x,t) \geq {\delta_2 \over 2A_1} \left(  1- e^{-A_1 (t-t_j - {1\over 2} )}         \right)\ \ \textrm{for}\  x\in \bar\Omega,\  t\in [t_j+{1\over 2}, t_j+{1\over 2} +\epsilon_1],
\ j\geq 1.
$$
Therefore, we reach the conclusion that
\begin{equation}\label{pf-lowerbd-u}
u(x,t) \geq A_3\ \ \textrm{for}\  x\in \bar\Omega,\  t\in [t_j+{1\over 2}+ {\epsilon_1\over 2}, t_j+{1\over 2} +\epsilon_1],
\ j\geq 1,
\end{equation}
where
$$
A_3 = {\delta_2 \over 2A_1} \left(  1- e^{-A_1 \epsilon_1/2}\right) >0.
$$

Now we are ready to derive the desired estimates for $v(x,t)$. Note that for single-species model (\ref{singleD}), for any given   initial data in $\mathbb X_{++}$, the corresponding solution $V(\cdot, t) \rightarrow v_D$ in  $\mathbb X$ as
$t\rightarrow \infty.$
Thus, thanks to comparison principle, it routine to verify that there exist  sequences  $\{ h_j\}$ with $h_j>0$, and $\lim_{j\rightarrow \infty}h_j =0$   such that
\begin{equation}\label{pf-upperbd-v}
v(x,t) \leq (1+h_j) v_D(x) \ \ \textrm{in} \  \bar\Omega\ \   \textrm{for}  \ t\geq t_j.
\end{equation}

Notice that to complete the proof, by comparison principle, it suffices to show the existence of $T_1$ such that $v(x,t)<v_D(x)$ in $\bar\Omega$ at $t = T_1$.
Indeed we will prove   that {\it $v(x,t_j+{1\over 2}+\epsilon_1)<v_D(x)$ in $\bar\Omega$ for $j$ large.}

Fix $x\in \bar\Omega$. Suppose that
\begin{equation}\label{pf-prop-v}
  v(x,t) \geq v_D(x)\ \ \ \textrm{for}\ t\in [t_j+{1\over 2}+ {\epsilon_1\over 2}, t_j+{1\over 2}+\epsilon_1],
\end{equation}
which,  by (\ref{pf-lowerbd-u}) and (\ref{pf-upperbd-v}), yields that for $t\in [t_j+{1\over 2}+ {\epsilon_1\over 2}, t_j+{1\over 2}+\epsilon_1]$
\begin{eqnarray}\label{pf-prop-nonloca1}
  v_t(x,  t) &=& D\int_{\Omega}  p(x,y) v(y,  t) dy +v(x,  t)(M(x)- D a_D(x) -b u(x,  t)-v(x,  t)) \cr
    &\leq &  D\int_{\Omega}  p(x,y) v(y,  t) dy +v(x,  t)(M(x)- D a_D(x) -b u(x,  t)-v_D(x))\cr
    &= & D\int_{\Omega}  p(x,y) (1+h_j) v_D(y) dy +v_D(x)(M(x)- D a_D(x) -v_D(x))\cr
    && + (v(x,t)-v_D(x)) (M(x)- D a_D(x)  -b u(x,  t)  -v_D(x))  - b u(x,t)v_D(x)\cr
    &=& O(h_j)  - b A_3  v_D(x) \leq  - A_4,
\end{eqnarray}
for $j $ sufficiently large, where
$$
A_4 = {1\over 2} b A_3 \min_{ \bar\Omega} v_D >0.
$$
Hence
$$
v(x,t_j+{1\over 2}+\epsilon_1) \leq v(x, t_j+{1\over 2}+ {\epsilon_1\over 2}) -{1\over 2} \epsilon_1 A_4\leq (1+h_j) v_D(x)-{1\over 2} \epsilon_1 A_4 <v_D(x)
$$
provided that $j$ is large enough, which contradicts to (\ref{pf-prop-v}).


Therefore, if $j$ is sufficiently large, there exists $s= s(x)\in [t_j+{1\over 2}+ {\epsilon_1\over 2}, t_j+{1\over 2}+\epsilon_1]$ such that $v(x,s)<v_D(x)$. Note that $s$ depends on the choice of $x$ and in fact we need find a moment which is independent of $x\in \bar\Omega$.

We will show that {\it if $j$ is large enough, $v(x,t)<v_D(x)$ for $t\in  [s(x), t_j+{1\over 2}+\epsilon_1].$}
Otherwise, there exists $\tilde t = \tilde t (x)\in (s(x), t_j+{1\over 2}+\epsilon_1]$ such that $v(x,\tilde t)=v_D(x)$ and $v(x,  t)<v_D(x)$ for $t\in (s(x), \tilde t)$. Then, by (\ref{pf-lowerbd-u}) and (\ref{pf-upperbd-v}), it follows that
\begin{eqnarray}\label{pf-prop-nonlocal2}
0\leq v_t(x,\tilde t) &=& D\int_{\Omega}  p(x,y) v(y,\tilde t) dy +v(x,\tilde t)(M(x)- D a_D(x) -b u(x,\tilde t)-v(x,\tilde t)) \cr
    &\leq & D\int_{\Omega}  p(x,y) (1+h_j) v_D(y) dy +v_D(x)(M(x)- D a_D(x) -b A_3-v_D(x))\cr
    &=& h_j  D\int_{\Omega}  p(x,y) v_D(y) dy - b A_3  v_D(x)<0
\end{eqnarray}
for $j$ large. This is a contradiction. Hence, in particular,   $v(x,t_j+{1\over 2}+\epsilon_1)<v_D(x)$ for $j$ sufficiently large.

The proof of   (i) is complete and (ii) can be proved in the same way.
\end{proof}

Now, we continue the proof for {\it Case I}.  With the help of Proposition \ref{prop-weakL2-nonzero}(i), w.l.o.g., we could assume that
$u_0>0$, $0<v_0< v_D$ in $\bar\Omega$. Define
$$
\theta (t) = \sup \{ \theta\ | \ u(x,t)>\theta u_d(x), v(x,t)<(1-\theta ) v_D(x) \ \textrm{in}\ \bar\Omega  \}.
$$
It is obvious that $0< \theta(0) <1$, $\theta(t)$ is increasing in $t$ due to comparison principle. Denote
$$
\theta_* = \lim_{t\rightarrow \infty} \theta (t) \leq 1.
$$

Assume that $\theta_* = 1$.     For $v(x,t) $, since $v(x,t) \leq (1-\theta(t) ) v_D(x)$ in $\bar\Omega$, it is obvious that
$$
v(\cdot, t) \rightarrow 0\  \textrm{in}\  \mathbb X \ \ \textrm{as}\   t \rightarrow \infty.
$$
For $u(x,t) $, compared with the solution $U(x,t)$ of single-species model (\ref{singled}) with initial data $u_0\in \mathbb X_{++}$, one sees that
$u(x,t) \leq   U(x,t).$ Thus it follows from Theorem \ref{thm-single} and the definition of $\theta(t)$ that
$$
u(\cdot, t) \rightarrow u_d(\cdot)\  \textrm{in}\  \mathbb X \ \ \textrm{as}\   t \rightarrow \infty,
$$

It remains to consider  $\theta_* < 1$. For clarity,  the proof of this situation will be divided into three steps.

\noindent{\it Step 1.}  We claim that {\it there exists a subsequence of $(u(\cdot, t), v(\cdot, t))$, which converges to $(\alpha_1 u_d, (1-\alpha_1 ) v_D)$ in $L^2(\Omega) \times L^2(\Omega)$, where $\alpha_1\in [0,1]$ .}

Fix $0<s_1<\theta (0)$,  let $(u^*, v^*) = (s_1 u_d, (1-s_1) v_D )$ and set
$$
w(x,t) = u(x,t)- u^*(x),\ z(x,t) =v(x,t)- v^*(x).
$$
Recall that $(u,v)$ satisfies
\begin{equation*}
\begin{cases}
u_t= d \mathcal{K}[u]  +u(m(x)- u- c v) &\textrm{in } \Omega\times[0,\infty),\\
v_t= D \mathcal{P}[v]  +v(M(x)-b u-  v) &\textrm{in } \Omega\times[0,\infty),
\end{cases}
\end{equation*}
and $(u^*, v^*)$ satisfies
\begin{equation*}
\begin{cases}
    d \mathcal{K}[u^*]  +u^*(m(x)-u^*- cv^*)=0, \\
    D \mathcal{P}[v^*]  +v^*(M(x)-bu^*- v^*)=0.
\end{cases}
\end{equation*}
Thus using the equations satisfied by $u$ and $u^*$, one has
$$
d\left( u^* \mathcal{K}[u] - u \mathcal{K}[u^*] \right)  = u^* u_t + uu^* (w+cz)
$$
This yields that
\begin{equation*}
d\int_{\Omega}\left(-u \mathcal{K}[u^*] + u^* \mathcal{K}[u]  \right) \frac{w^2}{u u^* } dx =  \int_{\Omega} \left( {u_t\over u}w^2 + (w+cz)w^2 \right )dx.
\end{equation*}
Same as the estimates of the left hand side of (\ref{pf-nonpositive})  in the proof of Proposition \ref{prop-steadystates}, we have
\begin{equation}\label{pf-int-w}
\int_{\Omega} \left( {u_t\over u}w^2 + (w+cz)w^2 \right )dx \leq 0.
\end{equation}
Similarly, using the equations satisfied by $v$ and $v^*$, we obtain
\begin{equation}\label{pf-int-z}
\int_{\Omega} \left( {v_t\over v}z^2 + (bw+ z)z^2 \right )dx \geq 0.
\end{equation}

Then (\ref{pf-int-w}), (\ref{pf-int-z}) and $bc=1$ imply that
\begin{eqnarray}\label{pf-int-wz}
&& c^3 \int_{\Omega} {v_t\over v} z^2dx - \int_{\Omega} {u_t\over u} w^2 dx \geq   \int_{\Omega} \left[-c^3(bw+ z)z^2 +  (w+cz)w^2 \right]dx\cr
&=&  \int_{\Omega}   (w+cz)^2 (w - cz)  dx.
\end{eqnarray}
Note that
$$
w- cz = u- u^* -c (v-v^*) \geq (\theta(t) -s_1) u_d + c (\theta(t) -s_1)v_D =2(\theta(t) -s_1) u_d,
$$
since $bc =1$ and $bu_d = v_D$. Denote $C_0 = 2(\theta(0) -s_1) \min_{  \bar\Omega} u_d $. Hence (\ref{pf-int-wz}) becomes
\begin{eqnarray*}
&& \int_{\Omega}   (w+cz)^2 dx\\
&\leq & {1\over C_0} \left( c^3 \int_{\Omega} {v_t\over v} z^2dx - \int_{\Omega} {u_t\over u} w^2 dx  \right)\\
& = & {1\over C_0} \left( c^3 \int_{\Omega} \left(vv_t -2v^* v_t +  (v^*)^2{v_t\over v}  \right)dx - \int_{\Omega}  \left(uu_t -2u^* u_t +  (u^*)^2{u_t\over u}  \right)dx  \right).
\end{eqnarray*}
This implies that
\begin{equation}\label{pf-int-bd}
\int_0^{\infty } \int_{\Omega}   (w+cz)^2 dx dt <\infty.
\end{equation}
Moreover, it is routine to verify that $\int_{\Omega}   (w+cz)^2 dx$ is uniformly continuous in $t$. This, together with (\ref{pf-int-bd}), yields that
\begin{equation}\label{pf-int-0limit}
\lim_{t\rightarrow \infty}\int_{\Omega}   (w+cz)^2 dx =0.
\end{equation}
Again since $bc =1$ and $bu_d = v_D$, $w+cz = u+cv - s_1u_d- c(1-s_1) v_D = u+cv -u_d$.  Hence (\ref{pf-int-0limit}) tells us that
\begin{equation}\label{pf-int-L2}
u(\cdot, t)+cv(\cdot, t) \rightarrow u_d(\cdot) \ \textrm{in}\ L^2(\Omega)\ \textrm{as} \ t\rightarrow \infty.
\end{equation}

Next estimate $\int_{\Omega} u_t^2 dx$ as follows.
\begin{eqnarray*}
\int_{\Omega} u_t^2 dx  &=& \int_{\Omega}  \left( d \mathcal{K}[u] u_t  +u(m(x)- u- c v)u_t \right) dx\\
&=&  {d\over dt} \int_{\Omega} \left(  {1\over 2} d \mathcal{K}[u] u  +{1\over 2} (m-u_d) u^2   \right) dx  +  \int_{\Omega} (u_d - u- cv) uu_t dx\\
&\leq &{d\over dt} \int_{\Omega} \left(  {1\over 2} d \mathcal{K}[u] u  +{1\over 2} (m-u_d) u^2   \right) dx  +  {1\over 2}\int_{\Omega} (u_d - u- cv)^2 u^2  dx  +{1\over 2}\int_{\Omega}  u_t^2 dx,
\end{eqnarray*}
which gives that
\begin{eqnarray*}
&& \int_0^{\infty }\int_{\Omega} u_t^2 dx dt \\
 &\leq & \int_0^{\infty } {d\over dt} \int_{\Omega} \left(    d \mathcal{K}[u] u  +  (m-u_d) u^2   \right) dx dt +   \int_0^{\infty }   \int_{\Omega} (u_d - u- cv)^2 u^2  dx dt  \\
 &<&  \infty
\end{eqnarray*}
thanks to (\ref{pf-int-bd}). Moreover, $\int_{\Omega} u_t^2 dx$ is uniformly continuous in $t$. Thus we obtain that
\begin{equation}\label{pf-int-ut-limit}
\lim_{t\rightarrow \infty}\int_{\Omega}  u_t^2 dx =0.
\end{equation}

Furthermore, by the equation satisfied by $u$:
$$
u_t =d\int_{\Omega}  k(x,y) u(y,t) dy +u(m(x)- d a_d(x) -u-cv),
$$
one has
$$
u(x,t) = \frac{u_t  - d\int_{\Omega}  k(x,y) u(y,t) dy}{m(x)- d a_d(x) -u_d} - \frac{u(u_d-u-cv)}{m(x)- d a_d(x) -u_d},
$$
where, by the equation satisfied by $u_d$,
\begin{equation}\label{pf-int-notzero}
m(x)- d a_d(x) -u_d= - \frac{d\int_{\Omega} k(x,y) u_d(y)dy}{u_d(x)}<0\ \ \textrm{in}\ \bar\Omega.
\end{equation}
Also, notice that for $\phi\in \mathbb X$, the   mapping
$
\phi \rightarrow \int_{\Omega}  k(x,y) \phi (y) dy
$
is compact from $\mathbb X$ to $\mathbb X$.
Thus, there exist  a subsequence $\{u(\cdot,t_j)  \}$, $j\geq 1$, and $\Phi\in \mathbb X$ such that, as $j\rightarrow \infty$,
\begin{equation}\label{pf-int-subseq-Linfty}
\int_{\Omega}  k(x,y) u(y,t_j) dy  \rightarrow \Phi \ \ \textrm{in}\ \  \mathbb X.
\end{equation}
This, together with (\ref{pf-int-L2}) and (\ref{pf-int-ut-limit}), implies that
\begin{equation}\label{pf-int-subseq-L2}
u(\cdot, t_j) \rightarrow \frac{  - d  \Phi (\cdot) }{m(\cdot)- d a_d(\cdot) -u_d(\cdot)}\ \  \textrm{in}\ L^2(\Omega)\ \ \textrm{as} \ j\rightarrow \infty.
\end{equation}
Denote
$$
\tilde{u}= \frac{  - d  \Phi   }{m - d a_d  -u_d } \in \mathbb X.
$$
By (\ref{pf-int-subseq-Linfty}) and (\ref{pf-int-subseq-L2}), we have
$$
d\mathcal{K}[\tilde{u} ] +\tilde{u}(m-u_d)=0,
$$
which implies that there exists $ \alpha_1 \geq 0$ such that $\tilde{u} =  \alpha_1 u_d$ since both $\tilde{u}$ and $u_d$ can be regarded as the  eigenfunctions to the principal eigenvalue zero of the   eigenvalue problem
$d\mathcal{K}[\phi] + (m-u_d) \phi =\mu \phi.$  Thus  (\ref{pf-int-subseq-L2}) becomes
$$
u(\cdot, t_j) \rightarrow  \alpha_1 u_d \ \  \textrm{in}\ L^2(\Omega)\ \ \textrm{as} \ j\rightarrow \infty.
$$
At the end, according to $bc =1$, $bu_d = v_D$ and (\ref{pf-int-L2}),  it is routine to check that $ \alpha_1 \in  [0,1]$ and
$v(\cdot, t_j) \rightarrow (1- \alpha_1) v_D$ in $L^2(\Omega)$ as $j\rightarrow \infty$.

The claim is proved.

\noindent{\it Step 2.}  In this step, we will prove that {\it   $(u(\cdot, t), v(\cdot, t))$ converges  in $L^2(\Omega) \times L^2(\Omega)$.}
Based on the proof in {\it Step 1}, it suffices to show $\theta_* = \alpha_1$. Obviously, $\theta_*  \leq  \alpha_1$. Now suppose that $\theta_* <  \alpha_1$ and  a contradiction will be derived.

According to the definition of $\theta_*$, for any $\delta>0$, there exists $t_{\delta}>0$ such that for $t \geq  t_{\delta}$,
\begin{equation}\label{pf-L2-begin}
u(x,t) > (\theta_* -\delta) u_d(x),\ v(x,t) < (1-\theta_* +\delta ) v_D(x)\ \ \textrm{in} \ \bar\Omega.
\end{equation}
We   claim that {\it there exist $ \epsilon_0>0$, $  \delta_0>0$ and $  j_0\geq 1$ such that for $j\geq   j_0$},
\begin{equation}\label{pf-nonlocal-L2+}
{\it u(x,t_j+  \epsilon_0 ) > (\theta_* + \delta_0) u_d(x),\ v(x,t_j+  \epsilon_0) < (1-\theta_* -  \delta_0 ) v_D(x)\ \ \textrm{in} \ \bar\Omega.}
\end{equation}

Since $u(\cdot, t_j) \rightarrow   \alpha_1 u_d(\cdot)$  in $L^2(\Omega)$ as $j\rightarrow\infty$, it is standard to check that
$$
d  \int_{\Omega}  k(x,y) u(y,t_j) dy  \rightarrow d  \int_{\Omega}  k(x,y)  \alpha_1 u_d(y) dy \ \ \textrm{in}\ \  \mathbb X\ \textrm{as}\ j\rightarrow\infty.
$$
Thus $\theta_* <  \alpha_1$ implies that there exist $\ell_1>0$ and $j_1\geq 1$ such that
for $j\geq j_1$,
\begin{equation}\label{pf-L2-3ell}
d  \int_{\Omega}  k(x,y) u(y,t_j) dy  > d  \int_{\Omega}  k(x,y) \theta_* u_d(y) dy+3\ell_1 \ \ \textrm{in}\ \  \bar\Omega.
\end{equation}
Also, note that $\|u_t(\cdot, t)\|_{L^{\infty}(\Omega)}$ is uniformly bounded in $t$ due to the boundedness of solutions.  It follows from (\ref{pf-L2-3ell}) that there exists $\epsilon_1>0$, independent of $j\geq j_1$, such that for $t\in [t_j,t_j+\epsilon_1]$, $j\geq j_1$,
\begin{equation}\label{pf-L2-2ell}
d  \int_{\Omega}  k(x,y) u(y,t) dy  > d  \int_{\Omega}  k(x,y) \theta_* u_d(y) dy+ 2 \ell_1 \ \ \textrm{in}\ \  \bar\Omega.
\end{equation}
Note that $\epsilon_1$ could be smaller if necessary.

Moreover, there exists $\delta_1>0$ such that for any $0<\delta<\delta_1$, $x\in\bar\Omega$
\begin{equation}\label{pf-L2-reactionterm}
\ell_1 + u(m-a_d -u -cv) > \theta_* u_d \left( m-a_d - \theta_* u_d -c(1-\theta_* )v_D \right),
\end{equation}
as long as $u\in [(\theta_* -\delta) u_d(x), (\theta_* +\delta) u_d(x)]$, $v\in [(1-\theta_* -\delta ) v_D(x), (1-\theta_* +\delta ) v_D(x)]$.

Fix $x\in\bar\Omega$ and $0<\delta<\delta_1$.  Suppose that  if $j\geq j_1$, $t_j\geq t_{\delta}$,  for any $t\in [t_j,t_j+\epsilon_1]$, $u(x,t) \leq (\theta_* +\delta) u_d(x)$.   Then by (\ref{pf-L2-begin}), (\ref{pf-L2-2ell}) and (\ref{pf-L2-reactionterm}),
\begin{eqnarray}\label{pf-thm12-step2-u1}
u_t(x,t) &= & d\int_{\Omega}  k(x,y) u(y,t) dy +u(m(x)- d a_d(x) -u-cv)\cr
   & > & d\int_{\Omega}  k(x,y) u(y,t) dy +u(m(x)- d a_d(x) -u-c(1-\theta_* +\delta ) v_D(x))\cr
   & > &d  \int_{\Omega}  k(x,y) \theta_* u_d(y) dy+ 2 \ell_1 +\theta_* u_d \left( m-a_d - \theta_* u_d -c(1-\theta_* )v_D \right) -\ell_1  \cr
   &=& \ell_1>0,
\end{eqnarray}
which yields that
\begin{eqnarray*}
u(x, t_j+\epsilon_1)  > u(x,t_j) +\ell_1 \epsilon_1 >  (\theta_* -\delta) u_d(x) +\ell_1 \epsilon_1 \geq (\theta_* +\delta) u_d(x)
\end{eqnarray*}
provided that
$$
\delta \leq  {\ell_1\epsilon_1\over 2 \max_{\bar\Omega} u_d}.
$$
This is impossible.
Therefore, given
$$
x\in\bar\Omega,\ \  0 <\delta < \min \left\{ \delta_1,   {\ell_1\epsilon_1\over 2 \max_{\bar\Omega} u_d} \right\},
$$
if $j\geq j_1$, $t_j\geq t_{\delta}$, there exists $\hat{t}_j =\hat{t}_j(x) \in [t_j, t_j+\epsilon_1]$ such that $u(x,\hat{t}_j) > (\theta_* +\delta) u_d(x)$. Note that indeed $\hat{t}_j $ depends on $x$.

Then for all $t\in [\hat{t}_j(x), t_j+\epsilon_1]$, $u(x, t) > (\theta_* +\delta) u_d(x)$ for any $x$ in $\bar\Omega$. Otherwise, if there exist $x^*\in\bar\Omega$ and $t^*_j\in (\hat{t}_j(x^*), t_j+\epsilon_1]$ such that $u(x^*, t^*_j) = (\theta_* +\delta) u_d(x^*)$ and $u(x^*, t ) > (\theta_* +\delta) u_d(x^*)$ for $t\in (\hat{t}_j(x^*), t^*_j) $, then due to (\ref{pf-L2-begin}),  (\ref{pf-L2-2ell}) and (\ref{pf-L2-reactionterm}), we derive that
\begin{eqnarray}\label{pf-thm12-step2-u2}
0 &\geq & u_t(x^*,t^*_j )\cr
 &=& d\int_{\Omega}  k(x,y) u(y,t) dy +u(m(x)- d a_d(x) -u-cv)\Big |_{(x,t)=(x^*, t^*_j)}\cr
&>&   d  \int_{\Omega}  k(x,y) \theta_* u_d(y) dy+ 2 \ell_1 \cr
&& + u(m(x)- d a_d(x) -u-c(1-\theta_* +\delta ) v_D)  \Big |_{(x,t)=(x^*, t^*_j)}\cr
&>&  d  \int_{\Omega}  k(x,y) \theta_* u_d(y) dy+ 2 \ell_1 +\theta_* u_d \left( m-a_d - \theta_* u_d -c(1-\theta_* )v_D \right)-\ell_1\cr
&=& \ell_1>0,
\end{eqnarray}
which is a contradiction.

Thus we have proved that there exist $\ell_1>0$, $j_1\geq 1$, $\epsilon_1>0$ and $\delta_1>0$ such that
$$
u(x, t_j+\epsilon_1) > (\theta_* +\delta) u_d(x)\ \ \textrm{in} \ \bar\Omega,
$$
provided that
$$
0 <\delta < \min \left\{ \delta_1,   {\ell_1\epsilon_1\over 2 \max_{\bar\Omega} u_d} \right\},\  j\geq j_1,\ t_j\geq t_{\delta}.
$$

Similarly, there exist $\ell_2>0$, $j_2\geq 1$, $\epsilon_2>0$ and $\delta_2>0$ such that
$$
v(x,t_j+\epsilon_2) < (1-\theta_* -  \delta ) v_D(x)\ \ \textrm{in} \ \bar\Omega,
$$
provided that
$$
0 <\delta < \min \left\{ \delta_2,   {\ell_2\epsilon_2\over 2 \max_{\bar\Omega} v_D} \right\},\  j\geq j_2,\ t_j\geq t_{\delta}.
$$
Also $\epsilon_2$ could be smaller if necessary.

In summary, choose $ \epsilon_0 = \min \{\epsilon_1, \epsilon_2 \}$ and fix
$$
0< \delta_0 <\min \left\{\delta_1,   {\ell_1  \epsilon_0 \over 2 \max_{\bar\Omega} u_d}, \delta_2,   {\ell_2  \epsilon_0\over 2 \max_{ \bar\Omega} v_D} \right\}
$$
and  $ j_0$ large enough such that   for $j\geq   j_0$, $t_j\geq t_{ \delta_0}$, then for $j\geq   j_0$,
\begin{equation*}
{\it u(x,t_j+  \epsilon_0 ) > (\theta_* + \delta_0) u_d(x),\ v(x,t_j+  \epsilon_0 ) < (1-\theta_* -  \delta_0 ) v_D(x)\ \ \textrm{in} \ \bar\Omega},
\end{equation*}
i.e., (\ref{pf-nonlocal-L2+}) is proved.
This is a contradiction to the definition of $\theta_*$.

Therefore, $\theta_* =  \alpha_1$ and  it follows that $(u(\cdot, t), v(\cdot, t))$ converges to $( \alpha_1 u_d, (1- \alpha_1  ) v_D)$  in $L^2(\Omega) \times L^2(\Omega)$ as $t\rightarrow \infty$.

\noindent{\it Step 3.} We will improve the $L^2(\Omega)\times L^2(\Omega)-$convergence to $\mathbb X\times \mathbb X-$convergence in this step. Define
$$
\eta (t) = \inf \{ \eta\ | \ u(x,t)<\eta u_d(x), v(x,t) > (1-\eta ) v_D(x) \ \textrm{in}\ \bar\Omega  \}.
$$
Obviously, $\eta(t)$ is decreasing in $t$ due to comparison principle. Denote
$
\eta ^* = \lim_{t\rightarrow \infty} \eta (t).
$

Notice that $\theta_* =\alpha_1<1$ immediately yields that $v(x,t)$ does not weakly converge to zero  in $L^2(\Omega)$. Due to Proposition \ref{prop-weakL2-nonzero}(ii), there exists $T_2>0$ such that $u(x,t)<u_d(x)$ in $\bar\Omega$ for $t\geq T_2$.
Hence, for  {\it Case I}, w.l.o.g., assume that
$$
0<u_0 <u_d,\ 0<v_0<v_D\ \ \textrm{in}\ \bar\Omega.
$$
This indicates that  $0 <\theta(0), \eta(0)<1$.

According the definitions of $\theta_*$,  $\eta ^*$ and $\alpha_1$, it is obvious that $\theta_* \leq  \alpha_1 \leq \eta ^*$,  $\theta_*\leq 1$ and $\eta^*\geq 0$.
There are three situations to discuss.
\begin{itemize}
\item {\it $\theta_* =1$.} It has been proved  before {\it Step 1} that
$$
\lim_{t\rightarrow \infty }(u(\cdot, t), v(\cdot, t)) =(   u_d, 0)\ \ \textrm{in}\ \mathbb X \times \mathbb X.
$$
\item {\it $\eta^* =0$.} Similar to the proof when $\theta_* =1$, it follows that
$$
\lim_{t\rightarrow \infty }(u(\cdot, t), v(\cdot, t)) =(  0, v_D)\ \ \textrm{in}\ \mathbb X \times \mathbb X.
$$
\item {\it $\theta_* <1$ and $\eta^* >0$.} According to {\it Steps 1 and 2}, $\theta(0)>0$ and $\theta_* <1$ yield  that $\theta_* =  \alpha_1$. Similarly, it can be proved that $\eta(0)<1$ and $\eta^* >0$ imply that $\eta ^* = \alpha_1$. Hence $\theta_* = \eta ^* = \alpha_1\in (0,1)$ and thus $(u(\cdot, t), v(\cdot, t))$ converges to $( \alpha_1 u_d, (1- \alpha_1  ) v_D)$  in $\mathbb X \times \mathbb X$ as $t\rightarrow \infty$.
\end{itemize}

Therefore, the proof of {\it Case I} is complete. Obviously, {\it Case II} can be proved in the same way.

At the end, let us handle {\it Case III} when both $u(x,t)$ and $v(x,t)$ weakly converge to zero  in $L^2(\Omega)$.
Indeed we will show that {\it Case III} cannot happen. We prepare the following proposition first.
\begin{prop}\label{prop-weakL2-zero}
Assume that \textbf{(C1)}-\textbf{(C3)} hold.
\begin{itemize}
\item[(i)] If $u(\cdot, t) \rightharpoonup 0$ in $L^2(\Omega)$ as $t\rightarrow\infty$,
then there exists $T_3>0$ such that $u(x,t)<u_d(x)$ in $\bar\Omega$ for $t\geq T_3$.
\item[(ii)] If $v(\cdot, t) \rightharpoonup 0$ in $L^2(\Omega)$ as $t\rightarrow\infty$,
then there exists $T_4>0$ such that $v(x,t)<v_D(x)$ in $\bar\Omega$ for $t\geq T_4$.
\end{itemize}
\end{prop}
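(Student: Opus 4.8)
The plan is to prove (i) only, since (ii) follows by the symmetric argument with the roles of $u,v$ and the kernels $k,p$ interchanged. The proof rests on two ingredients. First, the orbit $\{u(\cdot,t)\}$ is bounded in $\mathbb X$, hence in $L^2(\Omega)$, and the linear map $\phi\mapsto \int_\Omega k(x,y)\phi(y)\,dy$ is compact from $L^2(\Omega)$ into $\mathbb X$ because $k$ is continuous (Cauchy–Schwarz gives equicontinuity of the image on bounded sets). A compact operator sends weakly convergent sequences to strongly convergent ones, so the hypothesis $u(\cdot,t)\rightharpoonup 0$ forces
\[
d\int_\Omega k(x,y)u(y,t)\,dy \longrightarrow 0 \quad\text{in } \mathbb X \text{ as } t\to\infty ;
\]
indeed, were this to fail along some $t_j\to\infty$, compactness would extract a subsequence along which the nonlocal term converges strongly to a nonzero limit, forcing that limit to be $0$, a contradiction. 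Second, the single–species identity (\ref{pf-int-notzero}) gives $m(x)-d a_d(x)-u_d(x)<0$ on $\bar\Omega$, and since $u_d>0$ is continuous on the compact set $\bar\Omega$ one obtains two positive constants $c_1:=\min_{\bar\Omega} d\int_\Omega k(x,y)u_d(y)\,dy>0$ and $\kappa:=\min_{\bar\Omega}(u_d+da_d-m)>0$.

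Next I would set $P(x,t)=u(x,t)-u_d(x)$ and, subtracting the equation (\ref{singled}) satisfied by $u_d$ from the equation for $u$, rewrite the evolution of $P$ as
\[
P_t = d\int_\Omega k(x,y)P(y,t)\,dy + (m-da_d-u_d)P - uP - c\,uv .
\]
The goal is to estimate the right-hand side where $P(x,t)\ge 0$. Writing the nonlocal term as $d\int k\,u\,dy - d\int k\,u_d\,dy$ and choosing $T$ (via the uniform convergence above) so that $d\int_\Omega k(x,y)u(y,t)\,dy\le c_1/2$ for all $x$ and all $t\ge T$, this term is bounded above by $c_1/2-c_1=-c_1/2$ for every $x$ once $t\ge T$. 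Where $P(x,t)\ge 0$ the remaining terms are nonpositive: $(m-da_d-u_d)P\le -\kappa P\le 0$ by the sign fact, while $-uP\le 0$ and $-c\,uv\le 0$ because $u,v\ge 0$. Hence I obtain the crucial pointwise bound
\[
P_t(x,t)\le -\tfrac{c_1}{2}<0 \qquad\text{whenever } t\ge T \text{ and } P(x,t)\ge 0 .
\]

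Finally I would run an envelope argument on $\Phi(t):=\max_{x\in\bar\Omega}P(x,t)$. Since $t\mapsto u(\cdot,t)$ is $C^1$ into $\mathbb X$ with $u_t$ uniformly bounded, $\Phi$ is locally Lipschitz and a Danskin-type estimate gives $D^+\Phi(t)\le P_t(x_t,t)$ at any maximizer $x_t$. Evaluating the inequality for $P_t$ at $x_t$ and using that $m-da_d-u_d$ and $u$ are uniformly bounded, a small slack allows one to absorb the terms that lose their sign once $P<0$: for a suitably small $\delta_0>0$ one gets $D^+\Phi(t)\le -c_1/4$ whenever $t\ge T$ and $\Phi(t)\ge -\delta_0$. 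Thus $\Phi$ decreases at a definite rate while it stays $\ge -\delta_0$, drops below $-\delta_0$ in finite time, and can never climb back, since at any instant with $\Phi=-\delta_0$ its right derivative remains strictly negative. This produces $T_3>0$ with $\Phi(t)<-\delta_0<0$, i.e. $u(x,t)<u_d(x)$ on $\bar\Omega$ for all $t\ge T_3$. The main obstacle is precisely the absence of spatial regularity, which rules out a genuine spatial maximum principle: the favorable sign of $P_t$ is available only at points where $P$ is (essentially) nonnegative, not on a whole neighborhood, so the conclusion must be extracted by tracking the time evolution of the spatial maximum $\Phi(t)$ and by upgrading the weak $L^2$ decay of $u$ to uniform decay of the nonlocal term through compactness of the integral operator—both devices replacing the compactness that the local model would supply automatically.
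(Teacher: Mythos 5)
Your proof is correct, and its first half coincides with the paper's own argument: both upgrade the weak $L^2$ decay of $u$ to uniform decay of the nonlocal term $d\int_\Omega k(\cdot,y)u(y,t)\,dy$ in $L^\infty(\Omega)$ (the paper dismisses this as ``routine''; your complete-continuity argument via compactness of $\phi\mapsto\int_\Omega k(\cdot,y)\phi(y)\,dy$ from $L^2(\Omega)$ to $\mathbb X$ is a clean justification), and both extract the crucial sign $m-da_d-u_d<0$ from the steady-state equation (\ref{singled}), i.e.\ from (\ref{pf-int-notzero}). Where you genuinely diverge is the endgame. The paper never forms the difference $P=u-u_d$ or a maximum function: it works pointwise in $x$ with the differential inequality $u_t(x,t)\le d\int_\Omega k(x,y)u(y,t)\,dy+u(u_d-2\ell-u)$, shows for each fixed $x$ that the sublevel set $\{u(x,\cdot)<u_d(x)-\ell\}$ is forward invariant in time, and then derives a contradiction by noting that if some $\hat x$ stayed above $u_d(\hat x)-\ell$ on all of $[T_0,T_0+C_1/\epsilon]$, the resulting bound $u_t(\hat x,t)\le-\epsilon$ would force $u(\hat x,\cdot)\le 0$, violating positivity. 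You instead use the exact equation for $P$ and run a drift-plus-barrier argument on the spatial maximum $\Phi(t)=\max_{\bar\Omega}P(\cdot,t)$, with a Danskin-type bound on $D^+\Phi$ and a $\delta_0$-slack to absorb the terms that lose their sign when $P<0$. Both routes work and are of comparable length; the paper's is more elementary (no Dini derivatives of a max function, no envelope theorem) and its contradiction leans on positivity of $u$, whereas yours packages the mechanism in one stroke, needs only boundedness of $u$ rather than positivity, and delivers the uniform gap $u\le u_d-\delta_0$ directly (the paper's argument yields the analogous gap $u<u_d-\ell$). One small wording point: the Danskin estimate bounds $D^+\Phi(t)$ by $\max_{x\in M(t)}P_t(x,t)$ over the set $M(t)$ of maximizers, not by the value ``at any maximizer''; this is harmless here because your estimate $P_t\le-c_1/4$ holds at every point of $M(t)$.
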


\begin{proof}
(i) Choose
$$
0<\ell \leq {1\over 2} \min \left\{\min_{ \bar\Omega}  \frac{d  \int_{\Omega}  k(x,y)  u_d(y) dy}{ u_d(x)},\ \min_{\bar\Omega} u_d  \right\}.
$$
It follows from the equations satisfied by $u$ and $u_d$ that
\begin{eqnarray}\label{pf-prop-zero-ut}
u_t(x,t) &= & d\int_{\Omega}  k(x,y) u(y,t) dy +u(m(x)- d a_d(x) -u-cv)\cr
   & = &  d\int_{\Omega}  k(x,y) u(y,t) dy + u\left(  u_d - \frac{d  \int_{\Omega}  k(x,y)  u_d(y) dy}{ u_d(x)} -u-cv  \right)\cr
   &\leq &  d\int_{\Omega}  k(x,y) u(y,t) dy  + u(u_d - 2\ell -u).
\end{eqnarray}

Note that if $u(\cdot, t) \rightharpoonup 0$ in $L^2(\Omega)$ as $t\rightarrow\infty$, then it is routine to check that as $t\rightarrow\infty$,
$$
d\int_{\Omega}  k(\cdot ,y) u(y,t) dy  \rightarrow 0\ \ \textrm{in} \ L^{\infty}( \Omega).
$$
Denote $\epsilon = {\ell\over 4} \min_{\bar\Omega} u_d$. There exists $T_0>0$ such that
for $t\geq T_0$,
\begin{equation}\label{pf-prop-hatT}
d\int_{\Omega}  k(x ,y) u(y,t) dy \leq \epsilon\ \ \textrm{in} \ \bar\Omega.
\end{equation}
Denote $C_1 = \| u(x,t)\|_{L^{\infty}(\bar\Omega\times [0,\infty))}<\infty$. We claim that {\it $u(x,t) < u_d(x)- \ell$ in $\bar\Omega$ for $t\geq T_0+ C_1/\epsilon$.}

Suppose that the claim is not true, i.e., there exist $\hat x\in\bar\Omega$ and $\hat t\geq T_0+ C_1/\epsilon$ such that $u(\hat x,\hat t) \geq u_d(\hat x)- \ell$.

First, fix $x\in\bar\Omega$, we show that  if for some $t_0\geq T_0$, $u(x,t_0) < u_d(x)- \ell$, then $u(x,t) < u_d(x)- \ell$ for $t\geq t_0$. Otherwise, if there exists $t_1>t_0$ such that $u(x,t_1) = u_d(x)- \ell$ and $u(x,t) < u_d(x)- \ell$ for $t_0<t<t_1$, then by (\ref{pf-prop-zero-ut}) and (\ref{pf-prop-hatT}),
\begin{eqnarray*}
0\leq u_t(x,t_1) = &\leq &  d\int_{\Omega}  k(x,y) u(y,t_1) dy  + u(x,t_1)(u_d - 2\ell -u(x,t_1)) \\
&\leq &   \epsilon -\ell (u_d(x)- \ell) \leq {\ell\over 4} \min_{\bar\Omega} u_d- {\ell\over 2} \min_{\bar\Omega} u_d = -\epsilon <0,
\end{eqnarray*}
which is impossible.

Now one sees that for any $t\in [T_0,  T_0+ C_1/\epsilon] $, $u(\hat x,  t) \geq u_d(\hat x)- \ell$. Then by (\ref{pf-prop-zero-ut}) and (\ref{pf-prop-hatT}), when $t\in [T_0,  T_0+ C_1/\epsilon] $,
\begin{eqnarray*}
u_t(\hat x,t) &\leq &  d\int_{\Omega}  k(\hat x,y) u(y,t) dy  + u(\hat x,t)(u_d(\hat x) - 2\ell -u(\hat x, t)) \\
&\leq &  {\ell\over 4} \min_{\bar\Omega} u_d -\ell u(\hat x,t)\leq  {\ell\over 4} \min_{\bar\Omega} u_d -\ell (u_d(\hat x)- \ell) \leq -\epsilon.
\end{eqnarray*}
This gives that
$$
u(\hat x, T_0+ C_1/\epsilon) \leq  u(\hat x, T_0 )  -\epsilon C_1/\epsilon\leq 0,
$$
which contradicts to the positivity of $u$. The claim is proved and (i) follows.

Obviously, (ii) can be proved similarly.
\end{proof}

Thanks to Proposition \ref{prop-weakL2-zero}, w.l.o.g., assume that
$$
0<u_0 <u_d,\ 0<v_0<v_D\ \ \textrm{in}\ \bar\Omega.
$$
and {\it Steps 1, 2 and 3} in the proof of {\it Case I} can be repeated. Thus the solution  $(u,v)$ of (\ref{original}) approaches to  a steady state in $\{(su_d, (1-s)v_D),\ 0\leq s\leq 1\}$ in $\mathbb X \times \mathbb X$.  This is impossible since  $u(\cdot, t) \rightharpoonup 0$ and $v(\cdot, t) \rightharpoonup 0$ in $L^2(\Omega)$ as $t\rightarrow\infty$. Therefore, {\it Case III} cannot happen.

\section{Models with mixed dispersal strategies}
This section is devoted to the proof of Theorem \ref{thm-main-mix}, which is about the system (\ref{general-mix-N}).
The general approaches in handling Theorem \ref{thm-main-mix}(i), (ii) and (iii) are similar to that of Theorem \ref{thm-main}. To avoid being redundant, we only emphasize the places which are different.    If both equations in  (\ref{general-mix-N})   have local dispersals,  then the solution orbits are precompact and thus Theorem \ref{thm-main-mix}(iv) has been established in \cite{HS2006}. However, if local dispersal is only  incorporated into one equations in (\ref{general-mix-N}), additional techniques and adjustments are needed on the basis of the proof of Theorem \ref{thm-main-2}.

First of all, consider the linearized operators of (\ref{general-mix-N})  at $(\hat u_d, 0)$ and $(0, \hat v_D)$. If $\beta=1$, $\mu_{(\hat u_d,0)}$ is defined in the same way as in (\ref{PEV}). If $0\leq \beta<1$, $\mu_{(\hat u_d,0)}$ denotes the principal eigenvalue of  the eigenvalue problem
\begin{equation*}
\begin{cases}
D \left\{\beta \mathcal{P}[\psi] +(1-\beta) \Delta\psi  \right\}+[M(x)-b\hat u_d]\psi =\mu\psi  & \textrm{in}\ \bar\Omega,\\
\partial\psi/\partial \gamma =0  & \textrm{on}\ \partial\Omega.
\end{cases}
\end{equation*}
The definition for $\nu_{(0, \hat v_D)}$ is similar.
Propositions \ref{prop-localstability} and \ref{prop-steadystates} still hold for the system (\ref{general-mix-N}).
\begin{prop}\label{prop-localstability-mixed}
Assume that \textbf{(C1)}-\textbf{(C4)} hold and (\ref{general-condition}) is valid. Then there exist exactly four alternatives as follows.
\begin{itemize}
\item[(i)] $\mu_{(\hat u_d,0)}>0$,  $\nu_{(0, \hat v_D)}>0$;
\item[(ii)] $\mu_{(\hat u_d,0)}>0$,  $\nu_{(0, \hat v_D)}\leq 0$;
\item[(iii)] $\mu_{(\hat u_d,0)}\leq 0$,  $\nu_{(0, \hat v_D)}>0$;
\item[(iv)] $\mu_{(\hat u_d,0)}= \nu_{(0, \hat v_D)}=0$.
\end{itemize}
Moreover,  $(iv)$  holds if and only if $b(x), c(x), b_1(x), c_1(x)$ are constants, $bc=b_1 c_1$ and $b\hat u_d= c_1 \hat v_D$.
\end{prop}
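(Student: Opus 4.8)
The plan is to mirror the proof of Proposition \ref{prop-localstability}, replacing the single scalar multiplier ``$b^3$'' and the scalar inequality $bc\le1$ by \emph{pointwise extremal constants}, so that the variability of the four coefficients gets squeezed out precisely in the equality case. As in Proposition \ref{prop-localstability}, it suffices to treat the situation in which none of (i), (ii), (iii) holds, namely $\mu_{(\hat u_d,0)}\le0$ and $\nu_{(0,\hat v_D)}\le0$ simultaneously, and to deduce that all four coefficients are constant, $bc=b_1c_1$, $b\hat u_d=c_1\hat v_D$, and $\mu_{(\hat u_d,0)}=\nu_{(0,\hat v_D)}=0$. This both shows the four alternatives are exhaustive (there is no ``$\mu<0,\nu<0$'' or ``$\mu=0,\nu<0$'' case) and characterizes (iv); the converse is the easy direction.

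First I would record the variational characterizations. Since \textbf{(C3)} holds and the Neumann Laplacian is self-adjoint, for every $\beta\in[0,1]$ the operator defining $\mu_{(\hat u_d,0)}$ is self-adjoint, so
$$
\mu_{(\hat u_d,0)}=\sup_{\psi\neq0}\frac{\int_\Omega\big[\psi\,D\{\beta\mathcal P[\psi]+(1-\beta)\Delta\psi\}+(M-b\hat u_d)\psi^2\big]\,dx}{\int_\Omega\psi^2\,dx}\le0,
$$
taken over $H^1(\Omega)$ when $\beta<1$ and over $L^2(\Omega)$ when $\beta=1$; the $\Delta$-term only contributes $-(1-\beta)D\int_\Omega|\nabla\psi|^2\,dx$, which I never need to isolate. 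Testing with $\psi=\hat v_D$ and substituting its equation $D\{\beta\mathcal P[\hat v_D]+(1-\beta)\Delta\hat v_D\}=-\hat v_D(M-c_1\hat v_D)$ collapses the numerator to $\int_\Omega\hat v_D^2(c_1\hat v_D-b\hat u_d)\,dx$, giving $\int_\Omega(c_1\hat v_D^3-b\hat u_d\hat v_D^2)\,dx\le0$; symmetrically $\nu_{(0,\hat v_D)}\le0$ yields $\int_\Omega(b_1\hat u_d^3-c\hat v_D\hat u_d^2)\,dx\le0$. These are the exact analogues of (\ref{pf-localstability1})--(\ref{pf-localstability2}).

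Next I would bound the variable coefficients by their extrema to re-enter the scalar world. From the first inequality, $\underline{c_1}\int_\Omega\hat v_D^3\le\int_\Omega c_1\hat v_D^3\le\int_\Omega b\hat u_d\hat v_D^2\le\bar b\int_\Omega\hat u_d\hat v_D^2$, and from the second $\underline{b_1}\int_\Omega\hat u_d^3\le\bar c\int_\Omega\hat v_D\hat u_d^2$, where $\bar b=\max_{\bar\Omega}b$, $\underline{c_1}=\min_{\bar\Omega}c_1$, etc. Writing $A=\bar b\,\hat u_d$ and $B=\underline{c_1}\,\hat v_D$, I multiply the first chain by the constant $\underline{c_1}^{\,2}$ and the second by the constant $\bar b^{3}/\underline{b_1}$ to obtain $\int_\Omega B^3\le\int_\Omega AB^2$ and $\int_\Omega A^3\le\int_\Omega A^2B$, the latter using $\bar b\bar c\le\underline{b_1}\,\underline{c_1}$, which is precisely (\ref{general-condition}). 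Adding gives $\int_\Omega(A-B)^2(A+B)\,dx\le0$, and since $A+B>0$ we conclude $A\equiv B$, i.e.\ $\bar b\hat u_d=\underline{c_1}\hat v_D$.

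The payoff, and the point that is genuinely new relative to Proposition \ref{prop-localstability}, is the equality analysis. Since two nonpositive integrals sum to the nonnegative $\int_\Omega(A-B)^2(A+B)\,dx=0$, each intermediate inequality must be an equality: equality in $\underline{c_1}\int\hat v_D^3=\int c_1\hat v_D^3$ forces $(c_1-\underline{c_1})\hat v_D^3\equiv0$, hence $c_1\equiv\underline{c_1}$ is constant, and likewise $b\equiv\bar b$, $b_1\equiv\underline{b_1}$, $c\equiv\bar c$; equality in the use of (\ref{general-condition}) gives $bc=b_1c_1$, and $A\equiv B$ becomes $b\hat u_d=c_1\hat v_D$. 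Finally $\mu_{(\hat u_d,0)}=0$, because with $b\hat u_d=c_1\hat v_D$ the test function $\hat v_D$ makes the Rayleigh quotient vanish, so the supremum is both $\le0$ and $\ge0$; the converse direction is then immediate, since $\hat v_D>0$ is an eigenfunction of the linearized operator with eigenvalue $0$, which by the principal-eigenvalue theory of \cite{LiCovilleWang} sits at the top of the spectrum. I expect the only delicate part to be the bookkeeping of these equality cases, so that constancy of all four coefficients is extracted simultaneously; the inequalities themselves are as in the symmetric-kernel setting of Proposition \ref{prop-localstability}, and the Laplacian causes no difficulty because it enters the quadratic form with a favorable sign and is eliminated through the steady-state equation.
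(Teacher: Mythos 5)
Your proof is correct and is essentially the paper's own argument: the paper omits the proof of this proposition, stating it is the same as that of Proposition \ref{prop-localstability}, and your adaptation --- testing the Rayleigh quotients with $\hat v_D$ and $\hat u_d$, substituting the steady-state equations so the Laplacian drops out, replacing the variable coefficients by their extrema, and running the scalar $(A-B)^2(A+B)$ computation --- is exactly that adaptation, matching what the paper does explicitly in (\ref{important-general-bc}) of Proposition \ref{prop-steadystates-mixed}. The equality bookkeeping you highlight as the new ingredient (forcing $b,c,b_1,c_1$ constant, $bc=b_1c_1$, $b\hat u_d=c_1\hat v_D$) is likewise the step the paper invokes there via ``the arguments after (\ref{important})'', so nothing in your route departs from the intended one.
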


The proof of Proposition \ref{prop-localstability-mixed} is the same as that of Proposition \ref{prop-localstability} and thus we omit the details.

\begin{prop}\label{prop-steadystates-mixed}
Assume that \textbf{(C1)}-\textbf{(C4)} hold and (\ref{general-condition}) is valid. Then the system (\ref{general-mix-N}) admits two strictly ordered continuous positive steady states $(u,v)$ and $(u^*,v^*)$ (that is w.l.o.g., $u>u^*$, $v<v^*$) if and only if $bc=b_1 c_1$, $b \hat u_d=c_1 \hat v_D$. Moreover, all the positive steady states of (\ref{general-mix-N}) consist of $(s\hat u_d, (1-s)\hat v_D)$, $0<s<1$.
\end{prop}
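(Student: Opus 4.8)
The plan is to follow the template of Proposition~\ref{prop-steadystates}, adapting it to the mixed operators $\alpha\mathcal K+(1-\alpha)\Delta$ and $\beta\mathcal P+(1-\beta)\Delta$. The ``if'' direction is the routine verification already indicated in Proposition~\ref{prop-steadystates}: assuming $bc=b_1c_1$ and $b\hat u_d=c_1\hat v_D$ (which together also force $b_1\hat u_d=c\hat v_D$), one checks directly from the equations for the semi-trivial states that each $(s\hat u_d,(1-s)\hat v_D)$ solves (\ref{general-mix-N}), these two relations being exactly what both equations require. So I concentrate on the ``only if'' direction. Given two strictly ordered positive steady states with $u>u^*$, $v<v^*$, set $w=u-u^*>0$, $z=v-v^*<0$, and form the two cross-identities $u^*\cdot(\text{eqn for }u)-u\cdot(\text{eqn for }u^*)$ and $v^*\cdot(\text{eqn for }v)-v\cdot(\text{eqn for }v^*)$. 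The first gives
\[
d\bigl\{\alpha(u^*\mathcal K[u]-u\mathcal K[u^*])+(1-\alpha)(u^*\Delta u-u\Delta u^*)\bigr\}=uu^*(b_1w+cz),
\]
and the second is the analogous identity for $v$ with right-hand side $vv^*(bw+c_1z)$.

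The essential modification occurs here. In the purely nonlocal setting one tests against $w^2/(uu^*)$; but that multiplier gives the \emph{wrong} sign for the Laplacian part, so it cannot be reused. Instead I test the $u$-identity against the D\'iaz--Saa multiplier $\tfrac{u}{u^*}-\tfrac{u^*}{u}=\tfrac{u^2-(u^*)^2}{uu^*}$ and the $v$-identity against $\tfrac{v}{v^*}-\tfrac{v^*}{v}$. With this choice both contributions are nonpositive: the nonlocal term by the symmetry $k(x,y)=k(y,x)$ together with the monotonicity of $t\mapsto t-1/t$ (the computation is parallel to the one producing (\ref{pf-key}), but now with this antisymmetric integrand), and the local term by the D\'iaz--Saa/Picone inequality $\int_\Omega\bigl(\tfrac{\Delta u}{u}-\tfrac{\Delta u^*}{u^*}\bigr)(u^2-(u^*)^2)\le0$, whose equality case holds \emph{iff} $u\propto u^*$. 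This yields the two integral inequalities
\[
\int_\Omega(b_1w+cz)\,\bigl(u^2-(u^*)^2\bigr)\,dx\le0,\qquad \int_\Omega(bw+c_1z)\,\bigl(v^2-(v^*)^2\bigr)\,dx\le0.
\]

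To finish I would exploit (\ref{general-condition}). Writing $P=u+u^*$, $Q=v+v^*$, $\zeta=-z>0$ and bounding the coefficients by their extrema turns these into $\underline{b_1}\int w^2P\le\bar c\int w\zeta P$ and $\underline{c_1}\int\zeta^2Q\le\bar b\int w\zeta Q$; multiplying and using $\bar b\,\bar c\le\underline{b_1}\,\underline{c_1}$ produces a Cauchy--Schwarz-type relation among the weighted integrals. The equality analysis would then force, on one side, the coefficients to attain their extrema a.e.\ (hence to be constant with $bc=b_1c_1$), and on the other side equality in D\'iaz--Saa and in the symmetrized nonlocal form, i.e.\ $u^*=\alpha u$, $v^*=\beta v$ for constants $0<\alpha<1<\beta$. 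Substituting these proportionalities back into the steady-state equations gives $b_1(1-\alpha)u=c(\beta-1)v$ and $b(1-\alpha)u=c_1(\beta-1)v$, whence $bc=b_1c_1$; the uniqueness of the single-species positive state (Theorem~\ref{thm-single}) then yields $u=s_0\hat u_d$, $v=(1-s_0)\hat v_D$ with $b\hat u_d=c_1\hat v_D$, and the family $\{(s\hat u_d,(1-s)\hat v_D)\}$ exhausts the positive steady states.

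I expect the combination step to be the main obstacle. In the symmetric purely nonlocal case (Proposition~\ref{prop-steadystates}) the two energy inequalities have \emph{opposite} signs---a consequence of the multiplier $w^2/(uu^*)$ being positive, so that the ordering $u>u^*$, $v<v^*$ passes through---and this is precisely what drives the clean algebraic cancellation ending in $\int(w+cz)^2(cz-w)\le0$. Once the Laplacian is present one is forced onto the D\'iaz--Saa multiplier, under which \emph{both} operator terms are nonpositive irrespective of the ordering; the rigidity must then be recovered from the \emph{equality characterization} of the D\'iaz--Saa and symmetrized-nonlocal inequalities together with the max/min hypothesis (\ref{general-condition}). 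The genuinely delicate point is that the two inequalities now carry \emph{different} weights $u+u^*$ and $v+v^*$, which the opposite-sign mechanism reconciled automatically in the nonlocal case; it is exactly here that the strength of (\ref{general-condition}) (max/min, rather than merely pointwise $bc\le b_1c_1$) has to be used, and closing this reconciliation carefully is the crux of the argument.
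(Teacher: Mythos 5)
Your starting premise is wrong, and it is precisely what pushes you off the viable path. The multiplier $w^{2}/(uu^{*})$ does \emph{not} give the wrong sign on the Laplacian part: with the no-flux boundary condition, integration by parts yields the identity
\begin{equation*}
\int_{\Omega}\bigl(u^{*}\Delta u-u\,\Delta u^{*}\bigr)\,\frac{(u-u^{*})^{2}}{u\,u^{*}}\,dx
=\int_{\Omega}\bigl|u\nabla u^{*}-u^{*}\nabla u\bigr|^{2}\left(\frac{1}{u^{2}}-\frac{1}{(u^{*})^{2}}\right)dx,
\end{equation*}
which is nonpositive precisely when $u>u^{*}$ (and nonnegative when $u<u^{*}$), i.e.\ the local term carries exactly the same ordering-sensitive sign as the symmetrized nonlocal term. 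This is what the paper's proof does: it keeps the multipliers $w^{2}/(uu^{*})$ and $z^{2}/(vv^{*})$, so each mixed operator contributes a single signed quantity, and one lands on the opposite-sign pair (\ref{prop-important-general}), namely $\int_{\Omega}(b_{1}(x)w+c(x)z)w^{2}\,dx\le 0$ and $\int_{\Omega}(b(x)w+c_{1}(x)z)z^{2}\,dx\ge 0$, with the \emph{same} weights $w^{2}$, $z^{2}$ as in Proposition \ref{prop-steadystates}; replacing coefficients by their extrema and invoking (\ref{general-condition}), the algebra after (\ref{important}) then applies verbatim and yields the rigidity ($b,c,b_{1},c_{1}$ constant, $bc=b_{1}c_{1}$, $b\hat u_{d}=c_{1}\hat v_{D}$).

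By switching to the D\'iaz--Saa multiplier you destroy exactly the structure that makes the conclusion reachable. Your two inequalities are individually correct, but they come out with the \emph{same} sign and with \emph{different} weights $P=u+u^{*}$ and $Q=v+v^{*}$; as you concede, the combination step is then the crux, and in fact it cannot be closed. Writing $\zeta=-z>0$ and bars/underlines for maxima/minima over $\bar\Omega$, your reduced inequalities $\underline{b_{1}}\int w^{2}P\le\bar c\int w\zeta P$ and $\underline{c_{1}}\int\zeta^{2}Q\le\bar b\int w\zeta Q$, multiplied together and combined with $\bar b\,\bar c\le\underline{b_{1}}\,\underline{c_{1}}$, give only $\int w^{2}P\,\int\zeta^{2}Q\le\int w\zeta P\,\int w\zeta Q$; after Cauchy--Schwarz this reduces to $\bigl(\int w^{2}P\,\int\zeta^{2}Q\bigr)^{1/2}\le\bigl(\int\zeta^{2}P\,\int w^{2}Q\bigr)^{1/2}$, a harmless inequality that can hold strictly. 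No forced equality results, so there is no ``equality case'' of D\'iaz--Saa or of the symmetrized nonlocal form left to analyze, and the rigidity conclusion never gets off the ground. (With equal weights $P=Q$ the same computation would force equality in Cauchy--Schwarz---that is the degenerate shadow of the opposite-sign mechanism---but here the weights are genuinely different.) The repair is simply to return to the original multiplier: the displayed identity shows the Laplacian poses no sign obstruction at all, which is the one observation your proposal is missing.
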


\begin{proof}
Set $w = u- u^*>0$ and $z= v-v^*<0$. Following the proof of Proposition \ref{prop-steadystates}, we only explain how to obtain the following two important inequalities:
\begin{equation}\label{prop-important-general}
\int_{\Omega}\left(b_1(x)w+c(x)z\right)w^2 dx\leq 0,\ \ \ \int_{\Omega}\left(b(x)w+c_1(x)z\right)z^2 dx\geq 0.
\end{equation}

For this purpose, first, similar to (\ref{pf-nonpositive}), it is routine to check that
\begin{eqnarray}\label{pf-3}
&& d\int_{\Omega}\left(-u \left\{\alpha \mathcal{K}[ u^*]+(1-\alpha) \Delta   u^* \right\}  + u^* \left\{\alpha \mathcal{K}[u]+(1-\alpha) \Delta u \right\} \right) \frac{w^2}{u u^*} dx \cr
&=&  \int_{\Omega}(b_1(x)w+c(x)z)w^2 dx.
\end{eqnarray}
Then due to \textbf{(C3)},    the left hand side of  (\ref{pf-3}) is calculated as follows
\begin{eqnarray*}
&&  d\int_{\Omega}\left(-u \left\{\alpha \mathcal{K}[ u^*]+(1-\alpha) \Delta   u^* \right\}  + u^* \left\{\alpha \mathcal{K}[u]+(1-\alpha) \Delta u \right\} \right) \frac{w^2}{u u^*} dx  \\
&=& d\alpha \int_{\Omega}\int_{\Omega} k(x,y)\left[ u^*(x)u(y)-u(x)u^*(y) \right] \frac{(u(x)- u^*(x))^2}{u(x) u^*(x) } dy dx\\
&& + d(1-\alpha) \int_{\Omega} \left( -u \Delta u^*+  u^* \Delta u\right)\frac{(u(x)- u^*(x))^2}{u(x) u^*(x) } dx\\
&=&  {d\over 2}\int_{\Omega}\int_{\Omega} k(x,y)\left[ u^*(x)u(y)-u(x) u^*(y) \right]^2\left(  \frac{1}{u(x) u(y) } -\frac{1}{ u^*(x) u^*(y) }  \right) dy dx\\
&& + d(1-\alpha) \int_{\Omega} | u(x) \nabla  u^*(x) - u^*(x) \nabla u(x)|^2\left(  \frac{1}{u^2(x) } -\frac{1}{ (u^*)^2(x) }  \right)dx\\
&\leq & 0.
\end{eqnarray*}
Thus
$$
\int_{\Omega}\left(b_1(x)w+c(x)z\right)w^2 dx\leq 0,
$$
while the other inequality in (\ref{prop-important-general}) can be handled similarly.

Obviously, since $w>0$ and $z<0$, (\ref{prop-important-general}) implies that
\begin{equation}\label{important-general-bc}
\int_{\Omega}\left([\min_{\bar\Omega}b_1] w+[\max_{\bar\Omega}c]z\right)w^2 dx\leq 0,\ \ \ \int_{\Omega}\left([\max_{\bar\Omega}b]w+[\min_{\bar\Omega}c_1]z\right)z^2 dx\geq 0.
\end{equation}
Now  the arguments after (\ref{important}) can be applied to show that  $b(x), c(x), b_1(x), c_1(x)$ must be constants, $bc=b_1c_1$ and $bu^*_d = c_1v^*_D$.
\end{proof}

Now we are ready to continue the   proof of Theorem \ref{thm-main-mix}.
First, Theorem \ref{thm-main-mix}(i), (ii) and (iii) can be handled by the same approach employed in the proof of Theorem \ref{thm-main}.
Secondly, according to Proposition \ref{prop-localstability-mixed}(iv),  when   both $(\hat u_d, 0)$ and $(0,\hat v_D)$ are locally stable or neutrally stable, then $b(x), c(x), b_1(x), c_1(x)$ must be constants, $bc=b_1c_1$, $b\hat{u}_d = c_1\hat{v}_D$ and the system (\ref{general-mix-N}) has a continuum of steady states $\{(s\hat{u}_d, (1-s)\hat{v}_D),\ 0\leq s\leq 1\}$.
It remains to verify the global convergence of solutions of (\ref{general-mix-N}).

For clarity, we divide it into three cases.

\noindent{\it Case 1: $\alpha= \beta =1$.} This corresponds to the  system (\ref{original}) and has been proved in Theorem \ref{thm-main-2} already.

\noindent{\it Case 2: $0\leq \alpha,\beta <1$, i.e, local dispersals are incorporated into both equations of the system (\ref{general-mix-N}). }  Then the solution orbit  $\{ (u(\cdot, t), v(\cdot, t)) \ |\  t\geq 0 \}$ is precompact in $L^{\infty}(\Omega) \times L^{\infty}(\Omega)$. Moreover, it is standard to verify that $(0,0)$ is locally unstable due to the existence of $\hat u_d$ and $\hat v_D$. Therefore, the conclusion follows from the arguments in the proof of  \cite[Theorem 3]{HS2006}.

\noindent{\it Case 3: $\alpha =1$, $0\leq \beta <1$ or $0\leq \alpha  <1$, $\beta =1$, i.e. local dispersal is only incorporated into one equation of system (\ref{general-mix-N}).} We only prove the case $\alpha =1$, $0\leq \beta <1$, since the other one can be handled in the same way.

The rest of this section is devoted to the proof of {\it Case 3}.  We will mainly follow  the structure of the proof  of Theorem \ref{thm-main-2}. However,     the introduction of local dispersal to only one equation  causes extra obstacles and some new ideas are needed to overcome these difficulties. For clarity, we focus on the following system
\begin{equation}\label{general-local+nonlocal}
\begin{cases}
u_t= d   \mathcal{K} [u]  +u(m(x)-b_1 u- c v) &\textrm{in } \Omega\times[0,\infty),\\
v_t= D \mathcal{P}_{\beta}[v]  +v(M(x)-b u- c_1 v) &\textrm{in } \Omega\times[0,\infty),\\
(1-\beta) \partial v/\partial \gamma=0   &\textrm{on } \partial\Omega,\\
u(x,0)=u_0(x),~v(x,0)=v_0(x)  &\textrm{in } \Omega,
\end{cases}
\end{equation}
where $\mathcal{P}_{\beta}[v]= \beta \mathcal{P} [v]+(1-\beta) \Delta v $. Also, let $(u(x,t),v(x,t))$ denote the corresponding solution.

First of all, assume that $u(x,t)$ does not weakly converge to zero  in $L^2(\Omega)$ and  prepare the following proposition for system (\ref{general-local+nonlocal}), which is parallel to Proposition \ref{prop-weakL2-nonzero}(i). But the proof has to be modified since $v$ satisfies an equation with local dispersal now. To be more specific, the inequalities (\ref{pf-prop-nonloca1}) and (\ref{pf-prop-nonlocal2}) do not hold when local dispersal is incorporated.

\begin{prop}\label{prop-weakL2-nonzero-local+nonlocal}
Assume that \textbf{(C1)}, \textbf{(C2)}, \textbf{(C3)} hold. If $u(x,t)$ does not weakly converge to zero  in $L^2(\Omega)$,
then there exists $T_1>0$ such that $v(x,t)<\hat v_D(x)$ in $\bar\Omega$ for $t\geq T_1$.
\end{prop}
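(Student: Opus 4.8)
The plan is to mirror the proof of Proposition~\ref{prop-weakL2-nonzero}(i), keeping its first half essentially verbatim and replacing only the concluding pointwise differential inequality by a comparison argument adapted to the local part of the operator. First I would reduce the statement, exactly as in the nonlocal case, to producing a \emph{single} time $T_1$ at which $v(\cdot,T_1)<\hat v_D$ in $\bar\Omega$: once this holds, setting $\phi=\hat v_D-v$ and using $D\mathcal{P}_\beta[\hat v_D]=-\hat v_D(M-c_1\hat v_D)$ one finds $\phi_t\geq D\mathcal{P}_\beta[\phi]+\phi\,[M-c_1(\hat v_D+v)]+buv$ with $buv>0$, so $\phi>0$ is preserved for all later times by the comparison principle for the mixed operator $D\mathcal{P}_\beta$. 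The derivation of the uniform lower bound for $u$ is unaffected by the dispersal appearing in the $v$--equation, since $u$ still solves a purely nonlocal equation; thus the argument based on Lemma~\ref{lem-tricky}, together with the failure of weak convergence (a constant $a_0>0$ and $t_j\to\infty$ with $\int_\Omega u(x,t_j)\,dx>a_0$), yields constants $A_3>0$ and $\epsilon_1>0$ such that
$$
u(x,t)\geq A_3\quad\textrm{for } x\in\bar\Omega,\ t\in\big[t_j+\tfrac12+\tfrac{\epsilon_1}{2},\,t_j+\tfrac12+\epsilon_1\big],\ j\geq 1 .
$$

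Next I would record the upper barrier for $v$. Because $u>0$, the function $v$ is a subsolution of the single-species equation $v_t=D\mathcal{P}_\beta[v]+v(M-c_1v)$, whose solutions converge to $\hat v_D$ in $\mathbb X$ by the single-species theory; hence there are $h_j>0$ with $h_j\to0$ and $v(\cdot,t)\leq(1+h_j)\hat v_D$ for $t\geq t_j$. The heart of the proof, and the point where the local term forces a new idea, is to show $v(\cdot,t_j+\tfrac12+\epsilon_1)<\hat v_D$ for $j$ large. In the nonlocal setting one evaluates $v_t$ at a point where $v\geq v_D$ and bounds the dispersal contribution from above using $v\leq(1+h_j)v_D$; this is exactly what breaks for $(1-\beta)\Delta v$, which cannot be controlled by pointwise values. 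Instead I would build a separable supersolution $\Psi(x,t)=g(t)\hat v_D(x)$ on $[t_*,t^*]:=[t_j+\tfrac12+\tfrac{\epsilon_1}{2},\,t_j+\tfrac12+\epsilon_1]$ with $g(t_*)=1+h_j$. Using $D\mathcal{P}_\beta[\hat v_D]=-\hat v_D(M-c_1\hat v_D)$ and $u\geq A_3$, the supersolution requirement reduces to the scalar inequality $g'\geq g\,[c_1\hat v_D(1-g)-bA_3]$ for every $x$; choosing $g$ to solve $g'=g\,h(g)$ with $h(g)=\sup_{\bar\Omega}[c_1\hat v_D(1-g)-bA_3]$ makes $\Psi$ a genuine supersolution. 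Since $\Psi$ obeys the same boundary condition as $v$ in (\ref{general-local+nonlocal}) and $v(\cdot,t_*)\leq(1+h_j)\hat v_D=\Psi(\cdot,t_*)$, the comparison principle gives $v\leq g(t)\hat v_D$ on $[t_*,t^*]$.

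Finally I would analyze the ODE for $g$. One has $h(1)=-bA_3<0$, and while $g\geq1$ the term $c_1\hat v_D(1-g)\leq0$, so $g'\leq -bA_3 g\leq -bA_3$; thus $g$ decreases at a rate bounded below by the fixed constant $bA_3$, reaches the value $1$ within time at most $h_j/(bA_3)$, and then continues to decrease strictly. Because $h_j\to0$ while $A_3$ and $\epsilon_1$ are fixed, for all large $j$ we have $h_j/(bA_3)<\epsilon_1/2=t^*-t_*$, whence $g(t^*)<1$ and therefore $v(\cdot,t^*)\leq g(t^*)\hat v_D<\hat v_D$; taking $T_1=t_j+\tfrac12+\epsilon_1$ for such a $j$ completes the reduction. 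The main obstacle is precisely this replacement: the local Laplacian destroys the pointwise differential inequality that drove the nonlocal proof, so the decay must be encoded into a comparison function, and the delicate point is to guarantee that the supersolution falls below $\hat v_D$ within the \emph{fixed, short} window on which $u\geq A_3$ is available — which is exactly why the explicit, $j$--independent decay rate $bA_3$ near $g=1$ is essential.
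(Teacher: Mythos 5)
Your proposal is correct and takes essentially the same approach as the paper: the same reduction to exhibiting a single time at which $v<\hat v_D$ (then comparison preserves it), the same uniform lower bound $u\geq A_3$ on short windows obtained from Lemma \ref{lem-tricky} exactly as in Proposition \ref{prop-weakL2-nonzero}, the same barrier $(1+h_j)\hat v_D$, and the same key device of a separable supersolution $g(t)\hat v_D(x)$ compared against $v$ on the window where $u\geq A_3$. The only difference is cosmetic: the paper takes $g$ linear in $t$ with a small fixed slope $\sigma$ and verifies the supersolution inequality for large $j$, whereas you take $g$ as the exact solution of the reduced ODE, which makes the decay rate $bA_3$ explicit.
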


\begin{proof}
Since $u(\cdot, t) \not \rightharpoonup 0$ in $L^2(\Omega)$ as $t  \rightarrow \infty$ and $u(x,t)$ satisfies the equation with nonlocal dispersal only, the arguments in deriving  (\ref{pf-lowerbd-u})   can be applied word by word to indicate that
there exist a constant $B_1>0$, $\varepsilon _1>0$ and
a sequence $\{ \tau_j  \}_{j\geq 1}$ with $\tau_j \rightarrow \infty$ as $j\rightarrow \infty$ such that
\begin{equation}\label{pf-lowerbd-u-local+nonlocal}
u(x,t) \geq B_1\ \ \textrm{for}\  x\in \bar\Omega,\  t\in [\tau_j+{1\over 2}+ { \varepsilon _1\over 2}, \tau_j+{1\over 2} + \varepsilon _1], \ j\geq 1.
\end{equation}
Moreover, comparison principle implies that there exists a sequence $\{ \ell_j \}$ with $\ell_j>0$ and $\lim_{j\rightarrow \infty} \ell _j =0$ such that
\begin{equation}\label{pf-upperbd-v-local+nonlocal}
v(x,t) \leq (1+\ell _j) \hat v_D(x) \ \ \textrm{in} \  \bar\Omega\ \   \textrm{for}  \ t\geq \tau _j.
\end{equation}
Define
$$
V_1(x,t) = \left( 1- \sigma(t-\tau_j - {1\over 2} - { \varepsilon _1\over 2})\right)(1+\ell _j) \hat v_D(x),
$$
where $\sigma>0$ is to be determined later.

For $x\in \bar\Omega,\  t\in [\tau_j+{1\over 2}+ { \varepsilon _1\over 2}, \tau_j+{1\over 2} + \varepsilon _1], \ j\geq 1$, direct computation gives that
\begin{eqnarray*}
&& D \mathcal{P}_{\beta}[V_1]  +V_1(M(x)-b u- c_1 V_1)-\frac{\partial V_1}{\partial t}\\
&\leq & ( 1-  \sigma(t-\tau_j - {1\over 2} - { \varepsilon _1\over 2}))(1+\ell _j)\left( D \mathcal{P}_{\beta}[\hat v_D]  +\hat v_D(M(x)-b B_1- c_1 V_1) \right)  +  \sigma (1+\ell _j) \hat v_D \\
&\leq  &  \left( 1-  O( \sigma \varepsilon _1) \right )(1+\ell _j)\hat v_D \left(-b B_1 + O(\ell_j) + O( \sigma \varepsilon _1)  + O(\sigma\ell_j    \varepsilon_0)\right)+  \sigma (1+\ell _j) \hat v_D \\
&<&0
\end{eqnarray*}
if $\sigma$ is chosen to be small enough and $j$ is large enough.  Note that $\sigma$ is fixed now and we still have the freedom for the choice of $j$. Moreover, it is obvious that (\ref{pf-upperbd-v-local+nonlocal}) implies that
$$
v(x, \tau_j+{1\over 2}+ { \varepsilon _1\over 2})\leq  V_1(x, \tau_j+{1\over 2}+ { \varepsilon _1\over 2}) \ \ \textrm{in}\ \bar\Omega.
$$
Then thanks to comparison principle, it follows that
$$
v(x, \tau_j+{1\over 2}+  \varepsilon _1 )\leq  V_1(x, \tau_j+{1\over 2}+   \varepsilon _1 ) \ \ \textrm{in}\ \bar\Omega.
$$
Furthermore, it is routine to check that
\begin{eqnarray*}
&& V_1(x, \tau_j+{1\over 2}+   \varepsilon _1 ) = \left( 1- \sigma  {\varepsilon _1\over 2} \right)(1+\ell _j) \hat v_D(x)\\
 &\leq & \left(1-  \sigma  {\varepsilon _1\over 2} +\ell_j -\sigma  {\varepsilon _1\over 2}\ell_j  \right) \hat v_D(x)
 \leq  \left(1- {1\over 2} \sigma  {\varepsilon _1\over 2}   \right) \hat v_D(x)\ \ \textrm{in} \ \bar\Omega
\end{eqnarray*}
for $j$ sufficiently large.

The proof is complete.
\end{proof}

Now thanks to Proposition \ref{prop-weakL2-nonzero-local+nonlocal}, w.l.o.g., we could assume that
$u_0>0$, $0<v_0< \hat v_D$ in $\bar\Omega$ and define
$$
\hat\theta (t) = \sup \{ \theta\ | \ u(x,t)>\theta \hat u_d(x), v(x,t)<(1-\theta ) \hat v_D(x) \ \textrm{in}\ \bar\Omega  \}.
$$
Moreover,   $0<\hat \theta(0) <1$, $\hat \theta(t)$ is increasing in $t$ due to comparison principle and denote
$$
\hat \theta_* = \lim_{t\rightarrow \infty} \hat\theta (t) \leq 1.
$$
As explained before {\it Step 1} in Section 4, when $\hat \theta_* = 1$, one has
$$
\lim_{t\rightarrow \infty }(u(\cdot, t), v(\cdot, t)) =(   \hat u_d, 0)\ \ \textrm{in}\ \mathbb X \times \mathbb X.
$$

Let us consider the case that $\hat \theta_* < 1$. To make   the arguments transparent, we discuss it step by step.

\noindent{\it Step 1'.} Considering how (\ref{prop-important-general}) is verified, similar to the arguments in {\it Step 1} in Section 4, we obtain that   there exists a subsequence $\{\tau_j\}_{j\geq 1} $ with $\tau_j \rightarrow \infty$ as $j\rightarrow \infty$ and $\hat\alpha_1\in [0,1]$ such that
$$
\lim_{j\rightarrow \infty }(u(\cdot, \tau_j), v(\cdot, \tau_j)) =(\hat\alpha_1 \hat u_d, (1-\hat\alpha_1 ) \hat v_D)\ \ \textrm{in}\ L^2(\Omega) \times L^2(\Omega).
$$

\noindent{\it Step 2'.}  Similar to {\it Step 2} in Section 4, to prove that $(u(\cdot, t), v(\cdot, t))$ converges  in $L^2(\Omega) \times L^2(\Omega)$, one needs to show that $\hat\theta_* =\hat\alpha_1$.  For this purpose, suppose that $\hat\theta_* < \hat\alpha_1$ and  a contradiction will be derived.

According to the definition of $\hat \theta_*$, for any $\delta>0$, there exists $\tau_{\delta}>0$ such that for $t \geq  \tau_{\delta}$,
\begin{equation}\label{pf-L2-}
u(x,t) > (\hat\theta_* -\delta) \hat u_d(x),\ v(x,t) < (1-\hat \theta_* +\delta )\hat v_D(x)\ \ \textrm{ in} \ \bar\Omega.
\end{equation}
We   claim that {\it there exist $\hat{\varepsilon} >0$, $\hat \delta>0$ and $\hat j\geq 1$ such that for $j\geq \hat j$},
\begin{equation}\label{pf-L2+}
 u(x,\tau_j+ \hat{\varepsilon} ) > (\hat{\theta}_* +\hat\delta) \hat u_d(x),\ v(x,\tau_j+ \hat\varepsilon ) < (1-\hat{\theta} _* - \hat\delta ) \hat v_D(x)\ \ \textrm{{\it in}} \ \bar\Omega.
\end{equation}

Obviously, for $u(x,t)$ in (\ref{general-local+nonlocal}), the same arguments in {\it Step 2} in Section 4 can be applied to show that there exist $\ell_1>0$, $j_1\geq 1$, $\varepsilon_1>0$ and $\delta_1>0$ such that
$$
u(x, \tau_j+\varepsilon_1) > (\hat{\theta}_* +\delta) \hat u_d(x)\ \ \textrm{in} \ \bar\Omega,
$$
provided that
$$
0 <\delta < \min \left\{ \delta_1,   {\ell_1\varepsilon_1\over 2 \max_{\bar\Omega}\hat u_d} \right\},\  j\geq j_1,\ \tau_j\geq \tau_{\delta}.
$$
Here fix $\delta= \delta_2>0$ satisfying the above inequality.

However, the arguments for $u(x,t)$ can not be applied to handle $v(x,t)$, since (\ref{pf-L2-3ell}), (\ref{pf-L2-2ell}), (\ref{pf-thm12-step2-u1}) and (\ref{pf-thm12-step2-u2}) are not valid when local dispersal is incorporated. The idea in the proof of Proposition \ref{prop-weakL2-nonzero-local+nonlocal} is borrowed here. We include the details for the convenience of readers.

Notice that  $\| u_t(\cdot, t) \|_{L^{\infty}(\Omega)}$ has an upper bound independent of $t\geq 0$, thus there exists $\varepsilon_2>0$ such that for $t\in [\tau_j+\varepsilon_1, \tau_j+\varepsilon_1+\varepsilon_2]$, $\tau_j\geq \tau_{\delta_2}$
$$
u(x, t) > (\hat{\theta}_* +{\delta_2 \over 2}) \hat u_d(x)\ \ \textrm{in} \ \bar\Omega.
$$

Define
$$
V_2(x,t) = \left( 1- \sigma_1(t-\tau_j -  \varepsilon _1 )\right)(1-\hat \theta_* +\delta ) \hat v_D(x),
$$
where $\sigma_1$ and $\delta $  are to be determined later.
For $x\in \bar\Omega,\  t\in [\tau_j+\varepsilon_1, \tau_j+\varepsilon_1+\varepsilon_2], \ \tau_j\geq \tau_{\delta}$, direct computation gives that
\begin{eqnarray*}
&& D \mathcal{P}_{\beta}[V_2]  +V_2(M(x)-b u- c_1 V_2)-\frac{\partial V_2}{\partial t}\\
&\leq &\left( 1- \sigma_1(t-\tau_j -  \varepsilon _1 )\right)(1-\hat \theta_* +\delta )\left( D \mathcal{P}_{\beta}[\hat v_D]  +\hat v_D(M(x)-b (\hat{\theta}_* +{\delta_2\over 2}) \hat u_d- c_1 V_2) \right) \\
&& +  \sigma_1 (1-\hat \theta_* +\delta ) \hat v_D \\
&\leq  &  \left( 1-  O( \sigma_1 \varepsilon _2) \right)(1-\hat \theta_* +\delta )\hat v_D \left(-c_1 {\delta_2\over 2} -c_1\delta  +  c_1 \sigma_1(t-\tau_j -  \varepsilon _1 )(1-\hat \theta_* +\delta )\right) \hat v_D \\
&& +  \sigma_1 (1-\hat \theta_* +\delta ) \hat v_D \\
&<&0
\end{eqnarray*}
provided that $\sigma_1>0$ is sufficiently small and fixed.
Moreover, (\ref{pf-L2-}) indicates that for $\tau_j\geq \tau_{\delta}$,
$$
v(x, \tau_j+\varepsilon_1) \leq V_2(x,\tau_j+\varepsilon_1)\ \ \textrm{in}\ \bar\Omega.
$$
Then, due to comparison principle, we have
\begin{eqnarray*}
 v(x, \tau_j+\varepsilon_1+ \varepsilon_2) &\leq &  V_2(x,\tau_j+\varepsilon_1+ \varepsilon_2)\\
&=& \left( 1- \hat \theta_* - \sigma_1  \varepsilon _2  (1-\hat \theta_* ) +\delta - \sigma_1  \varepsilon _2\delta \right)\hat v_D(x)\\
&\leq &\left( 1- \hat \theta_* - {1\over 2}\sigma_1  \varepsilon _2  (1-\hat \theta_* )   \right)\hat v_D(x)
\end{eqnarray*}
by choosing $\delta \leq  {1\over 2}\sigma_1  \varepsilon _2  (1-\hat \theta_* )$.

In summary, set
$$
\hat \varepsilon = \varepsilon_1 + \varepsilon_2,\ \hat{\delta}= \min \left\{{\delta_2 \over 2},{1\over 2}\sigma_1  \varepsilon _2  (1-\hat \theta_* ) \right\}
$$
and choose $\hat j$ such that for  $j\geq \hat j$, $\tau_j\geq \tau_{\hat\delta}$.
The claim is proved. This contradicts to the definition of $\hat \theta_*$. Therefore,
$\hat\theta_* =\hat\alpha_1$ and thus
\begin{equation}\label{pf-thm13-L2}
\lim_{t\rightarrow \infty }(u(\cdot, t), v(\cdot, t)) =(\hat\alpha_1 \hat u_d, (1-\hat\alpha_1 ) \hat v_D)\ \ \textrm{in}\ L^2(\Omega) \times L^2(\Omega).
\end{equation}

\noindent{\it Step 3'.}  Similar to {\it Step 3} in Section 4, we  improve the $L^2(\Omega)\times L^2(\Omega)-$convergence to $\mathbb X\times \mathbb X-$convergence here. Define
$$
\hat\eta (t) = \inf \{ \eta\ | \ u(x,t)<\eta \hat u_d(x), v(x,t) > (1-\eta ) \hat v_D(x) \ \textrm{in}\ \bar\Omega  \}.
$$
Denote
$\hat\eta ^* = \lim_{t\rightarrow \infty} \hat\eta (t),$ where $\hat\eta(t)$ is decreasing in $t$ due to comparison principle.

Since  $\{   v(\cdot, t)  \ |\  t\geq 0 \}$ is precompact in $ \mathbb X$, it follows immediately from (\ref{pf-thm13-L2}) that
\begin{equation}\label{pf-thm13-Linfty}
\lim_{t\rightarrow \infty }  v(\cdot, t)  =  (1-\hat\alpha_1 ) \hat v_D \ \ \textrm{in}\ \mathbb X .
\end{equation}
Recall that $\hat\theta_* =\hat\alpha_1<1$, hence we have the lower bound for $v(x,t)$ when $t$ is large. Then the arguments after (\ref{dispersal-lowerbd}) in the proof of Proposition \ref{prop-weakL2-nonzero} can borrowed to show that $u(x,t)<\hat u_d(x)$ in $\bar\Omega$ for large time. Therefore, w.l.o.g., we  assume that
$$
0<u_0 <\hat u_d,\ 0<v_0<\hat v_D\ \ \textrm{in}\ \bar\Omega.
$$
Then $0 <\hat\theta(0),\ \hat\eta(0)<1$

According the definitions of $\hat\theta_*$,  $\hat\eta ^*$ and $\hat\alpha_1$, it is obvious that $\hat\theta_* \leq  \hat\alpha_1 \leq \hat\eta ^*$,  $\hat\theta_*\leq 1$ and $\hat\eta^*\geq 0$.  As explained before {\it Step 1} in Section 4, it is easy to verify that
\begin{itemize}
\item {\it if $\hat\theta_* =1$,} then
$
\lim_{t\rightarrow \infty }(u(\cdot, t), v(\cdot, t)) =(  \hat u_d, 0)\ \ \textrm{in}\  \mathbb X\times \mathbb X;
$
\item {\it if $\hat\eta^* =0$},  then
$
\lim_{t\rightarrow \infty }(u(\cdot, t), v(\cdot, t)) =(  0, \hat v_D)\ \ \textrm{in}\   \mathbb X\times \mathbb X.
$
\end{itemize}
It remains to consider the case that $\hat\theta_* < 1$ and $\hat\eta^* > 0$.
To obtain the $\mathbb X\times \mathbb X-$convergence of $(u,v)$, it suffices to show $\hat\theta_* = \hat\eta ^*$.
$\hat\theta_* =\hat\alpha_1$ has been proved.

Suppose that $\hat\alpha_1 < \hat\eta ^*$. Then (\ref{pf-thm13-Linfty}) implies that there exists $T_1>0$ such that for $t\geq T_1$,
$$
v(x,t) > \left( 1- \hat\eta^* + \frac{\hat\eta^* -\hat\alpha_1}{2}\right)\hat v_D(x)\ \ \textrm{in} \ \bar\Omega.
$$
Then for $t\geq T_1$,
\begin{eqnarray*}
u_t  &=& d   \mathcal{K} [u]  +u(m(x)-b_1 u- c v)\\
   &\leq &   \mathcal{K} [u]  +u  \left(m(x)-b_1 u- c\left( 1- \hat\eta^* + \frac{\hat\eta^* -\hat\alpha_1}{2}\right)  \hat v_D(x) \right).
\end{eqnarray*}
Recall that $b_1 \hat u_d =c \hat v_D$. Then it is easy to check that $  \displaystyle \left(\hat\eta^*- \frac{\hat\eta^* -\hat\alpha_1}{2}\right) \hat u_d$ satisfies
$$
\mathcal{K} [u]  +u  \left(m(x)-b_1 u- c\left( 1- \hat\eta^* + \frac{\hat\eta^* -\hat\alpha_1}{2}\right)\hat v_D(x) \right) =0.
$$
Then it follows from Theorem \ref{thm-single} and comparison principle that there exists $T_2\geq T_1>0$ and $0< \tilde\delta<  (\hat\eta^* -\hat\alpha_1) /2 $ such that
$$
u(x, T_2) < ( \hat\eta^* -  \tilde\delta ) \hat u_d(x)\ \ \textrm{in} \ \bar\Omega.
$$
The above two inequalities contradict to the definition of $ \hat\eta^*$. Hence $\hat\alpha_1 = \hat\eta ^*$.

So far, we have proved the convergence of $(u(x,t), v(x,t))$ when $u(x,t)$ does not weakly converge to zero  in $L^2(\Omega)$.

At the end, assume that $u(x,t)$  weakly converges to zero  in $L^2(\Omega)$ as $t\rightarrow \infty$. It follows from Proposition \ref{prop-weakL2-zero} that $u(x,t)<\hat u_d(x)$ in $\bar\Omega$ for large time. W.l.o.g., assume that
$$
0<u_0 <\hat u_d,\  v_0 >0\ \ \textrm{in}\ \bar\Omega.
$$
Thus $\hat\eta (0)>0$.

Suppose that $\hat\eta^*  >0$. Again, by similar arguments in {\it Step 1} in Section 4, we obtain that   there exists a subsequence $\{s_j\}_{j\geq 1} $ with $s_j \rightarrow \infty$ as $j\rightarrow \infty$ and $\hat\alpha_1\in [0,1]$ such that
$$
\lim_{j\rightarrow \infty }(u(\cdot, s_j), v(\cdot, s_j)) =(\hat\alpha_1 \hat u_d, (1-\hat\alpha_1 ) \hat v_D)\ \ \textrm{in}\ L^2(\Omega) \times L^2(\Omega).
$$
This implies that $\hat\alpha_1 =0$ since $u(x,t)$  weakly converges to zero  in $L^2(\Omega)$ as $t\rightarrow \infty$.  Then similar arguments in {\it Step 2} in Section 4 can be applied to indicate that $\hat\eta^* = \hat\alpha_1 $, which is a contradiction.

Therefore, $\hat\eta^* =0$ and thus it follows that
$
\lim_{t\rightarrow \infty }(u(\cdot, t), v(\cdot, t)) =(  0, \hat v_D)\ \ \textrm{in}\ \mathbb X\times \mathbb X.
$
This completes the proof of Theorem \ref{thm-main-mix}.



\section{Other types of nonlocal dispersal strategies}
Theorems \ref{thm-main}, \ref{thm-main-2} and \ref{thm-main-mix}  are about environments with no flux boundary condition.
In this section, we briefly explain how to extend  these results  to nonlocal operators in hostile surroundings  or   periodic environments.

\begin{itemize}
\item {\it Hostile surroundings.}
For $\phi\in \mathbb X$, the nonlocal operator in hostile surroundings is defined as follows:

\noindent\textbf{(D)} $\ \displaystyle\mathcal{K}[\phi] = \int_{\Omega}k(x,y)\phi(y)dy-  \phi(x)$. 

\item {\it Periodic environments.} First set $\mathbb X_p = \{ \phi \in C(\mathbb R^n ) \ | \ \phi(x+ l_j e_j) = \phi(x) ,\ 1\leq j\leq n \},$ where $l_j>0$, $e_j= (e_{j1}, e_{j2}, ..., e_{jn})$ with $e_{ji}=1$ if $j=i$ and $e_{ji} =0$ if $j\neq i$. For $k(x,y): \mathbb R^n \times \mathbb R^n \rightarrow \mathbb R_+$, assume that

\noindent\textbf{(C$_p$)}  $\ k(x+ l_j e_j, y) = k(x,y- l_j e_j),\ 1\leq j\leq n $.

Now, for $\phi\in \mathbb X_p$ and $k(x,y)$ satisfies \textbf{(C1)}, \textbf{(C2)} and \textbf{(C$_p$)}, the nonlocal operator in periodic environments is defined as follows:

\noindent\textbf{(P)} $\ \displaystyle\mathcal{K}[\phi] = \int_{\mathbb R^n}k(x,y)\phi(y)dy-  \phi(x)$.

Denote $\Omega_p = [0,l_1]\times [0,l_2]\times ... \times [0,l_n]$. Then
\begin{eqnarray}\label{nonlocal-periodic}
\mathcal{K}[\phi] &=& \int_{\mathbb R^n}k(x,y)\phi(y)dy-  \phi(x)\cr
 &=& \int_{\Omega_p} \sum_{j=1}^n  \sum_{m=-\infty}^{\infty} k(x, y+ m l_j e_j) \phi(y) dy -  \phi(x).
\end{eqnarray}
\end{itemize}

Recall that when studying nonlocal operators defined in \textbf{(N)}, in fact we consider the operators defined in (\ref{K-kernel}) and (\ref{P-kernel}). Therefore, it is easy to see that Theorems \ref{thm-main} and \ref{thm-main-2} still hold for the system (\ref{original}) with nonlocal operators  in hostile surroundings  or   periodic environments.

At the end, when local dispersals are incorporated, for hostile surroundings, homogeneous Dirichlet boundary conditions should be imposed.  The proof of this case is almost the same as that  of  Theorem \ref{thm-main-mix}. The only different part is in the verification of (\ref{prop-important-general}), where Hopf boundary lemma is needed for Dirichlet boundary conditions.
Moreover, for periodic environments, it is natural to impose periodic boundary conditions when local dispersals are incorporated. Due to (\ref{nonlocal-periodic}), the proof of this case follows from that of Theorem \ref{thm-main-mix} word by word.



\begin{thebibliography}{99999}


\bibitem{Ahmad}
S. Ahmad and A.C. Lazer,
\newblock  Asymptotic behavior of solutions of periodic competition diffusion system,
\newblock\emph{Nonlinear Anal.} \textbf{13} (1989), 263--284.


\bibitem{Alberti1}
G. Alberti and G. Bellettini,
\newblock  A nonlocal anisotropic model for phase transition. Part I: the
Optimal Profile problem,
\newblock\emph{Math. Ann.} \textbf{310} (1998), 527--560.

\bibitem{Alberti2}
G. Alberti and G. Bellettini,
\newblock A nonlocal anisopropic model for phase transition: asymptotic
behaviour of rescaled,
\newblock\emph{European J. Appl. Math.} \textbf{9} (1998), 261--284.


\bibitem{Andreu-J-R-T}
F. Andreu-Vaillo, J.M. Maz\'{o}n, J.D. Rossi, and J.J. Toledo-Melero,
\newblock\emph{Nonlocal diffusion problems. Mathematical Surveys and Monographs, 165.}
 \newblock American Mathematical Society, Providence, RI; Real Sociedad Matem\'{a}tica Espa\~{n}ola, Madrid, 2010.

\bibitem{BaiLi2015}
X. Bai and F. Li,
\newblock  Global dynamics of a competition model with nonlocal dispersal II: The full system,
\newblock\emph{J. Differential Equations} \textbf {258} (2015), 2655--2685.


\bibitem{BZh}
P. Bates and G. Zhao,
\newblock  Existence, uniqueness and stability of the stationary solution to a nonlocal evolution equation arising in population dispersal,
\newblock\emph{J. Math. Anal. Appl.}\textbf{332} (2007),   428--440.


\bibitem{Bere-C-V}
H. Berestycki, J. Coville and H. Vo,
\newblock  Persistence criteria for populations with non-local dispersion,
\newblock\emph{J. Math. Bio.} \textbf{72} (2016), 1693--1745.

\bibitem{Biswas}
I. H. Biswas, E. R. Jakobsen and K. H. Karlsen,
\newblock  Error estimates for finite differencequadrature schemes for fully nonlinear degenerate parabolic integro-PDEs.
\newblock\emph{J. Hyperbolic
Differ. Equ.} \textbf{5} (2008), 187--219.


\bibitem{Bodnar}
M. Bodnar and J. J. L. Vel\'{a}zquez,
\newblock  An integro-differential equation arising as a limit of
individual cell-based models,
\newblock\emph{J. Differential Equations} \textbf{222} (2006), 341--380.



\bibitem{Cain}
M.L. Cain, B.G. Milligan and A.E. Strand,
\newblock  Long-distance seed dispersal in plant populations,
\newblock\emph{Am. J. Bot.} \textbf{87} (2000), 1217--1227.

\bibitem{Clark1}
J.S. Clark,
\newblock  Why trees migrate so fast: confronting theory with dispersal biology and the paleorecord,
\newblock\emph{Am. Nat.} \textbf{152} (1998), 204--224

\bibitem{Clark2}
J.S. Clark, C. Fastie, G. Hurtt, S.T. Jackson, C. Johnson, G.A. King, M. Lewis, J. Lynch, S. Pacala, C. Prentice,
E. Schupp, T. III. Webb and P. Wyckoff,
\newblock  Reid¡¯s paradox of rapid plant migration.
\newblock\emph{BioScience} \textbf{48} (1998), 13--24

\bibitem{CC98}
R. S. Cantrell and C. Cosner,
\newblock  On the effects of spatial heterogeneity on the persistence of interacting species,
\newblock\emph{J. Math. Biol.} \textbf{37} (1998), 103--145.


\bibitem{CC-book}
R.S. Cantrell and C. Cosner,
\newblock\emph{Spatial ecology via reaction-diffusion equations,}
\newblock Wiley Series in Mathematical and Computational Biology. John Wiley and Sons, Ltd., Chichester, 2003.





\bibitem{Coville2010}
J. Coville,
\newblock  On a simple criterion for the existence of a principal eigenfunction of some nonlocal operators,
\newblock\emph{J. Differential Equations} \textbf{249} (2010), 2921--2953.


\bibitem{DeMottoni}
P. DeMottoni,
\newblock Qualitative analysis for some quasilinear parabolic systems,
\newblock\emph{Institute of Math., Polish Academy Sci., zam.} \textbf{190} (1979), 11--79.



\bibitem{DHMP98}
J. Dockery, V. Hutson, K. Mischaikow and M. Pernarowski,
\newblock The evolution of slow dispersal rates: A reaction-diffusion model,
\newblock\emph{J. Math. Biol.} \textbf{37} (1998), 61--83.


\bibitem{Fife}
P.C. Fife,
\newblock An integrodifferential analog of semilinear parabolic PDEs, Partial differential equations
and applications,
\newblock\emph{Lecture Notes in Pure and Appl. Math., Dekker, New York,} \textbf{177} (1996), 137--145.

\bibitem{Fournier}
N. Fournier and P. Lauren\c{c}ot,
\newblock  Well-posedness of Smoluchowski¡¯s coagulation equation for
a class of homogeneous kernels,
\newblock\emph{J. Funct. Anal.} \textbf{233} (2006), 351--379.

\bibitem{Gilboa}
G. Gilboa and S. Osher,
\newblock  Nonlocal linear image regularization and supervised segmentation,
\newblock\emph{UCLA CAM Report} (2006), 06--47.

\bibitem{Grinfeld}
M. Grinfeld, G. Hines, V. Hutson, K. Mischaikow and G.T. Vickers,
\newblock  Non-local dispersal,
\newblock\emph{Differ Integral Equ} \textbf{18} (2005), no. (11), 1299--1320.


\bibitem{HeNi}
X. He and W.-M. Ni,
\newblock Global dynamics of the Lotka-Volterra competition-diffusion system: Diffusion and spatial heterogeneity, I,
\newblock\emph{Communication on Pure and Applied Mathematics} \textbf{69} (2016), 981--1014.


\bibitem{HNShen2012}
G. Hetzer; T. Nguyen and W. Shen,
\newblock  Coexistence and extinction in the Volterra-Lotka competition model with nonlocal dispersal,
\newblock\emph{Commun. Pure Appl. Anal.}  \textbf{11}  (2012), 1699--1722.


\bibitem{HS2006}
M. W. Hirsch and H. L. Smith,
\newblock Asymptotically stable equilibria for monotone semiflows,
\newblock \emph{Discrete Contin. Dyn. Syst.} \textbf{14} (2006) 385--398.

\bibitem{HsuSW1998}
S. B. Hsu, H. L. Smith and P.  Waltman,
\newblock Competitive exclusion and coexistence for competitive systems on ordered Banach spaces,
\newblock\emph{Trans. Amer. Math. Soc.} \textbf{348} (1996), no. 10, 4083--4094.


\bibitem{HMMV}
V. Huston, S. Martinez, K. Miscaikow and G.T. Vichers,
\newblock   The evolution of dispersal,
\newblock \emph{J. Math. Biol.} \textbf{47} (2003), 483--517.

\bibitem{Jakobsen}
E. R. Jakobsen and K. H. Karlsen,
\newblock  Continuous dependence estimates for viscosity solutions of integro-PDEs,
\newblock\emph{J. Differential Equations} \textbf{212} (2005), 278¨C318.

\bibitem{Kan-on-Y}
Y. Kan-on and E. Yanagida,
\newblock Existence of non-constant stable equilibria in competition-diffusion equations,
\newblock\emph{Hiroshima Math. J.} \textbf{23} (1993), 193--221.

\bibitem{KaoLS10}
C.-Y. Kao, Y. Lou and W. Shen,
\newblock Random dispersal vs. non-local dispersal,
\newblock\emph{Discrete Contin. Dyn. Syst. Ser. B} \textbf{26} (2010),  551--596.

\bibitem{KaoLS12}
C.-Y. Kao, Y. Lou and W. Shen,
\newblock Evolution of mixed dispersal in periodic environments,
\newblock\emph{Discrete Contin. Dyn. Syst. Ser. B} \textbf{17} (2012), 2047--2072.


\bibitem{Kindermann}
S. Kindermann, S. Osher and P. W. Jones,
\newblock Deblurring and denoising of images by nonlocal functionals,
\newblock\emph{Multiscale Model. Simul.} \textbf{4} (2005), 1091--1115.


\bibitem{Kishimoto-W}
K. Kishimoto and H.F. Weinberger,
\newblock  The spatial homogeneity of stable equilibria of some reaction-diffusion systems on convex domains,
\newblock\emph{J. Differential Equations} \textbf{58} (1985), 15--21.



\bibitem{LamNi}
K.-Y. Lam and W.-M. Ni,
\newblock Uniqueness and complete dynamics in the heterogeneous competition-diffusion systems,
\newblock\emph{SIAM J. Appl. Math.} \textbf{72} (2012), 1695--1712.




\bibitem{LiCovilleWang}
F. Li,  J. Coville and X. Wang,
\newblock On eigenvalue problems arising from nonlocal diffusion models,
\newblock {\it Preprint.}


\bibitem{LLW14}
F. Li, Y. Lou and Y. Wang,
\newblock   Global dynamics of a competition model with non-local dispersal I: the shadow system,
\newblock \emph{J. Math. Anal. Appl.}, {\bf 412} (2014), 485--497.


\bibitem{LNN16}
F. Li, K. Nakashima and W.-M. Ni,
\newblock  Nonlocal effects in an integro-PDE model from population genetics,
\newblock\emph{Euro. Jnl of Applied Mathematics}, to appear.


\bibitem{LWW11}
F. Li, L. Wang, and Y. Wang,
\newblock On the effects of migration and inter-specific competitions in steady state of some Lotka-Volterra model,
\newblock \emph{Discrete Contin. Dyn. Syst. Ser. B},  {\bf 15} (2011), 669--686.

\bibitem{Lou}
Y. Lou,
\newblock On the effects of migration and spatial heterogeneity on single and multiple species,
\newblock \emph{J. Differential Equations} \textbf{223} (2006), 400--426.



\bibitem{LNS13}
Y. Lou, T. Nagylaki and L. Su,
\newblock  An integro-PDE model from population genetics,
\newblock  \emph{J. Differential Equations} \textbf{254} (2013), 2367--2392.

\bibitem{Lotka}
A.J., Lotka,
\newblock\emph{Elements of Physical Biology,}
\newblock Williams and Wilkins, (1925).


\bibitem{Lutscher}
F. Lutscher, E. Pachepsky and M.A. Lewis,
\newblock The effect of dispersal patterns on stream populations,
\newblock\emph{SIAM Rev} \textbf{47} (2005), no. (4)749--772.


\bibitem{Matano-M}
H. Matano and M. Mimura,
\newblock Pattern formation in competition-diffusion systems in non-convex domains,
\newblock\emph{Publ. RIMS Kyoto Univ.} \textbf{19} (1983),  1049--1079.


\bibitem{Mimura-E-F}
M. Mimura, S.-I. Ei and Q. Fang,
\newblock Effect of domain-shape on coexistence problems in a competition-diffusion system,
\newblock\emph{J. Math. Biol.} \textbf{29} (1991), 219--237.

\bibitem{Mogilner-E}
A. Mogilner and Leah Edelstein-Keshet,
\newblock A non-local model for a swarm,
\newblock\emph{J. Math. Biol.} \textbf{38} (1999), 534--570.

\bibitem{N11}
T. Nagylaki,
\newblock The influence of partial panmixia on neutral models of spatial variation,
\newblock  \emph{Theor. Popul. Biol.} \textbf{79} (2011), 19--38.

\bibitem{N12-81}
T. Nagylaki,
\newblock Clines with partial panmixia,
\newblock  \emph{Theor. Popul. Biol.} \textbf{81} (2012), 45--68.




\bibitem{OL-book}
A. Okubo and S. A. Levin,
\newblock\emph{Diffusion and Ecological Problems: Modern Perspectives},
\newblock Interdisciplinary Applied Mathematics, Vol. 14, 2nd ed. Springer, Berlin, 2001.

\bibitem{PM09}
S.  Petrovskii and A. Morozov,
\newblock Dispersal in a statistically structured population: fat tails revisited,
\newblock\emph{ Am. Nat.} \textbf{173} (2009) 278--289.






\bibitem{schurr}
F.M. Schurr, O. Steinitz, R. Nathan,
\newblock Plant fecundity and seed dispersal in spatially heterogeneous environments: models, mechanisms and estimation,
\newblock\emph{J. Ecol.} \textbf{96} (2008), no. (4), 628--641.



\bibitem{SK97-book}
N. Shigesada and K. Kawasaki,
\newblock\emph{Biological Invasions: Theory and Practice, Oxford Series in Ecology and Evolution,}
\newblock Oxford University Press, Oxford, New York, Tokyo, 1997.



\bibitem{Skellam}
J. G. Skellam,
\newblock Random dispersal in theoretical populations,
\newblock\emph{Biometrika} \textbf{38}, (1951). 196--218.

\bibitem{Turchin}
P. Turchin,
\newblock\emph{Quantitative analysis of movement: measuring and modeling population redistribution in
animals and plants},
\newblock Sinauer Associates, Sunderland (1998).

\bibitem{Volterra}
V. Volterra,
\newblock Variazioni e fluttuazioni del numero d¡¯individui in specie animali conviventi,
\newblock\emph{Mem. Acad. Lincei Roma} \textbf{2} 31-113, (1926)


\end{thebibliography}
\end{document}